\theoremstyle{plain}
 \newtheorem{thm}{Theorem}[section]
 \newtheorem{cor}[thm]{Corollary}
 \newtheorem{lem}[thm]{Lemma}
 \newtheorem{prop}[thm]{Proposition}
 \theoremstyle{definition}
 \newtheorem{defn}[thm]{Definition}
 \theoremstyle{remark}
 \newtheorem{rem}[thm]{Remark}
 \newtheorem{ex}{Example}
 \numberwithin{equation}{section}
\newcommand{\BB}{{\mathbf  B}}
\newcommand{\TT}{{\mathbb T}}
\newcommand{\CC}{{\mathbb C}}
\newcommand{\NN}{{\mathbb N}}
\newcommand{\NNN}{{\mathcal N}}
\newcommand{\DD}{{\mathbb D}}
\newcommand{\CCC}{\mathcal C}
\newcommand{\FF}{\mathcal F}
\newcommand{\FFF}{\mathbb F}
\newcommand{\GG}{\mathcal G}
\newcommand{\GGG}{\mathbb G}
\newcommand{\LL}{\mathcal{L}}
\newcommand{\MM}{\mathcal{M}}
\newcommand{\HH}{{\mathcal H}}
\renewcommand\Re{\, {\text{Re}\,}}
\renewcommand\Im{\, {\text{Im}\,}}
\newcommand{\la}{{\lambda}}
\newcommand{\phii}{{\varphi}}
\def\inpi{\frac{1}{2\pi}\int_0^{2\pi}}
\renewcommand{\span}[1]{\overline{\mathop{\rm span}(#1  )}}
\newcommand{\rank}{\mathop{\rm rank}\nolimits}
\def\<{\langle}
\def\>{\rangle}
\def\tn{\;|\!|\!|\,}
\begin{document}

\title[Kernels of vector-valued Toeplitz operators]{Kernels of vector-valued Toeplitz operators}
\author[Chevrot]{Nicolas Chevrot}
\address{D\'epartement de math\'ematiques et de statistique\\
Universit\'e Laval\\
Qu\'ebec (QC),\\
Canada\\
G1V 0A6 }
\email{nicolas.chevrot.1@ulaval.ca}

\date{june 2009}

\thanks{ The author is very grateful to Pr. Thomas Ransford for his advice and to Pr.  Andreas Hartmann.}
\keywords{Toeplitz operators, de Branges--Rovnyak spaces, vector-valued functions}

\subjclass{Primary: 47B32, 30D55 Secondary: 46C07, 46E40,47B35}

\begin{abstract} 
Let $S$ be the shift operator on the Hardy space $H^2$ and let $S^*$ be its adjoint.
A closed subspace $\FF$ of $H^2$ is said to be nearly $S^*$-invariant if  every element $f\in\FF$ with $f(0)=0$ satisfies $S^*f\in\FF$. In particular, the kernels of Toeplitz operators are nearly $S^*$-invariant subspaces. Hitt  gave the description of these subspaces. They are of the form $\FF=g (H^2\ominus u H^2)$ with  $g\in H^2$ and $u$ inner, $u(0)=0$.
 A very particular fact is that the operator of multiplication by $g$  acts as an isometry on $H^2\ominus uH^2$. Sarason  obtained a characterization of the functions $g$ which act isometrically on $H^2\ominus uH^2$. 
 Hayashi  obtained the link between the symbol $\phii$ of a Toeplitz operator and the functions $g$ and $u$ to ensure that a given subspace $\FF=gK_u$ is the kernel of $T_\phii$.
Chalendar, Chevrot and Partington studied the  nearly $S^*$-invariant subspaces for vector-valued functions. In this paper, we investigate   the generalization of Sarason's  and Hayashi's results in the vector-valued context. 
\end{abstract}

\maketitle

\section{Introduction}

To begin this section, we present  the scalar results of Hitt, Sarason and Hayashi which will be generalized throughout  this paper.  

We denote by $H^2$ the classical Hardy space of analytic functions on the unit disc $\DD$, and by $H^2(\CC^m)$ the $\CC^m$-vector-valued Hardy space consisting of $m$ copies of $H^2$. The shift $S$ is the  operator of multiplication by the variable $z$ and $S^*$ is its adjoint.  The (closed) $S^*$-invariant subspaces of $H^2$ are called model subspaces. They are of the form $K_u=H^2\ominus uH^2$, where $u$ is an inner function. 

 For $\phii\in L^\infty$, the Toeplitz operator  with symbol $\phii$ is defined  by $T_\phii f:=p_+(\phii f)$, where $p_+$ is the orthogonal projection from $L^2$ onto $H^2$.

Hitt \cite{Hitt} introduced the \textit{nearly $S^*$-invariant subspaces}:

\begin{defn}
A closed subspace $\FF$ of $H^2$ is said to be a \textit{nearly $S^*$-invariant subspace} if every element $f\in\FF$ with $f(0)=0$ satisfies $S^*f\in\FF$.
\end{defn}
 In  particular, the kernel of a Toeplitz operator is a nearly $S^*$-invariant subspace.
Hitt obtained the complete description of this spaces:
\begin{thm}[Hitt,  1988]\label{hitt}
Let $\FF$ be a non-trivial nearly $S^*$-invariant subspace.  Let $g$ be the unique unit-norm function in $\FF$, positive at the origin, that is orthogonal to $\FF\cap zH^2$. Then there exists an inner function $u$ vanishing at zero such that,  for all $f\in \FF$, there exists a unique $f_0 \in K_u$ and $f=gf_0$. Furthermore, $\|f\|_2=\|f_0\|_2$. In other words, multiplication by $g$ acts isometrically on $K_u$.
\end{thm}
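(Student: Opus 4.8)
The plan is to realise $\FF$ as $g$ times a backward–shift invariant subspace by building a backward shift intrinsic to $\FF$. First I set $\FF_0:=\FF\cap zH^2$, the kernel of the bounded evaluation functional $f\mapsto f(0)$ on $\FF$, so $\FF_0$ is closed of codimension at most one. Since $\FF\neq\{0\}$, near invariance forces some $f\in\FF$ with $f(0)\neq0$ (otherwise iterating $S^*$ annihilates every element), so $\FF_0$ has codimension exactly one; let $g$ be the unit vector of $\FF\ominus\FF_0$ with $g(0)>0$. Then $g$ is the normalised reproducing kernel of $\FF$ at the origin, $k_0^\FF=g(0)\,g$, and $\langle f,g\rangle=f(0)/g(0)$ for every $f\in\FF$. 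For $f\in\FF$ the function $f-\langle f,g\rangle g$ lies in $\FF_0$, hence vanishes at $0$, and near $S^*$-invariance gives $Tf:=S^*\bigl(f-\langle f,g\rangle g\bigr)\in\FF$. This defines a bounded operator $T$ on $\FF$ with $f=\langle f,g\rangle g+zTf$, an orthogonal decomposition since $zTf=f-\langle f,g\rangle g\in\FF_0\perp g$. Consequently $\|f\|^2=|\langle f,g\rangle|^2+\|Tf\|^2$, so $T$ is a contraction.

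Iterating $f=\langle f,g\rangle g+zTf$ gives $f=g\sum_{k=0}^{n-1}\langle T^kf,g\rangle z^k+z^nT^nf$, and the identity $\|f\|^2=\sum_{k=0}^{n-1}|\langle T^kf,g\rangle|^2+\|T^nf\|^2$ shows $\sum_k|\langle T^kf,g\rangle|^2\le\|f\|^2$. Hence $f_0:=\sum_{k\ge0}\langle T^kf,g\rangle z^k$ belongs to $H^2$ with $\|f_0\|\le\|f\|$. Because $z^nT^nf$ is bounded in $H^2$ and carries a factor $z^n$, it tends to $0$ uniformly on compact subsets of $\DD$, whence $f=gf_0$ as analytic functions. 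Writing $\Phi f:=f_0$, a direct computation gives $\Phi T=S^*\Phi$ and $\Phi g=1$; therefore $\WW:=\Phi(\FF)$ is invariant under $S^*$ and contains the constant $1$.

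The heart of the matter is the isometry of $\Phi$, equivalently $\|T^nf\|\to0$ for every $f$, since then $\|f_0\|^2=\sum_k|\langle T^kf,g\rangle|^2=\|f\|^2$. As a first step, $T$ has no unitary part: if $\MM\neq\{0\}$ reduced $T$ to a unitary, every $y\in\MM$ would satisfy $\|Ty\|=\|y\|$, forcing $\langle y,g\rangle=0$, and since $T^k\MM=\MM$ this gives $\langle T^ky,g\rangle=0$ for all $k$, whence $\Phi y=0$ and $y=g\,\Phi y=0$. More generally, every weak limit point $w$ of the bounded sequence $(T^nf)$ is $0$: since $\sum_n|\langle T^nf,g\rangle|^2<\infty$ we have $\langle T^nf,g\rangle\to0$, and applying the weakly continuous operators $T^k$ along a weakly convergent subsequence yields $\langle T^kw,g\rangle=0$ for all $k$, i.e. $\Phi w=0$ and $w=0$; thus $T^nf\rightharpoonup0$.

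The main obstacle is that weak convergence $T^nf\rightharpoonup0$ together with the fact that $\|T^nf\|^2$ decreases to some $L\ge0$ does not by itself force $L=0$: one must also exclude a nontrivial $C_{1\cdot}$ summand of $T$. Because $T^nf=g\,S^{*n}f_0$ with $S^{*n}f_0\to0$ in $H^2$, the quantity $\|T^nf\|^2=\int_\TT|g|^2\,|S^{*n}f_0|^2\,dm$ need not vanish for a general $g\in H^2$, and controlling it is exactly where the extremal (reproducing–kernel) character of $g$ is indispensable: following Hitt, one shows that $f/g$ stays in the Smirnov class and that the boundary modulus $|g|$ cannot inflate the $L^2$-mass of the tails $S^{*n}f_0\in\WW$, forcing $L=0$. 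Granting the isometry, $\Phi$ is a unitary map of $\FF$ onto the closed $S^*$-invariant subspace $\WW$, which by Beurling's theorem equals $K_u=H^2\ominus uH^2$ for an inner $u$; since $1\in\WW$ we get $u(0)=0$, and the inverse of $\Phi$ is multiplication by $g$. This yields, for each $f\in\FF$, a unique $f_0\in K_u$ with $f=gf_0$ and $\|f\|=\|f_0\|$, as claimed.
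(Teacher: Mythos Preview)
The paper does not prove this theorem: it is quoted as background from Hitt's 1988 paper \cite{Hitt}, so there is no proof in the paper to compare against. Your write-up follows the standard Hitt--Sarason architecture (define the wandering vector $g$, build the intrinsic backward shift $T$, iterate to extract $f_0$, identify $\WW$ via Beurling), and that architecture is correct.

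However, there is a genuine gap at the decisive step. You correctly isolate the isometry of $\Phi$ as equivalent to $\|T^nf\|\to0$, you establish weak convergence $T^nf\rightharpoonup0$, and you observe that this alone does not kill the possible $C_{1\cdot}$ part. But then you write ``following Hitt, one shows that $f/g$ stays in the Smirnov class and that the boundary modulus $|g|$ cannot inflate the $L^2$-mass of the tails $S^{*n}f_0$, forcing $L=0$,'' and immediately afterwards ``Granting the isometry, \dots''. This is not a proof; it is a description of the conclusion you need together with a vague indication of ingredients. The sentence about the Smirnov class and boundary modulus is not an argument: you have not said why $f/g$ is Smirnov, nor exhibited any mechanism by which the extremality of $g$ controls $\int_\TT |g|^2\,|S^{*n}f_0|^2\,dm$. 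The hard content of Hitt's theorem lives precisely here, and you have skipped it.

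To close the gap you must actually carry out one of the known arguments: either Hitt's original analysis of the extremal $g$, or Sarason's de Branges--Rovnyak route (build $b$ from $g$, realise $\FF$ inside $\HH(b)$, and read off the isometry from $(T_{1-b}T_{\bar g})(T_{1-b}T_{\bar g})^*=I-T_bT_{\bar b}$), which is exactly the machinery the present paper develops in Section~\ref{matricialsarason} for the vector case. As written, your proposal proves $\|f_0\|\le\|f\|$ and $f=gf_0$ pointwise, but not the equality of norms, and hence not the isometric action of multiplication by $g$.
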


   Two questions arise.    
\begin{enumerate}
	\item The first one was already posed by Sarason in \cite{sarason:art}  where he made this remark:   "{\itshape The latter theorem leaves  mysterious the relation between the function $g$ and the space $K_u$.  Given a function $g$ of unit norm in $H^2$, what are the $S^*$-invariant subspaces $K_u$ that can arise with $g$ in Hitt's theorem? }"
  \item Which nearly $S^*$-invariant subspaces are kernels of Toeplitz operators. 
\end{enumerate}

 Sarason obtained the following answer to the first question:
 
\begin{thm}[Sarason, 1988]\label{sarason}
Let $g$ be an outer function of unit norm, and $u$ an inner function with $u(0)=0$.
We define two analytic functions on the disc:
\[
f(z):=\dfrac{1}{2\pi} \int_0^{2\pi} \dfrac{e^{i\theta}+z}{e^{i\theta}-z} |g(e^{i\theta})|^2 \ d\theta\ \mbox{ and }\ b(z):=\dfrac{f(z)-1}{f(z)+1}.
\]
Then the following statements are equivalent:
\begin{enumerate}
	\item multiplication by $g$ acts isometrically from $K_u$ to $\FF$;
	\item $bH^2\subset uH^2$ (\textit{i.e.} $b=ub_0$);
	\item $K_u \subset (1-T_bT_{\bar{b}})^{1/2}H^2.$ 
\end{enumerate}
\end{thm}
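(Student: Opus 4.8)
My plan is to establish the cycle of implications by proving the two equivalences $(1)\Leftrightarrow(2)$ and $(2)\Leftrightarrow(3)$ separately, after first extracting the pointwise identity that ties $g$ to $b$. Since $f$ is the Herglotz transform of $|g|^2$, it is analytic with $\Re f>0$ and $f(0)=\inpi|g(e^{i\theta})|^2\,d\theta=\|g\|_2^2=1$; hence $b(0)=0$, $\|b\|_\infty\le1$, and inverting the Cayley transform gives $f=(1+b)/(1-b)$. Taking real parts on the circle yields
\[
|g|^2=\Re f=\frac{1-|b|^2}{|1-b|^2}\qquad\text{a.e. on }\TT .
\]
Consequently $a:=g(1-b)$ obeys $|a|^2+|b|^2=1$ a.e.; as $1-b=2/(1+f)$ is outer (its reciprocal has positive real part) and $g$ is outer, $a$ is outer with $a(0)>0$, so $b$ is a \emph{non-extreme} point of the ball and $\HH(b)$ is the de Branges--Rovnyak space with Pythagorean mate $a$ and $g=a/(1-b)$. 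Subtracting $1$ from the boundary identity also gives the form I shall exploit,
\[
|g|^2-1=\frac{b}{1-b}+\overline{\Big(\frac{b}{1-b}\Big)}\qquad\text{a.e.}
\]

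For $(1)\Leftrightarrow(2)$ I reformulate the isometry $\|gh\|_2=\|h\|_2$ $(h\in K_u)$ as $\inpi(|g|^2-1)|h|^2\,d\theta=0$, i.e. $\Re\inpi\frac{b}{1-b}h\bar h\,d\theta=0$. For the easy direction $(2)\Rightarrow(1)$ I write $b=ub_0$ and use that $h\in K_u$ forces $u\bar h\in zH^2$ (indeed $h\perp uH^2$ says $\bar u h\in\overline{zH^2}$, so $u\bar h\in zH^2$); then
\[
\frac{b}{1-b}\,h\bar h=\frac{b_0h}{1-b}\cdot(u\bar h)
\]
is a product of two analytic functions the second of which vanishes at the origin, so its mean is the value at $0$ of an $H^1$ function, namely $b_0(0)h(0)\cdot 0=0$; thus even the full integral vanishes and $g$ is isometric. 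For $(1)\Rightarrow(2)$ I read the isometry as the statement that the compression of multiplication by $|g|^2$ to $K_u$ is the identity, equivalently that the truncated Toeplitz operator on $K_u$ with symbol $|g|^2-1$ is zero; by Sarason's description of such symbols this forces $|g|^2-1\in uH^2+\overline{uH^2}$, and matching analytic parts (there is no constant ambiguity because $u(0)=0$) gives $b/(1-b)\in uH^2$. Since $1-b$ is outer, the inner factor of $b/(1-b)$ is that of $b$, whence $u$ divides $b$ and $bH^2\subset uH^2$.

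For $(2)\Leftrightarrow(3)$ I pass to reproducing kernels. De Branges complementarity of $\HH(b)$ to $\MM(b)=bH^2$ and of $K_u$ to $\MM(u)=uH^2$ gives, with $k_w(z)=(1-\bar wz)^{-1}$ the Szeg\H{o} kernel, the splittings $k_w=k^{\MM(b)}_w+k^{\HH(b)}_w=k^{\MM(u)}_w+k^{K_u}_w$, where $k^{\HH(b)}_w=(1-T_bT_{\bar b})k_w$ matches the operator in $(3)$. Subtracting, $k^{\HH(b)}_w-k^{K_u}_w=k^{\MM(u)}_w-k^{\MM(b)}_w=\frac{\overline{u(w)}u(z)-\overline{b(w)}b(z)}{1-\bar wz}$, so by Aronszajn's domination principle $K_u\subset\HH(b)$ contractively iff this difference is a positive kernel, i.e. iff $\MM(b)\subset\MM(u)$, i.e. iff $bH^2\subset uH^2$; the positivity in the divisible case follows by factoring $b=uc$ and taking the Schur product of the positive kernels $\overline{u(w)}u(z)$ and $(1-\overline{c(w)}c(z))/(1-\bar wz)$.

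The hard part will be two technical matters. First, $f$ and $b/(1-b)$ lie only in $\bigcap_{p<1}H^p$ and the symbol $|g|^2-1$ need not be square-integrable, so every integral identity and the truncated-Toeplitz argument must be run on a dense class of $h$ (bounded functions, or kernels $k^u_w$) and extended by continuity. Second, statement $(3)$ asserts plain set inclusion while the kernel computation delivers the \emph{contractive} inclusion ($c=1$); the genuine obstacle is to rule out $K_u\subset\HH(b)$ holding with a larger constant while $bH^2\not\subset uH^2$. I expect to remove this gap by arranging the proof as a cycle: $(2)\Rightarrow(3)$ is the easy, contractive (hence set-theoretic) direction above, and for $(3)\Rightarrow(2)$ I would instead derive $(3)\Rightarrow(1)$ from the mate factorization $g=a/(1-b)$---which realizes $g\,K_u$ isometrically inside $\HH(b)$---and then invoke the already-proved $(1)\Rightarrow(2)$, so that no independent comparison of improper inclusions is needed.
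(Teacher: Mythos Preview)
The paper does not reprove Theorem~\ref{sarason} (it is quoted from \cite{sarason:art}); its own contribution is the matricial analogue, Theorem~\ref{theoprincipal}, whose proof in the scalar case yields $(1)\Leftrightarrow(2)$ by a route quite different from yours. The engine there is the operator identity of Theorem~\ref{key},
\[
(T_{1-b}T_{\bar g})(T_{1-b}T_{\bar g})^{*}=1-T_bT_{\bar b},
\]
obtained by matching reproducing kernels (Lemmas~\ref{lem5.2} and~\ref{lem1}). For $(2)\Rightarrow(1)$ one notes $T_{\bar b}h=0$ when $h\in K_u$ and reads off $\|T_gh\|^2=\|T_gT_{1-\bar b}h\|^2=\langle(1-T_bT_{\bar b})h,h\rangle=\|h\|^2$. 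For $(1)\Rightarrow(2)$ one expands the same identity, uses the isometry hypothesis, and then perturbs $h$ first by constants $cv$ (legitimate since $u(0)=0$) and then by $S^{*k}$, forcing $T_{\bar b}h=0$. The whole computation lives in bounded-operator algebra because $T_{1-b}T_{\bar g}$ is bounded even though $T_g$ is not, and this is exactly what your function-theoretic route lacks.

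Your outline has two genuine gaps beyond the technicalities you already flag. In $(2)\Rightarrow(1)$ you assert that $\frac{b_0h}{1-b}\cdot u\bar h\in H^1$ so that its integral is its value at $0$; but $\frac{1}{1-b}=g/a$ lies only in $N^+$, and on $\TT$ one has $\Im\frac{b}{1-b}$ equal to the conjugate function of $\tfrac12(|g|^2-1)\in L^1$, which need not be integrable. Thus the product is in $N^+$ with no $L^1$ boundary control, and the ``mean equals value at $0$'' step is unjustified. In $(1)\Rightarrow(2)$ you appeal to Sarason's characterization $A^u_\phi=0\iff\phi\in uH^2+\overline{uH^2}$; that result is proved for $\phi\in L^2$, while here $\phi=|g|^2-1$ is merely $L^1$, and since the conclusion is a structural statement about the \emph{symbol} it cannot be recovered by testing on a dense class of $h$. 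Finally, your closure $(3)\Rightarrow(1)$ via ``$g=a/(1-b)$ realizes $gK_u$ isometrically inside $\HH(b)$'' is not yet an argument: you need an explicit isometry from $K_u\subset\HH(b)$ into $H^2$ sending $h$ to $gh$, and that is precisely what Lemma~\ref{lem1} and Theorem~\ref{key} supply but your sketch does not.
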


The answer to the second question is given by Hayashi in \cite{hayashi,hayashi85,hayashi90} and Sarason found an alternative proof in \cite{sarasontoeplitz}. This answer is expressed in terms of exposed points of the unit ball of $H^1$, also called \textit{rigid functions}. 

Before stating Hayashi's result, we need some definitions. With the previous notation, let $\FF=g K_u$ be a nearly $S^*$-invariant space and let $b$ be the function associated to $g$ as in Theorem~\ref{sarason}. Because $\log (1-|b|^2)$ is integrable, we can build an outer function $a$ such that $|a|^2+|b|^2=1$ a.e.\ on $\TT$. Then $(b,a)$ is called a \textit{corona pair (or pair)} associated to $g$. Thanks to Theorem~\ref{sarason}, $b=ub_0$. If $\FF$ is the kernel of a Toeplitz operator, then   $(b_0,a)$ is a  corona pair associated to the outer function $g_0:=a/(1-b_0)$. Some pairs, called \textit{special pairs},  verify an additional property which will be precisely defined   in section~\ref{matricialhayashi}.  Admitting this, we can reformulate Hayashi's result as follows (see also \cite{sarasontoeplitz}):

\begin{thm}[Hayashi, 1985]\label{hayashi}
The subspace $\FF=g K_u$ is the kernel of a Toeplitz operator if and only if the pair $(b_0,a)$ is special and $g_0^2$ is rigid.
\end{thm}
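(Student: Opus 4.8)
The plan is to exhibit an explicit unimodular symbol, to show that it annihilates $\FF=gK_u$ with no extra hypotheses, and then to use rigidity and the special condition only to prevent the kernel from overshooting. Since the extremal function of a nonzero Toeplitz kernel is outer, I would first record that for a kernel $\FF=gK_u$ the function $g$ is outer, so that Sarason's construction (Theorem~\ref{sarason}) applies verbatim: $g=a/(1-b)$ with $b=ub_0$, and $g_0=a/(1-b_0)$ is the outer function attached to the reduced pair $(b_0,a)$; note also $a=g_0(1-b_0)$, hence $g=g_0(1-b_0)/(1-ub_0)$. The natural candidate symbol is the unimodular function $\phii=\bar z\,\bar u\,\overline g/g$.

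The first step is the inclusion $gK_u\subseteq\ker T_\phii$, which in fact holds for \emph{any} isometric multiplier $g$ on $K_u$ and needs neither rigidity nor the special hypothesis. For $f_0\in K_u$ let $Cf_0=\bar z\,u\,\overline{f_0}$ be the natural conjugation, which maps $K_u$ into itself; then $u\,\overline{f_0}=z\,Cf_0\in zH^2$, and since multiplication by $g$ sends $K_u$ into $H^2$ one computes $\overline{\phii\,gf_0}=z\,u\,g\,\overline{f_0}=z^2\,g\,(Cf_0)\in zH^2$. Hence $\phii\,gf_0\in\overline{zH^2}=(H^2)^\perp$ and $T_\phii(gf_0)=0$. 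Thus every such $\FF$ sits inside a Toeplitz kernel, and the whole content of the theorem is the reverse inclusion $\ker T_\phii\subseteq gK_u$, i.e.\ that the kernel does not overshoot.

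For the sufficiency I would reduce this reverse inclusion to the \emph{reduced} symbol $\phii_0=\bar z\,\overline{g_0}/g_0$. A short computation gives $\phii=\phii_0\,W$ with $W$ a unimodular factor built from $b_0$ and $u$, and the special hypothesis on $(b_0,a)$ (Definition in Section~\ref{matricialhayashi}) is exactly what forces $W$ to carry the inner factor $u$ and nothing spurious, so that Hitt's decomposition (Theorem~\ref{hitt}) of $\ker T_\phii$ returns the \emph{same} model space $K_u$ and the \emph{same} outer extremal function $g$. Dividing out the isometric multiplier $g$, the question collapses to whether the reduced kernel is as small as possible, and this is governed by $\ker T_{\phii_0}$: since $g_0$ is outer one always has $g_0\in\ker T_{\bar z\,\overline{g_0}/g_0}$, while the classical rigidity criterion states that $g_0^2$ is rigid if and only if $T_{\overline{g_0}/g_0}$ has trivial kernel, equivalently $\ker T_{\bar z\,\overline{g_0}/g_0}=\CC g_0$. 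Rigidity therefore blocks any extra element, and the two inclusions give $\ker T_\phii=gK_u$. For the necessity I would start from $\FF=\ker T_\psi$, use the standard reduction making $g$ outer and normalising $\psi$ to the unimodular form $\bar z\,\bar u\,\overline g/g$, and then read off both conditions: the absence of overshoot forces the reduced kernel to collapse to $\CC g_0$, which is rigidity of $g_0^2$, while the consistency of the normalised symbol with the pair attached to $g$ forces the reconstruction of $(b_0,a)$ from $g_0=a/(1-b_0)$ to close up, which is the special condition.

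The main obstacle is the translation of the two analytic hypotheses into the exact operator-theoretic statement $\ker T_\phii=gK_u$. The rigidity side rests on the equivalence between $g_0^2$ being an exposed point of the unit ball of $H^1$ and the vanishing of $\ker T_{\overline{g_0}/g_0}$; the special side rests on a delicate bookkeeping of inner factors and of the normalising constants (note that $b_0(0)\neq 0$ in general) in passing between the pairs $(b,a)$ and $(b_0,a)$. Getting the unimodular factor $W$ and Hitt's extremal function to match on the nose — so that no hidden inner factor inflates $u$ and no failure of rigidity enlarges the reduced kernel — is where the real work lies, and it is precisely this matching that the notion of a special pair is designed to encode.
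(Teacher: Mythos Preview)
Your outline identifies a natural unimodular symbol and correctly proves the easy inclusion $gK_u\subseteq\ker T_\phii$ via the conjugation on $K_u$. (A minor slip: since $u(0)=0$ already, the correct symbol is $\bar u\,\bar g/g$, not $\bar z\,\bar u\,\bar g/g$; your own computation lands in $z^2H^2$ rather than $zH^2$, a symptom of the superfluous $\bar z$, and the extra factor genuinely enlarges the kernel you must later control.) The hard direction, however, is not proved. You assert that the special condition ``is exactly what forces $W$ to carry the inner factor $u$ and nothing spurious,'' and then concede that ``getting the unimodular factor $W$ and Hitt's extremal function to match on the nose\dots\ is where the real work lies.'' That work is absent. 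The special condition on $(b_0,a)$ is by definition the absolute continuity of the Herglotz measure attached to $b_0$, equivalently the surjectivity of $T_{1-b_0}T_{\bar g_0}:H^2\to\HH(b_0)$; nothing in your sketch connects this measure-theoretic statement to the factorisation $\phii=\phii_0W$ or to the claim that Hitt's decomposition of $\ker T_\phii$ returns the pair $(g,u)$. As written the argument is circular: you invoke Hitt on $\ker T_\phii$ and then assume its extremal data coincide with $(g,u)$ in order to reduce to $\phii_0$.

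The paper's proof (Sarason's alternative to Hayashi's original) supplies exactly the mechanism you are missing, and it does so by a different route that avoids symbol factorisation altogether. It works through the de Branges--Rovnyak space $\HH(b)$ and the isometry $V=T_{1-b}T_{\bar g}:H^2\to\HH(b)$ built in Section~\ref{matricialsarason}. On one side, $\FF=\ker T_{\bar g\bar u/g}$ is equivalent to the orthogonal decomposition $H^2=\FF\oplus\overline{T_{ug/\bar g}\,H^2}$; on the other, once $b=ub_0$ one always has $\HH(b)=K_u\oplus u\HH(b_0)$. A direct computation (Lemma~\ref{lem3}) shows that $V$ maps the range of $T_{ug/\bar g}$ into $u\cdot aH^2$, while $V$ acts on $\FF=gK_u$ as division by $g$ and hence maps it onto $K_u$. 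Comparing the two decompositions through the isometry $V$, one sees that $\FF=\ker T_{\bar g\bar u/g}$ holds if and only if $aH^2$ is dense in $\HH(b_0)$. Proposition~\ref{prop6} then identifies this density with the conjunction ``$(b_0,a)$ special and $g_0^2$ rigid.'' Thus the special hypothesis enters not as a constraint on an unspecified unimodular factor $W$, but as precisely the surjectivity needed for $aH^2$ to fill out $\HH(b_0)$; this is the idea your sketch is missing.
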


We would like to generalize the previous theorems to vector-valued functions. The paper is organized as follows. In section~\ref{Hardy}, we define the vector- or matrix-valued objects: we recall the inner-outer matricial factorization, we comment on the generalization of Theorem~\ref{hitt}, and we  recall the  definition of  de Branges--Rovnyak spaces, the vector-valued analogue of $\HH(b):=(1-T_bT_{\bar{b}})^{1/2}H^2$  appearing in Theorem \ref{sarason}.   

In  section~\ref{matricialsarason}, we transcribe Sarason's approach to the vectorial case. We build the analogue of the functions $b$ and $u$.  Thanks to de Branges--Rovnyak spaces, we obtain the matricial version of Theorem~\ref{sarason}.
    The matrices do not commute, so we need to modify the original scalar proof given by Sarason. An example illustrates this kind of problem.

In section~\ref{kernel}, we would like to describe the kernels of Toeplitz operators.  We begin with some examples. This allows us to illustrate the difficulties due to the dimension, and to establish some notation.   We then investigate the descriptions of kernels of Toeplitz operators of finite dimension. 

Finally, in section~\ref{matricialhayashi}, we obtain the full description of the kernels of Toeplitz operators. We establish the desired generalization of Hayashi's Theorem.

\section{Hardy spaces of vector-valued functions}\label{Hardy}

\subsection{Inner-outer factorization}\label{innerouter}

As usual with Hardy spaces, we identify a function with its radial limits. 

Let $\FFF,\GGG$ be two subspaces of $\CC^m$ of   dimension $r$. Nikolskii, in \cite{Nik1} page 14, calls  $\Theta\in H^\infty(\FFF\to \GGG)$ an inner function  if its boundary values $\Theta(\xi)$ are surjective isometries for a.e.\ $\xi \in \TT$. 

It will be more convenient to say that $\Theta\in H^\infty(\CC^m\to \CC^m)$ is an \textit{inner function}  if its boundary values $\Theta(\xi)$ are partial isometries for a.e.\ $\xi \in \TT$, with   kernel and range independent of $\xi$ a.e.\ in $\TT$.  In other words, an inner function is a square-matrix-valued function such that
 there exist two subspaces  $\FFF,\GGG$  of $\CC^m$ with the same dimension $r$ for which $\Theta|_\FFF \in H^\infty(\FFF\to \GGG)$ is an inner function in the sense of Nikolskii.  The rank of $\Theta(\xi)$ is  equal to $r$ for a.e.\ $\xi \in\TT$.

Here are two examples of inner functions of rank 2. The first one will be discussed later (see Theorem~\ref{garcia}). Let $\theta$ be an inner scalar function and $a,b\in K_{z\theta}$ verifying $|a|^2+|b|^2=1$ a.e.\ on $\TT$. Define $\phii\in H^\infty(\CC^2\to\CC^2)$ and $\Theta \in H^\infty(\CC^3\to\CC^3)$  by the following formulae:
\[\FFF= \begin{pmatrix}\CC \\  \CC \\0 \end{pmatrix}, \ \ \GGG:=   \begin{pmatrix}\CC \\  0 \\ \CC \end{pmatrix}, \ \
\phii:=
\begin{pmatrix}
a & -b \\
\theta \bar{b} & \theta \bar{a}
\end{pmatrix}
\ \mbox{ and }\ \Theta:=
\begin{pmatrix}
a &0 & -b  \\
\theta \bar{b}&0 & \theta \bar{a} \\
0& 0& 0
\end{pmatrix}.
\]
 Both $\phii,\Theta$ are inner of rank 2. Note that $\phii\in H^\infty(\CC^m\to \CC^m)$ is inner of rank $m$ if and only if $\det \phii$ is inner.

 Recall the Beurling--Lax Theorem \cite{Lax}~:
If a closed subspace $\MM\subset H^2(\CC^m)$ is invariant by the shift, then there exists an inner function $\Theta$ such that $\MM=\Theta H^2(\CC^m)$. This description is unique up to multiplication by an unitary matrix. 

Next, we recall the notion of outer vector-valued function. The outer scalar functions are  cyclic vectors for the shift. For $g\in H^2(\CC^m)$, we define $\GG$, the smallest $S-$invariant subspace containing $g$, by $\GG:=\span{S^k g : k\in \NN}$. Thanks to Beurling--Lax theorem, there exists $\Theta$, inner with of 1,  such that $\GG=\Theta H^2(\CC^m)$. We  say that $g$ is \textit{outer} if $\Theta$ is a constant matrix. Then $\GG=H^2(\Theta(0)\CC^m)$. It will be useful to write $\GGG:=\Theta(0)\CC^m$. Finally, the function $g$ is a cyclic vector for $S$ in $H^2(\GGG).$   

We extend this  construction to define the outer matrix-valued functions.
Let $g_1,\dots ,g_r\in H^2(\CC^m)$, with $r\leq m$, be  a  independent family of vector-valued functions. Let $G\in H^2(\CC^r\to \CC^m)$ be the rectangular matrix-valued functions where the columns are $(g_\ell)_{\ell\leq r}$. In this case we write $G=[g_1,\dots ,g_r]$. 
  It is said to be \textit{outer} if $\GG:=\span{S^k g_\ell : \ell\leq r, k\in \NN}=\Theta H^2(\CC^m)$, where $\Theta$ is a constant partial isometry of rank $r$. Then, we will write $\GGG=\Theta(0)\CC^m $, $\dim \GGG=r$, and  $\GG=H^2(\GGG)$. Due to the rank theorem, there exists an unitary mapping  $\Theta_0:\CC^r\to\GGG$.  To $G$, we associate $\tilde{G}\in H^2(\CC^r\to \CC^r)$   such that $ G:=\Theta_0 \tilde{G}$.   This allows us to translate the properties of square-matrix-valued functions to rectangular ones.

  For more details about inner-outer factorization of square matrix-valued functions with determinant different from zero, see \cite{katsnelson}. In particular the Definition 5.3 in \cite{katsnelson} of \textit{Beurling left outer function} coincides with that of outer given above. The Smirnov--Nevanlinna class $\NNN^+( \CC^m\to \CC^m)$ of square matrix-valued functions is the set of all matrices with entries in the scalar Smirnov--Nevanlinna class.  The Definition 3.1 in \cite{katsnelson} of outer function  in $\NNN^+( \CC^m\to \CC^m)$ is that $E$ is outer if $\det \ E$ is outer in $\NNN^+ $. The authors shows that all  definitions of outer functions are equivalent in $H^2(\CC^m\to \CC^m)$.  
  Theorem 5.4 in \cite{katsnelson} says that, given a function $F$ in $\NNN^+( \CC^m\to \CC^m)$, $\det F(z) \not\equiv 0$, there exist functions $F_i$ inner and $F_o$ outer (resp. $F_i',F_o'$) , unique up to a unitary matrix, such that  $F=F_i F_o$  (resp. $F= F_o'F_i'$).  Furthermore,  Theorem 3.1 of \cite{katsnelson}  will be useful later:    
  Let $E\in \NNN^+(\CC^m\to \CC^m)$ an outer square-matrix-valued function. Then $\det(z)\not=0$ for all $z\in\DD$ and $E^{-1}\in \NNN^+(\CC^m\to \CC^m)$.

 \subsection{Nearly $S^*$-invariant subspaces of $H^2(\CC^m)$}
 The next result is the description of the nearly $S^*$-invariant subspaces of $H^2(\CC^m)$. For more details, see \cite{nearly}.
\begin{thm}\label{descriptionnearly}
Let $\FF\subset H^2(\CC^m)$ be a non-trivial nearly $S^*$-invariant subspace. Let $(g_1,\dots, g_r)$ be a orthonormal basis of 
	\[ 
	W:= \FF\cap \left\{ \FF  \cap zH^2(\CC^m) \right\}^\perp.
	\]
Then   $r:=\dim\, W\leq m$ and there exist   an integer $r'$, $1\leq r'\leq r$, and $U \in H^\infty(\CC^r\to \CC^r) $ inner, $\rank \, U=r' $, such that 
\[
\FF=\left[g_1,\dots , g_r \right]\left( H^2(\CC^r)\ominus U H^2(\CC^r) \right)=G K_U.
\]
  For all $f\in\FF$, there exists an unique $f_0\in K_U$ such that $f=Gf_0$. Furthermore, $ \left\| f_0  \right\|_{H^2(\CC^r)} =\| f \|_{H^2(\CC^m)}.$
\end{thm}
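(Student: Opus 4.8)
The plan is to reproduce, in the matricial setting, Hitt's division algorithm: peel off the component of $f$ lying in $W$, divide the remainder by $z$ using near $S^*$-invariance, and iterate, assembling the successive quotients into a single function $f_0\in H^2(\CC^r)$. Two elementary facts come first. The evaluation map $W\to\CC^m$, $f\mapsto f(0)$, is injective: if $f\in W$ and $f(0)=0$ then $f\in\FF\cap zH^2(\CC^m)$, while $f\in W=\FF\cap(\FF\cap zH^2(\CC^m))^\perp$ forces $f\perp f$, so $f=0$; hence $r=\dim W\le m$. Moreover $\FF\cap zH^2(\CC^m)$ is closed, giving the orthogonal decomposition $\FF=W\oplus(\FF\cap zH^2(\CC^m))$; I write $P_W$ for the orthogonal projection of $\FF$ onto $W$.

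For the iteration, put $h_0:=f$ and, for $k\ge0$, let $c_k\in\CC^r$ be the coordinate vector of $P_W h_k=Gc_k$ in the orthonormal basis $(g_\ell)$, and set $h_{k+1}:=S^*(h_k-Gc_k)$. Since $h_k-Gc_k=(I-P_W)h_k\in\FF\cap zH^2(\CC^m)$ vanishes at the origin, near $S^*$-invariance yields $h_{k+1}\in\FF$, so the recursion is legitimate and $h_k=Gc_k+zh_{k+1}$. The orthogonal splitting gives the Pythagorean identity
\[
\|h_k\|^2=\|Gc_k\|^2+\|h_{k+1}\|^2=|c_k|^2+\|h_{k+1}\|^2,
\]
the last equality because the $g_\ell$ are orthonormal. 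Telescoping forces $\sum_{k}|c_k|^2\le\|f\|^2$, so $f_0:=\sum_{k\ge0}c_kz^k$ lies in $H^2(\CC^r)$; and since $f=\sum_{j=0}^{k}z^j(Gc_j)+z^{k+1}h_{k+1}$ with the last summand in $z^{k+1}H^2(\CC^m)$, comparing Taylor coefficients as $k\to\infty$ shows $f=Gf_0$.

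The heart of the matter is the isometry $\|f_0\|=\|f\|$, equivalently $\|h_k\|\to0$. Telescoping only gives $\|h_k\|^2\to L^2:=\|f\|^2-\|f_0\|^2\ge0$, and I expect this to be the main obstacle: the difficulty is genuine because $G$ has merely $H^2$ entries, so multiplication by $G$ is not a priori continuous, and contractivity of the division map produces only the inequality $\|f_0\|\le\|f\|$, never its reverse. I would rewrite the tail as $h_k=G\,S^{*k}f_0$ and regard $h_k=T^kf$ for the contraction $T:=S^*(I-P_W)$ on $\FF$, whose norm satisfies $\|Tf\|=\|(I-P_W)f\|$ since $S^*$ is isometric on $zH^2(\CC^m)$. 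Proving the strong stability $T^k\to0$—that is, that $G$ multiplies the quotients isometrically—is exactly the technical core of Hitt's theorem, and I do not expect it to follow from soft arguments alone: the weak convergence $h_k\rightharpoonup0$ (obtained by testing $G\,S^{*k}f_0$ against polynomials, using $S^{*k}f_0\to0$ in $H^2(\CC^r)$) is not by itself enough, and the a.e.\ convergence $S^{*k}f_0\to0$ available from the Carleson--Hunt theorem still lacks an $L^1$-domination, so this step needs genuine care. Granting $L=0$, uniqueness of $f_0$ is immediate: if $Gf_0=Gf_0'$ with $f_0,f_0'$ both quotients, then $0=\|G(f_0-f_0')\|=\|f_0-f_0'\|$.

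Finally I would identify $N:=\{f_0:f\in\FF\}$ as a model space. The division map $f\mapsto f_0$ is a linear isometry of $\FF$ onto $N$, so $N$ is a closed subspace of $H^2(\CC^r)$. It is $S^*$-invariant: applying the algorithm to $h_1=S^*(f-Gc_0)\in\FF$ reproduces the shifted coefficient sequence $(c_{k+1})_{k\ge0}$, whose generating function is $S^*f_0$, whence $S^*f_0\in N$. Therefore $N^\perp$ is shift-invariant, and the Beurling--Lax theorem furnishes an inner $U\in H^\infty(\CC^r\to\CC^r)$ with $N^\perp=UH^2(\CC^r)$, i.e.\ $N=H^2(\CC^r)\ominus UH^2(\CC^r)=K_U$; setting $r':=\rank U$ gives $1\le r'\le r$, the lower bound coming from the non-triviality of $\FF$. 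Combining the pieces, every $f\in\FF$ is $Gf_0$ for a unique $f_0\in K_U$ with $\|f_0\|=\|f\|$, while conversely $GK_U=GN\subseteq\FF$, which is the asserted equality $\FF=GK_U$.
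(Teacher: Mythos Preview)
The paper does not give its own proof of this theorem: it is quoted from the reference \cite{nearly} (Chalendar--Chevrot--Partington), and the remark following Theorem~\ref{theoprincipal} merely observes that Sarason's de Branges--Rovnyak approach could furnish an alternative route. So there is no in-paper argument to compare against; your outline is exactly Hitt's division algorithm transported to $\CC^m$-valued functions, which is indeed the method of the cited reference.

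Your self-diagnosed gap is the real one, and you have located it precisely: everything else---the bound $r\le m$ via injectivity of evaluation on $W$, the recursion $h_{k+1}=S^*(h_k-Gc_k)$, the Pythagorean telescoping giving $\|f_0\|\le\|f\|$, the identification $f=Gf_0$ by matching Taylor coefficients, the $S^*$-invariance of $N$, and the Beurling--Lax step---is correct as written. What is missing is exactly the strong stability $\|h_k\|\to 0$, and you are right that neither the weak convergence $h_k\rightharpoonup 0$ (which you obtain correctly from $h_k(\lambda)=G(\lambda)(S^{*k}f_0)(\lambda)\to 0$) nor dominated-convergence heuristics suffices. This step is genuinely the technical heart of Hitt's theorem and of its vector-valued extension; it is supplied in \cite{nearly} and cannot be finessed by soft arguments. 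Granting it, your deduction of uniqueness and of $\FF=GK_U$ is clean. One small caveat, unrelated to your reasoning: the lower bound $r'\ge 1$ in the statement fails for reducing subspaces such as $\FF=H^2(E)$ with $E\subsetneq\CC^m$, where the algorithm returns $N=H^2(\CC^r)$ and hence $U=0$; your appeal to ``non-triviality of $\FF$'' does not rescue this, but the issue lies in the statement rather than in your argument.
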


  Because the columns of $G$ form an orthonormal basis of $W$, the norm of $G\in H^2(\CC^r\to\CC^m)$ is 1. For any  $h\in H^2(\CC^r)$, we define $T_G h$ to be  the Fourier projection  of the $L^1(\CC^m)$ function $Gh$ on $H^2(\CC^m)$. It is an unbounded operator, but, as in the scalar case, it is an isometry on $K_U$.
   

\subsection{De Branges--Rovnyak spaces}

Now, we will recall the definition and the main properties of de Branges--Rovnyak spaces. For more details, see the first chapter of \cite{sarason:book}. Let $H_1$ and $H$ be two Hilbert spaces and $\BB\in \LL(H_1,H)$ be a bounded operator. We define $\MM(\BB)$ to be the range space $\BB H_1$ with the inner product that makes $\BB$ be a coisometry on $H$:  
\[
\forall f,g\in H_1\cap (\ker  \BB)^\perp, \ \  \< Bf,Bg\>_{\MM(\BB)}:=\<f,g\>_{H_1}.
\]
For a contraction $\BB$, the inclusion is a contraction from $\MM(\BB)$ to $H.$
The complementary space $\HH(\BB)$ is defined to be $\MM\left((Id_H-\BB \BB^*)^{1/2}\right)$. In the particular case where $\BB$ is the multiplication by an inner function $B$, then $\MM(\BB)=BH^2(\CC^m)$ and $\HH(\BB)=K_B$. In this case, the inner products of $\MM(\BB)$ and  $\HH(\BB)$  coincide with the $H^2$ inner product and  these two spaces are really complementary spaces in the $H^2$ sense. In this article, $H$ and $H_1$ will be Hardy spaces like $H^2(\CC^m)$  or   closed subspaces of $H^2(\CC^m)$ isometrically equivalent to $H^2(\CC^r)$, and $\BB$ will be the multiplication by a matrix $B$ in the unit ball of $H^\infty(\CC^r\to \CC^m)$. 

The reproducing kernels in  $H^2(\CC^m)$ are  $k_{\la}u:=\frac{1}{1-\bar{\la}z}u$ for $\la\in\DD$ and $u\in \CC^m$. Thus, for all $f\in H^2(\CC^m)$, the reproducing kernels verify  \[ \<f,k_{\la}u\>_2=\<f(\la),u\>_{\CC^m}.\] Because the inclusion from $\HH(B)$ to $H^2$ is contractive, de Branges--Rovnyak spaces have kernel functions, and a simple calculation shows that 
\[
k_{\la}^B u:=\dfrac{Id_r-B(z)B(\la)^*}{1-\bar{\la}z}u \ \ \mbox{ and }\ \<f,k_\la^B u\>_{\HH(B)}=\<f(\la),u\>_{\CC^m}.
\]
Given a symbol $B$, we write $\MM(B)$ (resp. $\HH(B)$) instead of $\MM(T_B)$ (resp. $\HH(T_B)$).

\section{Toeplitz operators acting as an isometry on a model space}\label{matricialsarason}
In this section, we verify that the tools used by Sarason \cite{sarason:art} can be applied to matrix-valued functions. 

\subsection{A matricial intertwining}

Let  $(g_\ell)_{\ell\leq r}$ be an orthogonal basis of $W$ and let $G\in H^2(\CC^r\to \CC^m)$ be the matrix-valued function $[g_1,\dots,g_r]$. 

We denote by $H^2(\CC^m,\mu_G)$ the Hardy space of vector-valued functions with  the norm 
\[
\|q\|^2_{H^2(\CC^m,\mu_G)}:=\inpi \| G(e^{i\theta}) q\|^2_{\CC^m} \ d\theta.
\]
 Remember that $\GG=\span{S^k g_\ell\ :\ \ell\leq r,k\geq 0}$. Let $f=Gq$ be in $H^2(\CC^m)$. This forces $q$ to be in $H^2(\CC^m,\mu_G)$.  The measure $\mu_G$ will play a role in section~\ref{matricialhayashi}.
 
  We would like to build from $G$ the functions $F$ and $B$, the analogues of those appearing in Theorem~\ref{sarason}. After, we will show that $T_{Id_r-B}T_{G^*}$ is an coisometry from $\GG$ to $\HH(B)$, or equivalently, $q\mapsto (T_{Id_r-B}T_{G^*}G)q$ is an coisometry from $H^2(\CC^m,\mu_G)$ to $\HH(B)$. As a consequence, we will obtain the following equality, the key of the proof of the generalization of Theorem~\ref{sarason}~:
\[
Id_r -T_{B}T_{B^*}=\left(T_{Id_r-B}T_{G^*}\right)\left(T_{Id_r-B}T_{G^*}\right)^*.
\] 
 We begin by defining $F$ the analytic function on the disc by 
$$
\forall z\in \DD, \ \ F(z):=\inpi \frac{e^{i\theta}+z}{e^{i \theta}-z} G(e^{i\theta})^* G(e^{i\theta}) d\theta.
$$
Note that, if $G=U G'$, where $U$ is inner of rank $m$, then the functions   $F$  and $F'$ are the same.
Because the $(g_k)_{k\leq r}$ form an orthogonal basis of $W$, the coefficient $F(0)_{i,j}$ is $\< g_i , g_j \>_{H^2}$. So, $F(0)=Id_r$.
 For $z_0\in \DD$, let $u\in \CC^r$ be an eigenvector of the matrix $F(z_0)$. Then $\Re( \< F(z_0) u, u \> )$ is a Poisson integral, so  the real parts of the eigenvalues of $F(z_0)$ are $\|G(z_0)u\|^2\geq 0$. This implies that the moduli of the eigenvalues of $F(z_0)+Id_r$ are greater than 1, so $F(z_0)+Id_r$ is invertible.

Next, we define $B$, the matrix-valued Herglotz integral of $\mu_G$, by
$$
B(z):=(F(z)+Id_r)^{-1}(F(z)-Id_r).
$$
Because $F(0)=Id_r$, the function $B$ vanishes in zero. For all $u\in\CC^r$, \[ \|( F(z) \pm Id_r)u\|^2=\| F(z) u \|^2+\| u\|^2\pm 2 \Re \< F(z)u,u\>. \] Then, because $\Re \< F(z)u,u\> \geq0,$ 
\[ \|( F(z) +Id_r)u\|^2\geq \|( F(z) -Id_r)u\|^2\] and  $B$ lies in the unit ball of $H^\infty(\CC^r \to \CC^r)$. We can therefore consider $\HH(B)$.

\begin{lem}\label{lem5.2}
For all  $u,v\in \CC^m$ we have:
 \[ \<G k_w u,G k_z v\>_{H^2}=\<k_w^B (Id_r-B(w)^*)^{-1}u, k_z^B (Id_r-B(z)^*)^{-1}v\>_{\HH(B)}.\]
\end{lem}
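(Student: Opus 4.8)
The plan is to compute both sides explicitly and to reduce the asserted equality to a single matrix identity relating $F$ and $B$. Since $w\in\DD$, the scalar factor $1/(1-\bar wz)$ is bounded on $\clos\DD$, so $Gk_wu$ and $Gk_zv$ are genuine elements of $H^2(\CC^m)$ and their inner product can be evaluated on the boundary. Writing $\Phi:=G^*G$, a nonnegative matrix-valued $L^1$ function with $\inpi\Phi\,d\theta=F(0)=Id_r$, the left-hand side becomes $\<I(w,z)u,v\>_{\CC^r}$, where
\[
I(w,z):=\inpi\frac{\Phi(e^{i\theta})}{(1-\bar we^{i\theta})(1-ze^{-i\theta})}\,d\theta.
\]

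First I would evaluate $I(w,z)$ by means of the scalar partial-fraction identity
\[
\frac{1}{(1-\bar w\zeta)(1-z\bar\zeta)}=\frac{1}{1-\bar wz}\left(\frac{1}{1-\bar w\zeta}+\frac{z\bar\zeta}{1-z\bar\zeta}\right),\qquad \zeta=e^{i\theta},
\]
valid because $\zeta\bar\zeta=1$. The first summand equals $\tfrac12+\tfrac12\frac{1+\bar w\zeta}{1-\bar w\zeta}$, and since $\frac{1+\bar w\zeta}{1-\bar w\zeta}$ is the conjugate of the Herglotz kernel of $\mu_G$ at $w$ and $\Phi$ is Hermitian, its integral against $\Phi$ is $\tfrac12(F(w)^*+Id_r)$; the second summand equals $\tfrac12\bigl(\frac{1+z\bar\zeta}{1-z\bar\zeta}-1\bigr)$, whose integral against $\Phi$ is $\tfrac12(F(z)-Id_r)$. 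Adding the two gives the compact form $I(w,z)=\dfrac{F(z)+F(w)^*}{2(1-\bar wz)}$.

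Next I would treat the right-hand side. By the reproducing property of $\HH(B)$ recalled above, $\<k_w^B\alpha,k_z^B\beta\>_{\HH(B)}=\bigl\<\frac{Id_r-B(z)B(w)^*}{1-\bar wz}\alpha,\beta\bigr\>_{\CC^r}$; inserting $\alpha=(Id_r-B(w)^*)^{-1}u$ and $\beta=(Id_r-B(z)^*)^{-1}v$ and moving the factor $(Id_r-B(z)^*)^{-1}$ across the inner product as its adjoint $(Id_r-B(z))^{-1}$, the right-hand side equals $\<R(w,z)u,v\>_{\CC^r}$ with $R(w,z)=(Id_r-B(z))^{-1}\frac{Id_r-B(z)B(w)^*}{1-\bar wz}(Id_r-B(w)^*)^{-1}$. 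It therefore remains to prove the matrix identity $\tfrac12(F(z)+F(w)^*)=(Id_r-B(z))^{-1}(Id_r-B(z)B(w)^*)(Id_r-B(w)^*)^{-1}$. For this I would use the Cayley-type relations coming from $B=(F+Id_r)^{-1}(F-Id_r)$, namely $Id_r-B(z)=2(F(z)+Id_r)^{-1}$ (so $(Id_r-B(z))^{-1}=\tfrac12(F(z)+Id_r)$ and, after taking adjoints, $(Id_r-B(w)^*)^{-1}=\tfrac12(F(w)^*+Id_r)$), together with $(F(z)+Id_r)B(z)=F(z)-Id_r$ and $B(w)^*(F(w)^*+Id_r)=F(w)^*-Id_r$. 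Substituting and multiplying out, the cross terms telescope and the product collapses to $\tfrac14\bigl[(F(z)+Id_r)(F(w)^*+Id_r)-(F(z)-Id_r)(F(w)^*-Id_r)\bigr]=\tfrac12(F(z)+F(w)^*)$, which matches $I(w,z)$.

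The main obstacle is the non-commutativity of the matrices: in Sarason's scalar argument every factor commutes and the identity is immediate, whereas here one must keep the inverse factors on the outside in exactly the order dictated by the two conjugate-linear slots of the $\HH(B)$ inner product, and one must be careful that it is the adjoint $B(w)^*$ (not $B(w)$) that appears. The point is that the precise arrangement $(Id_r-B(z))^{-1}(\cdots)(Id_r-B(w)^*)^{-1}$ is the one for which the relations $(F(z)+Id_r)B(z)=F(z)-Id_r$ and $B(w)^*(F(w)^*+Id_r)=F(w)^*-Id_r$ telescope cleanly, so no commutation is ever invoked; checking this alignment is the only delicate step, the remaining manipulations being routine.
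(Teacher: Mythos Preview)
Your proof is correct and follows essentially the same route as the paper's: both compute the left-hand side as $\dfrac{1}{2(1-\bar w z)}\bigl\langle (F(z)+F(w)^*)u,v\bigr\rangle$ via a partial-fraction/Herglotz-kernel argument, then establish the matrix identity $F(z)+F(w)^*=2(Id_r-B(z))^{-1}(Id_r-B(z)B(w)^*)(Id_r-B(w)^*)^{-1}$, and finally read off the right-hand side from the reproducing property of $\HH(B)$. The only cosmetic difference is in how the middle identity is verified: the paper writes $F=(Id_r+B)(Id_r-B)^{-1}$ and uses that $Id_r+B(z)$ commutes with $(Id_r-B(z))^{-1}$, whereas you substitute $(Id_r-B)^{-1}=\tfrac12(F+Id_r)$ together with $(F+Id_r)B=F-Id_r$ and expand---both are the same Cayley manipulation.
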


\begin{proof}
 For all $u,v\in \CC^m$, we express the inner product	$\<G k_w u,G k_z v\>_{H^2}$ in terms of $F$:
	\[
	\begin{aligned}
\<G k_w u,G k_z v\>_{H^2}=&\frac{1}{2\pi} \int_0^{2\pi} \dfrac{1}{(1-\bar{w}e^{i\theta})(1-ze^{-i\theta})}\< G(e^{i\theta})u,G(e^{i\theta})v\>_{\CC^m} d\theta\\
	&=\dfrac{1}{2\pi(1-\bar{w}z)}\int \frac{1}{2} \left[\dfrac{e^{-i\theta}+\bar{w}}{e^{-i\theta}-\bar{w}} +\dfrac{e^{i\theta}+z}{e^{i\theta}-z}\right] \< G(e^{i\theta})u,G(e^{i\theta})v\>_{\CC^m} \ d\theta\\
	&=\dfrac{1}{2(1-\bar{w}z)}\<(F(w)^*+F(z))u,v\>_{\CC^m}.\\
	\end{aligned}
	\]
Because $(Id_r+B(z))$ and $(Id_r-B(z))^{-1}$ commute, 
		\[
	\begin{aligned}
F(w)^*+F(z)=&(Id_r-B(z))^{-1}(Id_r+B(z))+(Id_r+B(w)^*)(Id_r-B(w)^*)^{-1}\\
=&2(Id_r-B(z))^{-1} \left[Id_r-B(z)B(w)^*\right](Id_r-B(w)^*)^{-1}.\\
	\end{aligned}
	\]
 Finally, we interpret $\<G k_w u,G k_z v\>_{H^2}$ in term of  inner product of $k_w^B(z)$: 
	\[
	\begin{aligned}
\<G k_w u,G k_z v\>_{H^2}=&\dfrac{1}{2(1-\bar{w}z)}\<(F(w)^*+F(z))u,v\>_{\CC^m}\\
	&=\dfrac{1}{1-\bar{w}z}\<(Id_r-B(z))^{-1} \left[Id_r-B(z)B(w)^*\right](Id_r-B(w)^*)^{-1}u,v\>_{\CC^m}\\
	&=\<(Id_r-B(z))^{-1} k_w^B(z) (Id_r-B(w)^*)^{-1}u,v\>_{\CC^m}\\
	&=\< k_w^B(z) (Id_r-B(w)^*)^{-1}u,(Id_r-B(z)^*)^{-1}v\>_{\CC^m}\\
	&=\< k_w^B (Id_r-B(w)^*)^{-1}u,k_z^B(Id_r-B(z)^*)^{-1}v\>_{\HH(B)}.\\
	\end{aligned}
	\]

\end{proof}

The following lemma is useful in connection with de Branges--Rovnyak spaces (\cite{sarason:book} I-5):
\begin{lem}[Douglas's criterion]\label{douglas}
Let $H$, $H_1$ and $H_2$ be Hilbert spaces, and let $A:H_1\to H$,  $B:H_2\to H$ be contractions. We define $\MM(A):=AH_1$ and $\MM(B):=BH_2$. Then $\MM(A)=\MM(B)$ is equivalent to $AA^*=BB^*$.
\end{lem}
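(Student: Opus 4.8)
The plan is to show that the normed range space $\MM(A)$, regarded as a subspace of $H$ carrying its own inner product, is completely determined by the single operator $AA^*$; the equivalence then follows at once in both directions. The natural object to exploit is the inclusion map $\iota_A\colon\MM(A)\to H$, which by the remark preceding the lemma is a contraction since $A$ is. First I would establish the key identity
\[
\iota_A\iota_A^*=AA^*\quad\text{on }H.
\]
To prove it, fix $h\in H$ and note that $A^*h\in(\ker A)^\perp$, so $AA^*h\in\MM(A)$; then for every $x\in(\ker A)^\perp$ the defining inner product on $\MM(A)$ gives
\[
\<Ax,AA^*h\>_{\MM(A)}=\<x,A^*h\>_{H_1}=\<Ax,h\>_H=\<Ax,\iota_A^*h\>_{\MM(A)}.
\]
Since the elements $Ax$ with $x\in(\ker A)^\perp$ exhaust $\MM(A)$, this forces $\iota_A^*h=AA^*h$ in $\MM(A)$, and applying the identity embedding $\iota_A$ yields $\iota_A\iota_A^*h=AA^*h$ in $H$.

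Granting this identity, the forward implication is immediate: if $\MM(A)=\MM(B)$ as Hilbert spaces — that is, the same set of functions with the same inner product — then the two inclusions into $H$ coincide, $\iota_A=\iota_B$, whence $AA^*=\iota_A\iota_A^*=\iota_B\iota_B^*=BB^*$. For the converse I would record the variational description of $\MM(A)$: for $f\in H$ one has $f\in\MM(A)$ if and only if $c(f):=\sup\{\,|\<f,h\>_H|^2 : h\in H,\ \|A^*h\|_H\le1\,\}<\infty$, and in that case $\|f\|_{\MM(A)}^2=c(f)$. This is seen by writing $f=Ax$ with $x\in(\ker A)^\perp=\overline{\mathrm{Ran}\,A^*}$ and using $\<f,h\>_H=\<x,A^*h\>_{H_1}$, so that $c(f)=\|x\|^2=\|f\|_{\MM(A)}^2$; conversely, finiteness of the supremum makes $A^*h\mapsto\<f,h\>_H$ a bounded functional on $\overline{\mathrm{Ran}\,A^*}$, and the Riesz representation produces the required $x$ with $f=Ax$. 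Since this description involves $A$ only through the quantity $\|A^*h\|_H^2=\<AA^*h,h\>_H$, the hypothesis $AA^*=BB^*$ makes the descriptions of $\MM(A)$ and $\MM(B)$ literally identical, yielding both the equality of the underlying sets and of the norms.

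Routine computations aside, the delicate point — and the step I would treat most carefully — is the interpretation of $\MM(A)=\MM(B)$ as an equality of Hilbert spaces rather than of mere sets: the whole content lies in recovering the norm, and both arguments hinge on keeping track of $\ker A$ and the closed range $\overline{\mathrm{Ran}\,A^*}$, so that the defining inner product on $\MM(A)$ is applied only to representatives in $(\ker A)^\perp$. Verifying the adjoint identity $\iota_A\iota_A^*=AA^*$ is the conceptual heart of the matter; once it is in place, the equivalence is essentially forced.
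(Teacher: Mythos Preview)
Your argument is correct. The identification $\iota_A\iota_A^*=AA^*$ is the right organizing principle, and both implications go through as you wrote them; the one point that deserves a word of justification (and you essentially give it) is that in the variational characterization the linear functional $A^*h\mapsto\<f,h\>_H$ is well defined on $\mathrm{Ran}\,A^*$, which follows because if $A^*h=0$ then the supremum condition applied to $th$ for all $t>0$ forces $\<f,h\>_H=0$.

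As for the comparison: the paper does not prove this lemma at all. It is stated with attribution to Sarason's book (\cite{sarason:book}, I-5) and used as a black box, so there is no ``paper's own proof'' to weigh yours against. Your approach is in fact the standard one underlying Douglas's factorization lemma: the content is that the range Hilbert space $\MM(A)$ together with its norm is encoded entirely in $AA^*$, which you make precise via $\iota_A\iota_A^*=AA^*$ and the dual variational description of membership and norm. Either half alone suffices for one implication; presenting both gives a clean, self-contained treatment that the paper simply outsources.
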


Remember that $\GG=\span{S^k g_\ell: \ell\leq r, k\in \NN }=\span{ Gk_w u : u\in \CC^r, w\in \DD }$ and that $\Theta$ is an inner function  such that $\GG=\Theta H^2(\CC^m)$. (When $G$ is outer, $\Theta$ is a constant unitary-matrix)

\begin{lem}\label{lem1}\mbox{}
\begin{enumerate}
\item For all $u\in \CC^m$,  $ T_{Id_r-B}T_{G^*}$ maps $Gk_wu$ to $k^B_w (Id_r-B(w)^*)^{-1}u$.
\item If $G$ is outer, then $T_{Id_r-B}T_{G^*}$ is an isometry from $\GG$ onto $\HH(B)$.
\item If $G=G_i G_o$, with $G_i$ inner and $G_o$ outer, then $T_{Id_r-B}T_{G^*}$ is a coisometry of $\GG$ to $\HH(B)$ with null space $K_{G_i}\cap \GG$.
\item Define $\MM(T_{Id_r-B} T_{G^*}):=T_{Id_r-B} T_{G^*}\GG$. Equipped with the inner product
\[
\<T_{Id_r-B} T_{G^*} h_1,T_{Id_r-B} T_{G^*}h_2 \>:=\<h_1,h_2 \>_2  \ \ \ \ \forall h_1,h_2\in \GG\cup (\ker T_{Id_r-B} T_{G^*})^\perp,
\]
$\MM(T_{Id_r-B} T_{G^*})$ coincides with the de Branges--Rovnyak space  $\HH(B)$.

\end{enumerate}
\end{lem}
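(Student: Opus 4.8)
The plan is to prove the four parts in order, using Lemma~\ref{lem5.2} as the computational engine and reducing the general case~(3) to the outer case~(2). Write $A:=T_{Id_r-B}T_{G^*}$, and observe first that, since $Id_r-B\in H^\infty(\CC^r\to\CC^r)$, the operator $T_{Id_r-B}$ is plain multiplication by $Id_r-B$, so no projection is lost at that step. For (1) I would identify $T_{G^*}(Gk_wu)=p_+(G^*Gk_wu)$ through its pairing with the reproducing kernels, $\<T_{G^*}(Gk_wu),k_zv\>=\<Gk_wu,Gk_zv\>_{H^2}$, which the first computation in Lemma~\ref{lem5.2} evaluates as $\tfrac{1}{2(1-\bar wz)}\<(F(w)^*+F(z))u,v\>$; hence $T_{G^*}(Gk_wu)$ is the function $z\mapsto\tfrac{1}{2(1-\bar wz)}(F(w)^*+F(z))u$. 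Multiplying by $Id_r-B(z)$ and inserting the identity $F(w)^*+F(z)=2(Id_r-B(z))^{-1}\bigl[Id_r-B(z)B(w)^*\bigr](Id_r-B(w)^*)^{-1}$ from the proof of Lemma~\ref{lem5.2} telescopes the expression to $k_w^B(Id_r-B(w)^*)^{-1}u$, which is the claim.

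For (2) I would combine (1) with the isometric identity of Lemma~\ref{lem5.2}: on the total family $\{Gk_wu\}$ the assignment $Gk_wu\mapsto k_w^B(Id_r-B(w)^*)^{-1}u$ preserves inner products, hence extends to an isometry on $\GG=\span{Gk_wu:u\in\CC^r,w\in\DD}$, which is all of $H^2(\GGG)$ when $G$ is outer. Surjectivity onto $\HH(B)$ follows because $(Id_r-B(w)^*)^{-1}$ is invertible, so the images run through the total family $\{k_w^Bu'\}$ of the reproducing-kernel space $\HH(B)$; an isometry has closed range, so the range is all of $\HH(B)$.

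For (3) I would reduce to (2) through the inner--outer factorization $G=G_iG_o$. As $G_i$ is inner of rank $r$, we have $G_i^*G_i=Id_r$ a.e.\ on $\TT$, so $F$, $B$ and $\HH(B)$ depend only on $G^*G=G_o^*G_o$ and are unchanged on passing from $G$ to $G_o$. Since $G_o^*$ is coanalytic the product rule yields $T_{G^*}=T_{G_o^*}T_{G_i^*}$, where $T_{G_i^*}$ is the coisometry adjoint to the isometry $h\mapsto G_ih$, with null space $K_{G_i}$, while $T_{Id_r-B}T_{G_o^*}$ is the surjective isometry onto $\HH(B)$ furnished by (2) applied to $G_o$. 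Consequently $A$ is a coisometry onto $\HH(B)$ whose null space is $K_{G_i}$; intersecting with the domain $\GG=G_iH^2(\CC^r)$ gives the stated null space $K_{G_i}\cap\GG$.

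Part (4) is then immediate: restricted to $\GG=(\ker A)^\perp$, the operator $A$ is a surjective isometry onto $\HH(B)$, so the inner product defining $\MM(A)$ coincides with that of $\HH(B)$; equivalently, one verifies the operator identity $AA^*=Id_r-T_BT_{B^*}$ on the reproducing kernels---using (1) and $(Id_r-T_BT_{B^*})k_wu=k_w^Bu$---and applies Douglas's criterion (Lemma~\ref{douglas}) together with $\HH(B)=\MM\bigl((Id_r-T_BT_{B^*})^{1/2}\bigr)$ to conclude $\MM(A)=\HH(B)$. I expect the main difficulty to lie in (1), in handling the unbounded operator $T_{G^*}$, which forces one to argue through reproducing-kernel pairings and through the noncommutative factorization of $F(w)^*+F(z)$; a secondary difficulty is confirming in (3) that the inner factor $G_i$ leaves $B$ untouched and contributes only the coisometric part.
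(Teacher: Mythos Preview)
Your proposal is correct and follows essentially the same approach as the paper: both prove (1) by pairing with reproducing kernels and invoking the computation from Lemma~\ref{lem5.2}, deduce (2) from the isometric identity of that lemma on the total family $\{Gk_wu\}$, reduce (3) to (2) via the factorization $T_{G^*}=T_{G_o^*}T_{G_i^*}$ after observing that $B$ depends only on $G^*G$, and read off (4) from the coisometry property. Your treatment is in fact slightly more explicit than the paper's in two places---you spell out surjectivity in (2) via the invertibility of $Id_r-B(w)^*$, and you mention the Douglas-criterion route to (4)---but the underlying argument is the same.
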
 

\begin{proof} 
\begin{enumerate}
	\item  We begin by computing the range of $Gk_w u$ by $T_{Id_r-B}T_{G^*}$:
	\[
	\begin{aligned}
	\<\left(T_{Id_r-B}T_{G^*}\right)Gk_w u,k_z v\>_{H^2}&=\< (Id_r-B(z)) T_{G^*}G(z) k_w(z) u, v\>_{\CC^m}\\
	&=\<  T_{G^*}G  k_w  u,k_z (Id_r-B^*) v\>_{H^2} \\
	&=\< G k_w  u, G k_z (Id_r-B(z)^*) v\>_{H^2}\\
	&=\< k_w^B (Id_r-B(w)^*)^{-1} u,k_z^B v\>_{\HH(B)}\\
	&=\< k_w^B (Id_r-B(w)^*)^{-1} u,k_z v\>_{H^2}.\\
	\end{aligned}
	\]
	Therefore, $\left(T_{Id_r-B}T_{G^*}\right)$ sends $Gk_w u$ to $k_w^B (Id_r-B(w)^*)^{-1}u.$ 

\item  The inner product of two functions in $\GG$ is equal to the inner product of their images in $\HH(B)$:
\[
	\begin{aligned}
\<T_{Id_r-B}T_{G^*}Gk_w u,T_{Id_r-B}T_{G^*}Gk_z v\>_{H^2}=&\< k_w^B (Id_r-B(w)^*)^{-1} u,  k_z^B (Id_r-B(z)^*)^{-1} v\>_{\HH(B)}\\
	=&\<Gk_wu,Gk_zv \>_{H^2}.
	\end{aligned}
\]
 Because $G$ is outer, the functions $Gk_w u$ span $\GG$, reduced to $H^2(\GGG)$ and the result follows.

\item  The definition of $B$ does not depend on $G_i$, so $T_{Id_r-B}T_{G^*}=T_{Id_r-B}T_{G_o^*}T_{G_i^*}$. But $T_{Id_r-B}T_{G_o^*}$ sends $T_{G_i^*}\GG$ isometrically to $\HH(B)$, which is dense in $\GG_o$, so we get the result by continuation. Moreover,
$\ker T_{Id_r-B}T_{G^*}|_{\GG}=\ker T_{G_i^*}\cap \GG=K_{G_i}\cap \GG$.

\item  The last sentence allows us to identify the two de Branges--Rovnyak spaces:
\[
\MM\left(T_{Id_r-B}T_{G^*}\right):=T_{Id_r-B}T_{G^*} \GG=\HH(B)=\MM\left((Id_r-T_BT_{B^*})^{1/2}\right).
\]

\end{enumerate}
\end{proof}

\begin{thm}\label{key}As operators on $H^2(\CC^r)$, we have $(T_{Id_r-B}T_{G^*})(T_{Id_r-B}T_{G^*})^*=Id_r-T_B T_{B^*}$.
\end{thm}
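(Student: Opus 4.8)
The plan is to prove the operator identity
\[
(T_{Id_r-B}T_{G^*})(T_{Id_r-B}T_{G^*})^*=Id_r-T_B T_{B^*}
\]
by invoking Douglas's criterion (Lemma~\ref{douglas}) together with the space identification already achieved in Lemma~\ref{lem1}(4). The key observation is that both sides of the claimed identity are of the form $AA^*$ for explicit contractions, and Lemma~\ref{lem1}(4) has already shown that the two range spaces $\MM(T_{Id_r-B}T_{G^*})$ and $\MM\bigl((Id_r-T_BT_{B^*})^{1/2}\bigr)=\HH(B)$ coincide \emph{with equal norms}. Douglas's criterion states that equality of range spaces as normed spaces is equivalent to equality of the operators $AA^*$; so the hard analytic work is essentially done, and what remains is to assemble it correctly.

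First I would set $A:=T_{Id_r-B}T_{G^*}$, viewed as a contraction from $\GG$ (or from $H^2(\CC^m)$, extending by zero off $\GG$) into $H^2(\CC^r)$, and set $C:=(Id_r-T_BT_{B^*})^{1/2}$, which is a positive contraction on $H^2(\CC^r)$ since $B$ lies in the unit ball of $H^\infty(\CC^r\to\CC^r)$. Then $AA^*$ and $CC^*=C^2=Id_r-T_BT_{B^*}$ are the two operators I wish to identify. Lemma~\ref{lem1}(4) asserts that $\MM(A)=\MM(C)=\HH(B)$ and that the de Branges--Rovnyak norm on $\HH(B)$ agrees with the range-space norm induced by $A$ (the defining isometry property of $\MM(\cdot)$). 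Since the range-space norm determines $AA^*$ uniquely via Douglas's criterion, I conclude $AA^*=CC^*$, which is exactly the assertion.

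To make the application of Douglas clean, I would first record that both $A$ and $C$ are genuine contractions into the same ambient Hilbert space $H^2(\CC^r)$: the contractivity of $A$ is the isometry/coisometry content of Lemma~\ref{lem1}(2)--(3), while $C$ is manifestly a contraction as the positive square root of $Id_r-T_BT_{B^*}\ge 0$. Then I would verify that the inner product defining $\MM(A)$ in Lemma~\ref{lem1}(4) is precisely the $\HH(B)$ inner product, i.e.\ that the identification map is isometric and not merely bijective; this is the step that upgrades "same set" to "same normed space," which is what Douglas requires. With both range spaces equal as normed spaces, Douglas's criterion yields $AA^*=CC^*=Id_r-T_BT_{B^*}$ directly.

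The main obstacle I anticipate is bookkeeping around domains rather than any deep estimate: one must be careful that $T_{G^*}$ is a priori only densely defined (as noted in the paper, $T_G$ is unbounded), so $A$ should be understood on $\GG$ and the adjoint $A^*$ computed on the closure $\HH(B)$, ensuring the reproducing-kernel computation of Lemma~\ref{lem5.2} and Lemma~\ref{lem1}(1) controls $A$ on the dense span $\{Gk_w u\}$ before passing to limits. Provided the reproducing kernels $k_w^B(Id_r-B(w)^*)^{-1}u$ are shown to span $\HH(B)$ (so that the isometric identification extends from the dense span to all of $\GG$), the rest is a formal application of Douglas. I would therefore spend the proof verifying the isometry on kernels and then invoke Lemmas~\ref{lem1} and~\ref{douglas} to finish.
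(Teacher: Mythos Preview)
Your proposal is correct and follows essentially the same route as the paper: use Lemma~\ref{lem1}(4) to identify $\MM(T_{Id_r-B}T_{G^*})$ with $\HH(B)=\MM((Id_r-T_BT_{B^*})^{1/2})$ as normed spaces, and then apply Douglas's criterion (Lemma~\ref{douglas}) to conclude the operator identity. The paper's own proof is just this two-line observation; your additional remarks about contractivity, domains, and density of kernels are reasonable sanity checks but are not needed beyond what Lemma~\ref{lem1} already provides.
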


\begin{proof}
The Douglas criterion, Lemma \ref{douglas}, implies that $\MM\left(T_{Id_r-B}T_{G^*}\right)=\HH(B)$ is equivalent to $(T_{Id_r-B}T_{G^*})(T_{Id_r-B}T_{G^*})^*=Id_r-T_B T_{B^*}$ as operators on $H^2(\CC^r)$.

 \end{proof}

\subsection{A matricial  version of Sarason's theorem}
 
With the previous notation,

\begin{thm}\label{theoprincipal}
Let $G=[g_1,\dots,g_r]\in H^2(\CC^r\to \CC^m)$  and let $U\in H^\infty(\CC^r\to \CC^r)$ be inner of rank $r$ vanishing at zero. Then
\[
T_G|_{K_U} \mbox{ is an isometry }\Longleftrightarrow T_{B^*}K_U=\{0\} \Longleftrightarrow BH^2(\CC^r)\subset UH^2(\CC^{r}).
\]
\end{thm}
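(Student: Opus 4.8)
I would prove the three equivalences by exploiting the operator identity established in Theorem~\ref{key}, namely
\[
(T_{Id_r-B}T_{G^*})(T_{Id_r-B}T_{G^*})^*=Id_r-T_B T_{B^*}.
\]
The natural route is to show
\[
T_G|_{K_U}\text{ isometric}\ \Longrightarrow\ T_{B^*}K_U=\{0\}\ \Longrightarrow\ BH^2(\CC^r)\subset UH^2(\CC^r)\ \Longrightarrow\ T_G|_{K_U}\text{ isometric},
\]
closing the loop. The middle equivalence is the cleanest: since $U$ is inner of rank $r$, $K_U=H^2(\CC^r)\ominus UH^2(\CC^r)$, and the condition $T_{B^*}K_U=\{0\}$ says that $T_{B^*}$ annihilates $K_U$, i.e.\ $K_U\subset\ker T_{B^*}$. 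For $f\in K_U$ one has $T_{B^*}f=0$ exactly when $B^*f\perp H^2(\CC^r)$, which by a standard duality argument forces $Bf_0\in UH^2$ for the generating functions; unwinding this on the dense set of reproducing kernels gives $BH^2(\CC^r)\subset UH^2(\CC^r)$. Conversely, if $B=UB_0$, then for $h\in H^2(\CC^r)$ and $f\in K_U$ we get $\<T_{B^*}f,h\>=\<f,Bh\>=\<f,UB_0h\>=0$ because $f\perp UH^2(\CC^r)$, so $T_{B^*}K_U=\{0\}$.

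**Linking to the isometry.** For the remaining implications I would restrict the identity from Theorem~\ref{key} to $K_U$. Since $B$ vanishes at zero and $U(0)=0$, the operator $T_B$ maps $K_U$ into $K_U$ (this uses $B=UB_0$ in one direction; in the other direction it is built into the structure), so $Id_r-T_BT_{B^*}$ restricted to $K_U$ measures the defect of $T_{B^*}|_{K_U}$. For $f\in K_U$,
\[
\|T_{Id_r-B}T_{G^*}\,w\|^2 = \<(Id_r-T_BT_{B^*})f,f\> = \|f\|^2 - \|T_{B^*}f\|^2,
\]
where $w$ is the appropriate preimage. Tracing through Lemma~\ref{lem1}(1), the map $T_{Id_r-B}T_{G^*}$ carries $Gk_w u$ isometrically onto $\HH(B)$, and $T_G$ is its (unbounded) partial inverse. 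Thus $T_G|_{K_U}$ is an isometry precisely when the defect term $\|T_{B^*}f\|^2$ vanishes for all $f\in K_U$, i.e.\ precisely when $T_{B^*}K_U=\{0\}$. This yields both the first implication and its converse simultaneously.

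**The main obstacle.** The delicate point is the noncommutativity of the matrices, which blocks the direct scalar computation Sarason used. Concretely, in the scalar setting one factors $b=ub_0$ and manipulates $T_{\bar b}$ freely; here $T_{B^*}f=0$ must be translated into the \emph{left} divisibility $BH^2(\CC^r)\subset UH^2(\CC^r)$, and one must be careful that $T_B$ genuinely preserves $K_U$ so that the quadratic form $\|f\|^2-\|T_{B^*}f\|^2$ is the correct defect. I expect the hard part to be verifying that $T_{B^*}$ annihilating $K_U$ is equivalent to the inclusion $BH^2(\CC^r)\subset UH^2(\CC^r)$ rather than some weaker one-sided condition: this requires knowing that the reproducing kernels $k_w^B u$ span $\HH(B)$ and using Lemma~\ref{lem5.2} to pass between the $H^2$ inner product and the $\HH(B)$ inner product without commuting the matrix factors $(Id_r-B(w)^*)^{-1}$ past $U$. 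Once that translation is secured, the operator identity from Theorem~\ref{key} makes the equivalence with the isometry condition essentially formal.
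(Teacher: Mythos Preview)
Your plan has a genuine gap in the implication $T_G|_{K_U}$ isometric $\Rightarrow T_{B^*}K_U=\{0\}$, and you have also misidentified where the difficulty lies. The equivalence $T_{B^*}K_U=\{0\}\Leftrightarrow BH^2(\CC^r)\subset UH^2(\CC^r)$ is a one-line duality (the paper calls it ``obvious''): $T_{B^*}f=0$ for every $f\in K_U$ says exactly that $Bh\perp K_U$ for every $h\in H^2(\CC^r)$, i.e.\ $Bh\in UH^2(\CC^r)$. No reproducing kernels, no $\HH(B)$ machinery, and no commutativity issues are needed here.

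The real work is the isometry equivalence, and your argument there does not close. From Theorem~\ref{key} you only get, for $h\in K_U$,
\[
\|T_G T_{Id_r-B^*}h\|^2=\langle(Id_r-T_BT_{B^*})h,h\rangle=\|h\|^2-\|T_{B^*}h\|^2,
\]
because the ``preimage'' $w$ you invoke is $(T_{Id_r-B}T_{G^*})^*h=T_GT_{Id_r-B^*}h$, \emph{not} $T_Gh$. So you obtain information about $T_Gh-T_GT_{B^*}h$, not about $T_Gh$ alone. Assuming $\|T_Gh\|=\|h\|$ and expanding gives
\[
2\Re\langle T_GT_{B^*}h,T_Gh\rangle=\|T_{B^*}h\|^2+\|T_GT_{B^*}h\|^2,
\]
which by itself does not force $T_{B^*}h=0$. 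The paper extracts the conclusion via Sarason's perturbation trick: replace $h$ by $h+cv$ with $v\in\CC^r$ (legitimate because $U(0)=0$ gives $\CC^r\subset K_U$ and $B(0)=0$ gives $T_{B^*}v=0$), read off $T_{G^*}T_GT_{B^*}h(0)=0$ from the vanishing of the linear term in $c$, then replace $h$ by $S^{*k}h$ to upgrade this to $T_{G^*}T_GT_{B^*}h=0$, whence $T_GT_{B^*}h=0$ and finally $T_{B^*}h=0$. Your sketch skips this step entirely, and your remark that ``$T_B$ maps $K_U$ into $K_U$'' is in fact false for matrix symbols (the paper gives an explicit $2\times2$ counterexample immediately after the proof), so it cannot be used to rescue the argument.
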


The proof follows Sarason's ideas, with modifications to bypass the fact that $T_{B^*}K_U$ might not be a subspace of $K_U$.

\begin{proof}
The last equivalence is obvious.

Suppose that $T_{B^*}K_U=\{0\}$. Then $T_Gh=T_G T_{Id_r-B^*}h$ for all $h\in K_U$.
Thanks to Lemma \ref{lem1}, $(T_{Id_r-B} T_{G^*})(T_{Id_r-B} T_{G^*})^*=Id_r-T_BT_{B^*}$, and we compute the norm of $\| T_G h \|^2_{H^2}$:
	\[
	 \begin{aligned}
	\| T_G h \|^2_{H^2}&=\| T_G T_{Id_r-B^*} h\|^2_{H^2}=\< (T_{Id_r-B} T_{G^*})(T_{Id_r-B} T_{G^*})^*h,h\>\\
	&=\< (Id_r-T_BT_{B^*})h,h\>\\
	&=\<h,h\>_{H^2}=\|h\|^2_{H^2}.
	 \end{aligned}
	\]
	Thus, $T_G$ acts as an isometry on $K_U$. 
 
 Conversely, suppose that $T_G|_{K_U}$ is an isometry.  Let $h\in K_U$. Lemma \ref{lem1} and  Theorem~\ref{key} assert that $T_{Id_r-B}T_{G^*} : \GG \to \HH(B)$ is a coisometry, and\\ \mbox{ $(T_{Id_r-B}T_{G^*}) (T_{Id_r-B}T_{G^*})^*=Id_r-T_B T_{B^*}$} on $H^2(\CC^r)$. It follows that\\ \mbox{$\|T_G T_{Id_r-B^*} h \|_{H^2}^2=\<(Id_r-T_B T_{B^*})h,h \>_{H^2}$}, whose development is:
 \[ 
 \|T_Gh\|^2-\<T_{G}T_{B^*}h,T_Gh\>-\<T_Gh,T_{G}T_{B^*}h\>+\|T_{G}T_{B^*}h\|^2=\|h\|^2-\|T_{B^*}h\|^2.
\]

Using the hypothesis $\|T_Gh\|=\|h\|$, we get
\begin{equation}\label{calcul1}
 \<T_{G}T_{B^*}h,T_Gh\>+\<T_Gh,T_{G}T_{B^*}h\>=\|T_{B^*}h\|^2+\|T_{G}T_{B^*}h\|^2.
\end{equation}

Now, with Sarason's trick, we will show that $T_G T_{B^*}h=0$, for $h\in K_U$. Remember that $U(0)=0$, so $\CC^r\subset K_U$ and because $B(0)=0$, we get $T_{B^*}v=0$ for all $v\in\CC^m$.
With $c\in \CC$ and $v\in\CC^m$, $h+cv$ stays in $K_U$ and $T_{B^*}(h+cv)=T_{B^*}h$.  Replacing $h$ by $h+cv$ in the equality~\ref{calcul1}, we have:
\[
\<T_{G}T_{B^*}h,T_G(h+cv)\>+\<T_G(h+cv),T_{G}T_{B^*}h\>=\|T_{B^*}h\|^2+\|T_{G}T_{B^*}h\|^2.
\]
 This is equivalent to \[ 2\Re\left( c \<T_{G}T_{B^*}h,T_Gv\>\right) =\|T_{B^*}h\|^2+\|T_{G}T_{B^*}h\|^2-2\Re \<T_GT_{B^*}h,T_G h  \>.\]
This holds for all $c\in \CC$, so necessarily
\begin{equation}\label{eq}
  \Re{\<T_{G^*}T_{G}T_{B^*}h(0),v\>}=0 \mbox{ and } \|T_{B^*}h\|^2+\|T_{G}T_{B^*}h\|^2-2\Re \<T_{G^*GB^*}h, h  \>=0.
\end{equation}
The first equality holds for all $v\in\CC^m$, so $T_{G^*}T_{G}T_{B^*}h(0)=0$. Replacing $h$ by $S^{*k}h$, which stays in $K_U$, we deduce that  
$T_{G^*}T_{G}T_{B^*}S^{*k}h(0)=0$ and so $T_{G^*}T_{G}T_{B^*}h=0$. 
This implies that $T_{B^*}h\in \ker T_G$ or $T_{G}T_{B^*}h\in \ker T_{G^*}$. We denote $f=T_{G}T_{B^*}h$. Then, $f\in \ker T_{G^*}\cap \GG$ and there exists $q\in H^2(\CC^m,\mu_G)$ such that $f=Gq$. The norm of $f$ is $\|f\|^2=\<G^*Gq,q\>=\<T_{G^*}f,q\>=0$. Finally, $T_{B^*} K_U\subset \ker T_G$.

The second equality of (\ref{eq}) implies the following equivalences:
\[
\begin{aligned}
&\|T_{B^*}h\|^2+\|T_{G}T_{B^*}h\|^2-2\Re \<T_{G^*}T_GT_{B^*}h, h  \>=0\\
\Longleftrightarrow & \|T_{B^*}h\|^2-\|T_G h\|^2+ \|T_G h\|^2 +\|T_{G}T_{B^*}h\|^2-2\Re \<T_{G^*}T_GT_{B^*}h, h  \>=0\\
\Longleftrightarrow & \|T_{B^*}h\|^2-\|T_G h\|^2+ \|T_G T_{Id_r-B^*}h\|^2=0\\
\Longleftrightarrow & \|T_G T_{Id_r-B^*}h\|^2=\|T_G h\|^2-\|T_{B^*}h\|^2.\\
\end{aligned}
\]
 But we know that $T_{B^*} K_U\subset \ker T_G$, so $\|T_G T_{Id_r-B^*}h\|^2=\|T_G h\|^2$ and $\|T_{B^*}h\|^2=0$.  So we get $T_{B^*}K_U=\{0\}$ as desired.
 
\end{proof}

\begin{cor}
The operator $T_{Id_r-B}T_{G^*}$ acts on $\FF$ as  division by $G$.
\end{cor}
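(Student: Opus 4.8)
The phrase ``division by $G$'' should be read as follows: by Theorem~\ref{descriptionnearly}, every $f\in\FF$ has a unique representation $f=Gf_0$ with $f_0\in K_U$, and the claim is that $T_{Id_r-B}T_{G^*}f=f_0$. In other words, $T_{Id_r-B}T_{G^*}$ inverts the isometry $T_G|_{K_U}$ on $\FF$. Since the corollary is stated in the setting of Theorem~\ref{theoprincipal}, where $T_G|_{K_U}$ is an isometry, I would use the equivalent property $T_{B^*}K_U=\{0\}$ as the single input from the hypothesis.

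The plan is to combine an adjoint computation with Theorem~\ref{key}. First I would record that $(T_{Id_r-B}T_{G^*})^*=T_GT_{Id_r-B^*}$, using $(T_{G^*})^*=T_G$ and $T_B^*=T_{B^*}$. Fixing $f=Gf_0\in\FF$ with $f_0\in K_U$, the hypothesis $T_{B^*}f_0=0$ gives $T_{Id_r-B^*}f_0=f_0$, and hence
\[
(T_{Id_r-B}T_{G^*})^*f_0=T_Gf_0=Gf_0=f.
\]
Applying $T_{Id_r-B}T_{G^*}$ to both sides and invoking Theorem~\ref{key}, which asserts $(T_{Id_r-B}T_{G^*})(T_{Id_r-B}T_{G^*})^*=Id_r-T_BT_{B^*}$ on $H^2(\CC^r)$, I would obtain
\[
T_{Id_r-B}T_{G^*}f=(Id_r-T_BT_{B^*})f_0=f_0-T_B(T_{B^*}f_0)=f_0,
\]
where the last equality again uses $T_{B^*}f_0=0$. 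This is precisely the statement that $T_{Id_r-B}T_{G^*}$ divides by $G$ on $\FF$.

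The only genuinely delicate point is the bookkeeping forced by the unboundedness of $T_G$, and hence of $T_{Id_r-B}T_{G^*}$: one must check that the adjoint identity is applied to vectors lying in the appropriate domain and that the two-step composition above really coincides with the bounded operator furnished by Theorem~\ref{key}. Here the situation is favourable, because $f_0\in K_U$ is a vector on which $T_G$ acts isometrically and for which $T_{B^*}f_0=0$, so every intermediate vector remains in the relevant Hardy space and the manipulation is legitimate; making this explicit is the main obstacle to a fully rigorous write-up. As a conceptual check one can also observe that the inverse of the isometry $T_G|_{K_U}$ is its adjoint $(T_G|_{K_U})^*$, and that $\<(T_G|_{K_U})^*f,h\>=\<Gf_0,Gh\>=\<f_0,h\>$ for all $h\in K_U$ shows $(T_G|_{K_U})^*$ performs exactly the division; the computation above identifies $T_{Id_r-B}T_{G^*}|_\FF$ with this adjoint.
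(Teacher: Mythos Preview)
Your proof is correct and follows essentially the same route as the paper: both use $T_{B^*}f_0=0$ to rewrite $f=Gf_0$ as $T_GT_{Id_r-B^*}f_0=(T_{Id_r-B}T_{G^*})^*f_0$, and then apply Theorem~\ref{key} to collapse $(T_{Id_r-B}T_{G^*})(T_{Id_r-B}T_{G^*})^*f_0$ to $(Id_r-T_BT_{B^*})f_0=f_0$. Your write-up is in fact more explicit than the paper's, which compresses these steps into two lines.
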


\begin{proof}
Let $Gh\in\FF$. Then, thanks to the last theorem, $T_{B^*} h=0$. So, \[ T_{Id-B}T_{G^*} Gh=T_{Id-b}T_{G^*}T_G T_{Id-B^*}h\] and Lemma \ref{key} implies that $(Id-T_{B}T_{B^*})h=h$.
\end{proof}

In the original proof, Sarason uses the fact that scalar  model spaces $K_u$ (or more generally  de Branges spaces \cite{sarason:book} II-7) are stable under the action of $T_{\bar{b}}$ for every symbol  $b\in H^\infty$. This does not hold for matrix symbols. The inclusion  $T_{B^*}K_U\subset K_U$ means that $BU H^2 (\CC^r)\subset UH^2(\CC^r)$, and so $U^*B U \in H^{\infty}(\CC^r\to\CC^r)$.  This is obvious if $B$ and $U$ commute.

In this section, we will construct an example in $H^\infty(\CC^2\to\CC^2)$ where  $T_{B^*}K_U$ is not contained in $K_U$.
The following characterization of $(2\times 2)$-matrix-valued inner functions is due to Garcia, in \cite{garcia}.
\begin{thm}\label{garcia}
Let  $U\in H^\infty(\CC^2\to \CC^2)$. Then $U$ is inner if and only if $U$ is of the form:
	\[ 
U=\begin{pmatrix}
a  & -b \\ \theta \bar{b} & \theta \bar{a}
\end{pmatrix}\]
where $\theta:=\det \ U$  is inner, and $ a,b\in K_{z\theta}$ verify $ |a|^2+|b|^2=1$  a.e.\ on $\TT$.
\end{thm}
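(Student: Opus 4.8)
The plan is to route both implications through a single elementary fact about scalar model spaces: for an inner function $\Theta$ and $a\in H^2$, one has $a\in K_\Theta$ if and only if $\Theta\bar a\in zH^2$. Indeed, $a\perp\Theta H^2$ means $a\bar\Theta\perp H^2$ in $L^2$, i.e.\ the Fourier coefficients of $a\bar\Theta$ of nonnegative index vanish, so $a\bar\Theta\in\overline{zH^2}$; conjugating gives exactly $\Theta\bar a\in zH^2$. Specializing to $\Theta=z\theta$, I would record the form I actually need: $a\in K_{z\theta}$ if and only if $\theta\bar a\in H^2$. This is the bridge between the algebraic shape of $U$ and the analytic conditions $a,b\in K_{z\theta}$.

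For the implication $(\Leftarrow)$, assume $U=\begin{pmatrix} a & -b \\ \theta\bar b & \theta\bar a\end{pmatrix}$ with $\theta$ inner, $a,b\in K_{z\theta}$, and $|a|^2+|b|^2=1$ a.e. First I would check that $U\in H^\infty(\CC^2\to\CC^2)$: from $|a|^2+|b|^2=1$ we get $|a|,|b|\le 1$ a.e., so $a,b\in H^\infty$, while the bridge fact turns $a,b\in K_{z\theta}$ into $\theta\bar a,\theta\bar b\in H^2$, and these have modulus $\le 1$ a.e., hence lie in $H^\infty$; thus all four entries are holomorphic. A direct multiplication of $U(\xi)^*$ by $U(\xi)$, using $|\theta|=1$ a.e., then gives $U^*U=Id_2$ a.e.: the diagonal entries are $|a|^2+|b|^2=1$ and the two off-diagonal entries cancel via $\bar\theta\theta=1$. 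Finally $\det U=a\,\theta\bar a+b\,\theta\bar b=\theta(|a|^2+|b|^2)=\theta$ on $\TT$, so $\det U=\theta$ is inner and $U$ is inner.

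For the converse $(\Rightarrow)$, I interpret ``inner'' as boundary-unitary, so $U(\xi)$ is unitary a.e.; then $\det U$ is bounded, holomorphic and unimodular on $\TT$, hence inner (equivalently by the determinant criterion for rank), and I set $\theta:=\det U$. Writing $U=\begin{pmatrix} a & c \\ d & e\end{pmatrix}$ with entries in $H^\infty$, the identity $U^*=U^{-1}=\theta^{-1}\,\mathrm{adj}\,U$ on $\TT$ yields entry by entry the relations $e=\theta\bar a$ and $d=-\theta\bar c$; setting $b:=-c$ turns these into $e=\theta\bar a$ and $d=\theta\bar b$, so $U=\begin{pmatrix} a & -b \\ \theta\bar b & \theta\bar a\end{pmatrix}$ exactly as claimed. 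Orthonormality of the columns of $U(\xi)$ gives $|a|^2+|b|^2=1$ a.e. Finally $\theta\bar a=e$ and $\theta\bar b=d$ are genuine entries of $U$, hence lie in $H^\infty\subset H^2$; by the bridge fact this says precisely $a,b\in K_{z\theta}$.

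The routine parts (verifying $U^*U=Id_2$ and reading off the entries from $U^*=U^{-1}$) are mechanical. The one genuinely load-bearing step will be the model-space membership criterion $a\in K_{z\theta}\Longleftrightarrow\theta\bar a\in H^2$, together with the care needed in reading ``inner'' as boundary-unitary, which is what guarantees that $\theta=\det U$ is a bona fide (nonvanishing) inner function rather than the zero function. Once that equivalence is in place, the two analytic conditions $a,b\in K_{z\theta}$ are revealed to be nothing more than the requirement that the prospective lower-row entries $\theta\bar b$ and $\theta\bar a$ be holomorphic: automatic in the forward direction because they are entries of $U$, and exactly what must be imposed to run the backward construction.
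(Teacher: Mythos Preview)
Your argument is correct and self-contained. Note, however, that the paper does not actually prove this theorem: it is merely quoted, with attribution to Garcia \cite{garcia}, so there is no in-paper proof to compare against. Your approach via the elementary equivalence $a\in K_{z\theta}\iff \theta\bar a\in H^2$, combined with the Cramer-type identity $U^{-1}=\theta^{-1}\,\mathrm{adj}\,U$ on $\TT$, is the standard direct route.

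One small remark on scope: the paper's general definition of ``inner'' for $H^\infty(\CC^m\to\CC^m)$ allows partial isometries of any fixed rank $r\le m$, so in principle a rank-$1$ inner $U\in H^\infty(\CC^2\to\CC^2)$ (with $\det U\equiv 0$) would fall outside the displayed form. You are right that the theorem as stated forces the reading ``inner of rank $2$'' (equivalently, boundary-unitary), since it asserts $\theta=\det U$ is inner; your explicit flag of this point is appropriate. With that reading, both implications go through exactly as you wrote them.
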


   Garcia gives an interesting example of an inner function by taking $a:=(1+\theta)/2$,  $b:=-i(1-\theta)/2$ and
\[
V:=	\frac{1}{2}
	\begin{pmatrix}
(1+\theta) & i(1-\theta) \\ -i(1-\theta)  &(1+\theta) 
\end{pmatrix}. 
	\]

Taking $\theta=z$, for example, we notice that the entries are outer scalar  functions. We can look for $U=zV$ which is still inner and vanishing at zero.

Let $B=\begin{pmatrix} b_1 & b_2 \\ b_3 & b_4 \end{pmatrix}\in H^\infty (\CC^2\to \CC^2)$.
A calculation shows that $U^*BU$ is equal to:
{\small
\[
\begin{pmatrix}
b_1+b_4+\Re(\theta)(b_1-b_4)+i \Im \theta (b_3+b_2) & b_2-b_3+\Re(\theta)(b_3+b_2)+i \Im \theta (b_1+b_4)\\
b_3-b_2+\Re(\theta)(b_3-b_2)+i \Im \theta (-b_1+b_4) & -b_1+b_4+\Re(\theta)(b_1+b_4)+i \Im \theta (-b_3-b_2)
\end{pmatrix}.
\]}
If we suppose that $b_4=-b_1$ and $b_3=-b_2$, then 
\[
U^*BU=\begin{pmatrix}
\Re(\theta)b_1 & b_2+\Im(\theta)b_1 \\
-2b_2+ \Im (\theta) b_1 & -\Re(\theta)b_1 
\end{pmatrix},
\] 
which is not in  $H^\infty(\CC^2\to \CC^2)$ and so $T_{B^*} K_U \not\subset K_U$. 

\begin{rem}
In \cite{sarason:art}, Sarason establishes an alternative proof of Theorem~\ref{descriptionnearly} using Corollary \ref{key}. This approach could be generalized to the vector-valued case.

\end{rem}

\section{Kernel of Toeplitz operators}\label{kernel}

\subsection{Some examples}
For a nearly $S^*$-invariant subspace $\FF=GK_U$, we recall that $W=\FF\cap(\FF\cap zH^2(\CC^m))^\perp$, and  $r:=\dim\, W\leq m$. 
If $m=2$, then we have two ways to build $\FF$ with $\dim\, \FF=2$.

\begin{ex}~\label{ex1}
	If $r=2$, $G=[g_1,g_2]$ and $U=z Id_2$. Let  $\FF$ be the space
	\[ \left\{\left(\begin{array}{c} a(1+z)^{1/2} \\ b(1-z)^{1/2} \end{array}\right), (a,b)\in \CC^2\right\}.\]
 Because $f(0)=0$ implies $f=0$, it is  $S^*$-nearly invariant. We see that $W=\FF$ and $\FF=T_G \CC^2$ with 
	\[
	G(z)=\dfrac{1}{2} \begin{pmatrix}  (1+z)^{1/2} & 0 \\ 0 & (1-z)^{1/2}   \end{pmatrix}.
	\]
The functions $\frac{1}{2}(1+z)^{1/2}$ and $\frac{1}{2}(1-z)^{1/2}$ are outer in $H^2$ and $G$ is outer, because its determinant is outer (see section \ref{innerouter}).  Moreover, $\GG=H^2(\CC^2)$. 
	
	Is $\FF$ the kernel of a Toeplitz operator $T_{\phii}$? We will build  $\phii\in L^\infty(\CC^2\to \CC^2)$ as the following. Remark that $G(e^{it})^*G^{-1}(e^{it})$ is  diagonal. The diagonal terms are $e^{-\frac{1}{2}it}$ and $-e^{\frac{1}{2}it}$. So, $G^*G^{-1}$ lies in $L^\infty (\CC^2\to\CC^2)$ and then
	\begin{equation}\label{arg}
	T_{\bar{z} G(z)^*G(z)^{-1}}=p_+  \begin{pmatrix}  z^{-\frac{3}{2} }  & 0 \\ 0 & -z^{-\frac{1}{2}}   \end{pmatrix} \ \ a.e.\ z\in \TT.
	\end{equation}
	Every $f$ in $\FF$ is of the form $f=G e$, with $e\in\CC^2$, and   $T_{\bar{z} G^* G^{-1}} f=p_+(\bar{z} G^* e)=0$. So $\FF$ is the kernel of $T_{\bar{z} G^* G^{-1}}$.
\end{ex}
	
\begin{ex}
	We modify the previous example to get a nearly $S^*$-invariant subspace which is not the kernel of a Toeplitz operator. Let $\FF$ and $G$ be defined by 
	 \[ \FF=\left\{\left(\begin{array}{c} a(1+z)  \\ b(1-z)  \end{array}\right) (a,b)\in \CC^2 \right\} \mbox{ and } G(z)=\frac{1}{\sqrt{2}}\begin{pmatrix}   1+z   & 0 \\ 0 &  1-z    \end{pmatrix}.\]
	  We will show that if it is the kernel of a Toeplitz operator,  it is also the kernel of the Toeplitz operator with symbol $\phii(e^{it}):= G^*(e^{it})U^*(e^{it})G^{-1}(e^{it})$. This symbol is a diagonal matrix. The diagonal terms are $e^{-2it}$ and  $-1$ a.e.\ $e^{it}\in\TT$. But 
	 \[ \ker T_\phii=\left\{\left(\begin{array}{c} a+bz  \\ 0 \end{array}\right)\ : \ (a,b)\in \CC^2 \right\}\not=\FF,\]
	  and so $\FF$ fails to be the kernel of a Toeplitz operator. 
 \end{ex}
	
\begin{ex}
	 Let $r=1$, $G=[g_1]$ and $\dim\, K_U=2$. Let $\FF$ be defined by
	 \[ \FF:= \left\{\left(\begin{array}{c} a+bz \\ 0 \end{array}\right)\, :\, (a,b)\in\CC^2\right\}.\] 
	 This space is the nearly $S^*$-invariant $\FF=GK_U$ with   $G(z)=\left(\begin{array}{c} 1 \\ 0 \end{array}\right)$ and $U(z)=z^2.$ 
	With the  notation defined in section \ref{innerouter}, we have
	\[
	\GG:=\span{S^k G}=\begin{pmatrix}1&0\\ 0&0\end{pmatrix} H^2(\CC^2), \ \Theta_0=\begin{pmatrix}1 \\  0\end{pmatrix}, \ \mbox{ and }  \GGG=\begin{pmatrix}\CC \\  0\end{pmatrix}.
	\]
	Because $\GG=H^2(\GGG),$ the function $G$ is outer.  As we saw before, $\tilde{G}$  is the $(1\times 1)$-square matrix  such that $G=\Theta_0 \tilde{G}$. Here,  $\tilde{G}=1$. Furthermore, $\FF=\ker T_\phii$ where  
	\[
	\phii:=\begin{pmatrix} \tilde{G}^* U^*  \tilde{G}^{-1} & 0 \\ 0 & 1 \end{pmatrix}=\begin{pmatrix}  \bar{z}^2 & 0 \\ 0 & 1 \end{pmatrix}.
	\]
 This example illustrates the problem  of dimensions : the interesting part lies in $\GGG$ which is a subspace of $\CC^m$.
\end{ex}

\subsection{The case  $r=m$}
This section treats the particular case where $r=m$ seen in the Example~\ref{ex1}.

First of all, a nearly $S^*$-invariant subspace $\FF$ which is the kernel of a Toeplitz operator $T_\phii$ has the form $\FF=T_G K_U$ with $G$ outer. Remember that the columns of $G\in H^2(\CC^m\to\CC^m)$ form an orthogonal basis of $W:= \FF\cap \left\{ \FF  \cap zH^2(\CC^m) \right\}^\perp$. Because $\det G\not\equiv 0$, the inner-outer factorization for matrix-valued functions allows us to factorize with an inner function on the right. Let $G_o$ be outer, and $G_i$ be inner, such that $G=G_o G_i$.  Because $U(0)=0$, it follows $K_U$ contains $\CC^m$ and $G\CC^m\subset \ker T_\phii$. For all $e\in \CC^m$, we have $Ge\in \ker T_\phii$. So, there exists $H\in H^2(\CC^m\to \CC^m)$ such that $\phii G=\bar{z} H^*$. But $G=G_o G_i$, then $\phii G_o G_i = \bar{z} H^*$ and $\phii G_o = \bar{z} H^* G_i^*$. Finally, $T_{\phii}( G_o e)=0$, which implies that the columns of $G_o$   form an orthonormal basis of $W$. A nearly $S^*$-invariant subspace  $\FF$ which is the kernel of a Toeplitz operator is of the form $G K_U$, with $G$ outer.

Following Sarason, the first step is to understand what happens if $\dim \ker \FF=m=r$.
A new notion, namely the rigid functions, appears to characterize when $\FF$ is the kernel of a Toeplitz operator. A scalar function $f\in H^1$ is said to be \textit{rigid} if the only functions in $H^1$ which have the same argument are of the form $cf$ with $c$ a non-negative constant. 
 When $\det(F)\not=0$, we write $(R_F,A_F)$ for the polar decomposition of the matrix $F=R_F A_F$. The matrix $R_F$ is positive and $A_F$ is unitary. The matrix $A_F$  is called the \textit{argument} of $F$.

\begin{defn}
Let $F\in H^1(\CC^m\to \CC^m)$  be a square matrix-valued function. Then $F$  is \textit{rigid} if the only functions which have the same argument $A_F$ are of the form $RF$, where $R$ is a constant hermitian positive matrix.
\end{defn}

In fact, we can show that rigid functions are exactly the exposed points of the unit ball of $H^1(\CC^m\to \CC^m)$. We recall the definition. Let $X$ be a Banach space and $A$ a closed subset of $X$. A point $x\in A$ is an \textit{exposed point } of $A$ if there exists $L\in X^*$ such that $L(x)=1$ and $\Re(L(y))<1$ for all $y\in A\setminus\{x\}$. The functional $L$ associated to $x$ is unique. In our case, $L$ is:
\[
\begin{array}{rcl}
L:H^1(\CC^m\to\CC^m)&\longrightarrow & \CC \\
H&\longmapsto & \text{tr} \left( \dfrac{1}{2\pi} \int_0^{2\pi} H(e^{it})A_{F}(e^{it}) \ dt\right),
\end{array}
\]
where $\text{tr}$ denote the trace.  It is easy to verify that $L(F)=1$ and that $F$ is rigid if and only if $L$ is unique.  

Moreover, exposed points are extreme points (in the sense of  convexity). The extreme points of the unit ball of $H^1(\CC^m\to\CC^m)$ are the outer functions with norm 1. For more results see \cite{cambern} and \cite{beneker}.
	Before stating the next lemma, we define precisely the norm $\tn \cdot \tn$ which we shall use. Let $(e_k)_k$ be the canonical basis of $\CC^m$. Then $\tn \cdot \tn$  is the matricial norm defined by $\tn  A\tn^2:= \sum_{k=1}^m \< Ae_k,A e_k\>_{\CC^m}$.

	\begin{lem}\label{casm=r}
	Let $G\in H^2(\CC^m\to\CC^m)$ and $\FF=T_G \CC^m$ be a nearly $S^*$-invariant subspace of $H^2(\CC^m\to\CC^m)$. If $\FF$ is the kernel of a Toeplitz operator, then $G^2$ is rigid and $\FF=\ker\ T_{\bar{z} G^*G^{-1}}.$ 
	\end{lem}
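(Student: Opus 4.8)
The plan is to follow Sarason's scalar scheme, replacing the pointwise modulus by the matrix modulus and the unimodular argument by the matrix argument $A_{G^2}$, while being careful that $\phii$ and the candidate symbol $\phi_0:=\bar z\,G^*G^{-1}$ need not be comparable as matrices. Since $\FF$ is the kernel of a Toeplitz operator, the discussion preceding the lemma shows $G$ is outer, so $G^{-1}\in\NNN^+(\CC^m\to\CC^m)$, $\det G$ is outer, and $G$ is invertible a.e.\ on $\TT$. Because $U(0)=0$ forces $\CC^m\subset K_U$, every $Ge$ (with $e\in\CC^m$) lies in $\FF=\ker T_\phii$; hence $p_+(\phii Ge)=0$ for all $e$, and by linearity there is $H\in H^2(\CC^m\to\CC^m)$ with $\phii G=\bar z\,H^*$, i.e.\ $\phii=\bar z\,H^*G^{-1}$. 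The first, easy, inclusion is then $\FF\subset\ker T_{\phi_0}$: for $e\in\CC^m$ one computes $T_{\phi_0}(Ge)=p_+(\bar z\,G^*e)=0$, since $G^*e$ is anti-analytic.

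The reverse inclusion $\ker T_{\phi_0}\subset\FF$ comes from the boundedness of the genuine symbol $\phii$ and, notably, does not require rigidity. If $f\in\ker T_{\phi_0}$, then $\phi_0 f=\bar z\,G^*G^{-1}f\in\overline{zH^2(\CC^m)}$, so $G^*G^{-1}f=\bar v$ with $v\in H^2(\CC^m)$ and hence $G^{-1}f=(G^*)^{-1}\bar v$. Writing $\phii f=\phi_0 f+\bar z\,(H-G)^*G^{-1}f$, the extra term equals $\bar z\,(H-G)^*(G^*)^{-1}\bar v$, which is $\bar z$ times an anti-analytic function; it lies in $L^2$ (as $\phii\in L^\infty$ and $f\in H^2$), and by the Smirnov fact $\NNN^+\cap L^2=H^2$ it belongs to $\overline{zH^2(\CC^m)}$, so $p_+$ annihilates it. Thus $p_+(\phii f)=p_+(\phi_0 f)=0$, i.e.\ $f\in\ker T_\phii=\FF$. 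Together with the first inclusion this gives $\FF=\ker T_{\phi_0}$, the second assertion of the lemma, the operator $T_{\phi_0}$ being understood, as for $T_G$ earlier, on its natural domain.

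It remains to prove that $G^2$ is rigid, and here I would argue by contraposition: if $G^2$ were not rigid I would manufacture an $f\in\ker T_{\phi_0}$ with $G^{-1}f$ non-constant, contradicting the equality just obtained. Non-rigidity provides $K\in H^1(\CC^m\to\CC^m)$ sharing the argument $A_{G^2}$ of $G^2$ but with $K\neq RG^2$ for every constant hermitian $R\geq0$. The scalar model is instructive: one builds the outer $F$ with $|F|^2=|K|$, sets $W:=K/F$, checks $|F|=|W|$, and uses the common argument to see that $F=g\bar W/\bar g\in\ker T_{\bar z\bar g/g}$ while $|F|=\sqrt{|K|}$ is not a constant multiple of $|g|$, so $F\notin\CC g$. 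The matrix analogue requires an equal-modulus factorization $K=F\,W$ with $F,W\in H^2(\CC^m\to\CC^m)$, chosen so that the columns of $F$ obey the kernel description $F=G\bar V$ with $G^{T}V\in H^2$; one then shows $F\neq GC$ for any constant matrix $C$, precisely because $K\notin\RR_{+}G^2$.

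I expect the construction of this equal-modulus (symmetric outer) factorization to be the main obstacle. Unlike the scalar case, the matrix modulus $(K^*K)^{1/2}$ does not commute with the argument, so splitting $K$ into two $H^2$ factors of equal modulus carrying the prescribed analytic structure $G\bar V$ must be carried out through the matrix outer function attached to the positive weight congruent to $G^*(\cdot)G$, invoking $\log\det$-integrability of $K$ and again the identity $\NNN^+\cap L^2=H^2$ to upgrade the resulting relations from the Smirnov class to $H^2(\CC^m)$. Once a non-constant $V$ is produced the contradiction closes, and rigidity of $G^2$ follows.
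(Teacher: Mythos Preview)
Your identification $\FF=\ker T_{\bar z G^*G^{-1}}$ is correct and takes a genuinely different route from the paper. The paper first shows that $H$ (defined by $\phii G=\bar z H^*$) is outer, then proves $G^{-1}H\in H^\infty(\CC^m\to\CC^m)$ with $\tn G^{-1}H\tn=\tn\phii\tn$ a.e., and factors $T_\phii=T_{(G^{-1}H)^*}\,T_{\bar z G^*G^{-1}}$; since $G^{-1}H$ is outer, the left factor is injective and the two kernels coincide. You bypass outerness of $H$ altogether: for $f\in\ker T_{\phi_0}$ you compare $\phii f$ and $\phi_0 f$ directly and push the difference $\bar z(H-G)^*(G^*)^{-1}\bar v$ into $\overline{zH^2(\CC^m)}$ via the Smirnov identity $\NNN^+\cap L^2=H^2$. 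This is shorter, but it discards the structural fact that $G^{-1}H$ is a bounded outer multiplier, which the paper reuses verbatim in the proof of Lemma~\ref{casgeneralr=m}.

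The rigidity of $G^2$, however, is not proved. You state the contrapositive strategy and review the scalar model, but your last two paragraphs are an announcement of what remains (``I expect the construction of this equal-modulus factorization to be the main obstacle'', ``once a non-constant $V$ is produced the contradiction closes'') rather than a construction. Nothing in what you wrote actually produces the matrix $F$ with columns in $\ker T_{\phi_0}\setminus G\CC^m$. The paper closes this gap as follows: given $J\in H^1(\CC^m\to\CC^m)$ with the same argument as $G^2$, one first reduces to $J$ outer (via $J_i^*(Id\pm J_i)^2J$), then takes $J_1:=J^{1/2}$, the analytic square root, which carries argument $A_G$; from $J_1^*J_1^{-1}=G^*G^{-1}$ a.e.\ on $\TT$ every column $J_1u$ lands in $\ker T_{\bar z G^*G^{-1}}=G\CC^m$, forcing $J_1=GC$ for a constant matrix $C$, contradicting the choice of $J$. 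Whether you follow this square-root route or insist on your equal-modulus factorization, some explicit construction must be supplied; as it stands your proposal stops precisely at the step that carries the content.
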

	
	\begin{proof}
		Let $\phii$ be the  symbol of the Toeplitz operator for which $\FF$ is the kernel. Because $G\in \ker T_{\phii}$, there exists  $H\in  H^2(\CC^m\to\CC^m)$ such that $ \phii G=\bar{z} H^*$.
	We begin by showing that $H$ is outer. Let $H=VH_o$ the inner-outer decomposition of $H$. Then  $\phii G=\bar{z} H_o^* V^*$, so $\phii GV =\bar{z} H_o^*$. Finally, $GV\in \ker T_\phii$, but $\ker T\phii=G\CC^m$, so $V$ is constant.
	
	We would like to write $T_\phii$ as the product of two Toeplitz operators, such that the first one is injective and the symbol of the second one depends only on $G$. Because $G\in H^2(\TT,\CC^m\to\CC^m)$ is outer, $\det\ G(z)$ is a outer scalar function in $\NNN^+$ and for all $z\in\DD$, the inverse $G(z)^{-1}$ has a sense. The function $G^{-1}$ lives in $\NNN^+$. Now, we consider the polar decomposition of $G(z)=A_G(z)R_G(z)$ with $R_G(z)=(G(z)^*G(z))^{1/2}$  positive-definite hermitian   and $A_G(z)$ unitary. 
	We have
	\[
	G(z)^{-1}=A^*(z) R(z)^{-1}\mbox{ and }  G(z)^*=A^*(z) R(z).
	\]
	 Because $G$ is outer, $\GG=H^2(\CC^m)$. We can say that $G H^\infty(\CC^m)$ is dense in $\GG$. Let $f=Gf'\in G H^\infty(\CC^m)$. We define $\psi\in L^\infty(\TT,\CC^m\to \CC^m)$ by 
	\[
	\psi =\bar{z} H^* (G^{-1})^*	G^* G^{-1},
	\]
	and then $T_\psi f$ is exactly $T_{(G^{-1}H)^*} T_{ G^*\bar{z} G^{-1}}f$. We extend by continuity from\\ $G H^\infty(\CC^m)$ to $H^2(\CC^m)$.
	
	Because $G$ and $H$ are outer,  $G^{-1}$  and $G^{-1}H$ are outer in  $\NNN^+(\CC^n\to\CC^m)$. But $\phii G=\bar{z} H^*,$ so we have $\phii(z)=\bar{z} H^*(z) G(z)^{-1}$. So, for almost every  $e^{it}\in \TT$, the norm 	$\tn G(e^{it})^{-1} H(e^{it}) \tn$  is given by:
	\[
	\begin{aligned}
	\tn G(e^{it})^{-1} H(e^{it}) \tn  &= \tn H(e^{it}) H(e^{it})^{*-1} H^*(e^{it}) G(e^{it})^{-1}\tn \\
	&= \tn A_H(e^{it})^2 \ e^{it} \phii(e^{it})\tn \\
	&=\tn \phii (e^{it})\tn .
		\end{aligned}
	\]
	 This proves that $G^{-1} H \in H^\infty(\CC^m\to\CC^m)$ is outer with the same norm as $\phii$.  So, the kernel of the Toeplitz operator with symbol $G^{-1} H$ is trivial. By construction, $\ker T_{\bar{z} G^*G^{-1}}=\FF$.
	 
	 We shall prove that $G^2$ is rigid. 
	 
	 Let  $J$ be a function with the same argument as $G^2$. We can suppose that it is outer, because if not then we can build an outer function with the same argument. Indeed, if  $J=J_i J_e$, then $-(Id+J_i)^2(Id-J_i^*)^2=2Id-J_i^2-J_i^{*2}$ is positive and $J_i^{*}(Id+J_i)^2J$ or $-J_i^{*}(Id-J_i)^2J$ is outer with the same argument as $G^2$.
	 	  
	 	  Let $J_1=J^{1/2}=A_F(z)R_J(z)^{1/2}.$ The hypothesis on $J$ implies that $R_J\not\equiv R_{G^2}$, so it is the same with $F_1$ and $G$. Because $J_1^* J_1^{-1}=A_G^2=G^*G^{-1}$ for a.e.\ $e^{it}\in\TT$,  then for all $u\in\CC^m$, $T_{\bar{z} G^* G^{-1}}F_1u=0$. But $\ker T_{\bar{z} G^* G^{-1}}=G\CC^m$, so $J_1u\in G\CC^m$. The contradiction follows. By hypothesis, $F_1$ is not a  multiple of $G$ by a constant matrix.
	 	  	
	\end{proof}
	
The following consequence is an interesting characterization of the rigid functions. It is the matricial analogue of  a result of Sarason (\cite{sarason:book} chapter X page 70) used to show that $1+z$ is rigid.
	
	\begin{prop}\label{injectif}
	If $F\in H^2(\CC^m\to \CC^m)$ is outer, then $F^2$ is rigid  if and only if the Toeplitz operator   $T_{F^*F^{-1}}$ has a trivial kernel.	
	\end{prop}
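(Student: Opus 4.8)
The plan is to transcribe Sarason's scalar argument (\cite{sarason:book}, Chapter~X) and to prove both implications by contraposition, the engine being the computation
\[
T_{F^*F^{-1}}(Fe)=p_+(F^*F^{-1}Fe)=p_+(F^*e)=F(0)^*e\qquad(e\in\CC^m),
\]
which is legitimate because $F^*\in\overline{H^2(\CC^m)}$ and, $F$ being outer, $\det F$ is outer in $\NNN^+$, so $F(z)$ is invertible on $\DD$, $F^{-1}\in\NNN^+(\CC^m\to\CC^m)$ and $F(0)$ is invertible (Theorem~3.1 of \cite{katsnelson}). Thus multiplication by $F$ carries constant vectors to functions on which $T_{F^*F^{-1}}$ acts as the constant matrix $F(0)^*$; in particular $F\CC^m$ meets $\ker T_{F^*F^{-1}}$ trivially.

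For the implication ``$F^2$ not rigid $\Rightarrow T_{F^*F^{-1}}$ not injective'' I would start from a competitor $J\in H^1(\CC^m\to\CC^m)$ sharing the argument $A_{F^2}$ of $F^2$ but not of the form $RF^2$ with $R$ constant positive hermitian. As in the proof of Lemma~\ref{casm=r}, I first reduce to $J$ outer by multiplying out its inner factor with the $(Id\pm J_i)$-trick. Mimicking the scalar step, I would extract from $J$ an outer matrix $H$ tied to $F$ through the argument relation in such a way that $F^*F^{-1}H-H^*\in\overline{zH^2}$, whence $T_{F^*F^{-1}}H=H(0)^*$ columnwise. Choosing constant vectors with $F(0)^*a=H(0)^*b$ (possible since $F(0)$ is invertible) and setting $g:=Fa-Hb$ produces an element of $\ker T_{F^*F^{-1}}$; it is nonzero because $H$ is not a constant-matrix multiple of $F$, the latter being exactly the degenerate case $J=RF^2$ excluded by non-rigidity.

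For the converse, ``$T_{F^*F^{-1}}$ not injective $\Rightarrow F^2$ not rigid'', I would take $0\neq g$ with $F^*F^{-1}g\perp H^2(\CC^m)$, write $F^*F^{-1}g=\bar z\,\overline{w}$ with $w\in H^2(\CC^m)$, $w\neq0$, and manufacture from the triple $(F,g,w)$ a matrix $\Phi\in H^1(\CC^m\to\CC^m)$ having the argument $A_{F^2}$; the scalar template is $\Phi=zwg$, a nonnegative density times the argument of $f^2$. The extra factor $z$ forces $\Phi(0)=0$, while $F^2(0)=F(0)^2$ is invertible, so $\Phi$ cannot equal $RF^2$, which exhibits a second function with the argument of $F^2$ and defeats rigidity. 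Equivalently, such a $g$ should be read as a second support functional for $F^2$, that is, as non-uniqueness of the functional $L$ in the exposed-point description preceding the statement.

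The crux, and the one place where the scalar proof does not copy over, is non-commutativity. In the scalar case $F^*F^{-1}=\bar f/f$ is precisely the conjugate of the argument of $f^2$, so ``same argument'' is a symbol identity and every manipulation is a cancellation; for matrices one checks that $F^*F^{-1}$ is \emph{not} the conjugate argument $A_{F^2}^*$ in general (already $R_{F^2}=(F^2F^{2*})^{1/2}\neq FF^*$), and complex conjugation turns the analytic factors into anti-analytic transposes, so the algebra collapses. I therefore expect the real work to be (i)~producing the outer matrix $H$ (an outer ``symmetric square root'' compatible with the argument), which I would obtain from the matrix inner--outer factorization of \cite{katsnelson}, and (ii)~assembling a genuine full-rank matrix competitor $\Phi$ from possibly one-dimensional vector data $g,w$ and verifying that $\Phi A_{F^2}^*$ is hermitian and nonnegative. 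Controlling this positivity, rather than any single identity, is the main obstacle.
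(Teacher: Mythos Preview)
Your plan is honest about its gaps, but it is working far harder than necessary, and by a different route than the paper.

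The paper's proof is two lines. It does not attack rigidity directly at all; instead it observes the operator identity $T_{\bar z F^*F^{-1}}=S^*T_{F^*F^{-1}}$, so that (using $\ker S^*=\CC^m$ and your own computation $T_{F^*F^{-1}}(Fe)=F(0)^*e$ with $F(0)$ invertible) one has
\[
\ker T_{F^*F^{-1}}=\{0\}\quad\Longleftrightarrow\quad \ker T_{\bar z F^*F^{-1}}=F\CC^m\quad\Longleftrightarrow\quad \dim\ker T_{\bar z F^*F^{-1}}=m.
\]
The right-hand condition says precisely that the nearly $S^*$-invariant subspace $F\CC^m$ is itself the kernel of a Toeplitz operator, and this has already been tied to rigidity of $F^2$ in Lemma~\ref{casm=r}. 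So the proposition is a corollary of Lemma~\ref{casm=r}, not an independent argument.

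What you are proposing is, in effect, to reprove Lemma~\ref{casm=r} inside this proposition, and then also to supply the converse direction from scratch. Your first implication (``not rigid $\Rightarrow$ nontrivial kernel'') is exactly the rigidity paragraph in the proof of Lemma~\ref{casm=r}: the competitor $J$, its outer reduction, the square root $J_1$, and the observation that $J_1 e$ lands in $\ker T_{\bar z F^*F^{-1}}$. You should simply quote that argument rather than redo it. For the second implication, your template $\Phi=zwg$ is correct in spirit, but the matricial positivity and full-rank issues you flag are genuine: a single kernel vector $g$ gives rank-one data, and $F^*F^{-1}$ is not $A_{F^2}^*$ in general, so ``same argument'' is not a pointwise identity of symbols. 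The paper bypasses this entirely by going through the dimension count above; once $\dim\ker T_{\bar z F^*F^{-1}}>m$, the space $F\CC^m$ is \emph{not} a Toeplitz kernel, and the contrapositive of Lemma~\ref{casm=r} does the rest.

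In short: keep your opening computation, then invoke $T_{\bar z\psi}=S^*T_\psi$ and Lemma~\ref{casm=r}. The direct construction of a competitor $\Phi$ from kernel data is unnecessary here and, as you correctly diagnose, does not transcribe cleanly to matrices.
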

	
	\begin{proof}
	Because $\ker \ S^*=\CC^m$ and $T_{\bar{z} F^*F^{-1}}=S^* T_{F^*F^{-1}}$, then $\ker \ T_{F^*F^{-1}}$ is trivial          is equivalent to $\dim \ker \ T_{\bar{z} F^*F^{-1}}=m$. But, thanks to the previous lemma, this is true only if $F^2$ is rigid.
		
	\end{proof}
	
	Here is the matricial analogue of Lemma 1 of \cite{sarasontoeplitz} page 161 for an outer square matrix $G$.  
	\begin{lem}\label{casgeneralr=m}
	 Let $G\in H^2(\CC^m\to \CC^m)$ be outer and let $U\in H^\infty(\CC^m\to\CC^m)$ be inner vanishing at zero. Let $\FF=T_G K_{U}$ be the kernel of a Toeplitz operator. Then it is the kernel of $T_{G^*U^* G^{-1}}$.	
	\end{lem}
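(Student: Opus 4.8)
The plan is to establish the two inclusions $\FF\subseteq\ker T_{G^*U^*G^{-1}}$ and $\ker T_{G^*U^*G^{-1}}\subseteq\FF$ separately, treating Lemma~\ref{casm=r} (the case $U=zId_m$) as the model for the argument. Throughout, since $G$ is outer, $G^{-1}\in\NNN^+(\CC^m\to\CC^m)$, so the symbol $\Phi:=G^*U^*G^{-1}$ is a priori only of Smirnov type; I would define $T_\Phi$ first on the dense set $GH^\infty(\CC^m)$ and read all symbol manipulations in that sense, exactly as in the proof of Lemma~\ref{casm=r}. I also write $U=zU_1$ with $U_1\in H^\infty(\CC^m\to\CC^m)$ inner, which is legitimate because $U(0)=0$.

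For the first inclusion, take $f=T_Gf_0$ with $f_0\in K_U$. Using $G^{-1}G=Id_m$ one gets $T_\Phi f=p_+(G^*U^*f_0)$. Since $f_0\in K_U$, the function $U^*f_0$ is orthogonal to $H^2(\CC^m)$, hence has strictly negative Fourier spectrum; as $G^*$ is co-analytic, the product $G^*U^*f_0$ still has strictly negative spectrum, so its analytic projection vanishes and $T_\Phi f=0$. This inclusion does not use the hypothesis that $\FF$ is a Toeplitz kernel.

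The hypothesis enters in the reverse inclusion. Let $\phi\in L^\infty$ satisfy $\ker T_\phi=\FF$. Because $U(0)=0$ gives $\CC^m\subseteq K_U$, we have $G\CC^m\subseteq\FF$, so $\phi Ge\perp H^2(\CC^m)$ for every constant $e$; hence $\phi G=\bar z\hat H^*$ for some $\hat H\in H^2(\CC^m\to\CC^m)$ and, $G$ being outer, $\phi=\bar z\hat H^*G^{-1}$. A short computation shows that $T_\phi(Gq)=0$ exactly when $q\perp z\hat H H^2(\CC^m)$, so $\ker T_\phi=GK_{U'}$, where $U'$ is the inner factor in the factorization $z\hat H=U'E$ (with $E$ outer, available since $\det\hat H\not\equiv0$). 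Comparing with $\ker T_\phi=GK_U$ forces $U'=U$, whence $\hat H=U_1E$ and
\[
\phi=\bar z E^*U_1^*G^{-1}=(G^{-1}E)^*\,\Phi .
\]
Now $N:=G^{-1}E$ is a product of outer functions, hence outer, so $N^*$ is co-analytic. Consequently, if $T_\Phi f=0$, then $\Phi f$ has strictly negative spectrum, so $\phi f=N^*\Phi f$ does too, giving $T_\phi f=0$; thus $\ker T_\Phi\subseteq\ker T_\phi=\FF$. Combined with the first inclusion this yields $\FF\subseteq\ker T_\Phi\subseteq\FF$, i.e.\ equality.

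The main obstacle is the rigor behind these spectral computations. In the scalar case the analogous symbol $\bar u\bar g/g$ is unimodular, hence bounded, and no care is needed; here $\Phi=G^*U^*G^{-1}$ need not even be unitary a.e.\ (that would require $U$ to commute with $GG^*$, i.e.\ $G$ essentially normal), so one must justify both that $T_\Phi$ is a genuinely defined operator and that the passage from $GH^\infty(\CC^m)$ to all of $H^2(\CC^m)$ is legitimate. This is precisely where the hypothesis that $\FF$ is an honest bounded-symbol Toeplitz kernel is used: it forces the factor $E$ above to be outer and $N=G^{-1}E$ to be outer, which is what makes $N^*$ co-analytic and the inclusion $\ker T_\Phi\subseteq\ker T_\phi$ valid. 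As in Lemma~\ref{casm=r}, a polar-decomposition estimate controlling $\tn\Phi\tn$ in terms of $\tn\phi\tn$, together with the $\NNN^+$ inner--outer factorization theorems of \cite{katsnelson}, supplies the required boundedness and continuity.
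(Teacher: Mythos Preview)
Your outline is the paper's own: write $\phi G=\bar z\hat H^*$, identify the inner part of $z\hat H$ with $U$, and factor $\phi=N^*\Phi$ with $N=G^{-1}E$ outer so that $\ker T_\Phi=\ker T_\phi$. Your direct verification of $\FF\subseteq\ker T_\Phi$ and your identification $U'=U$ via injectivity of multiplication by $G$ are fine and essentially equivalent to the paper's two inclusions $K_U\subset K_{H_i}\subset K_U$.

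The genuine gap is in your last paragraph, and it is not a matter of rigor but of aiming at the wrong object. The ``polar-decomposition estimate'' from Lemma~\ref{casm=r} does \emph{not} bound $\tn\Phi\tn$; in fact $\Phi=G^*U^*G^{-1}$ need not lie in $L^\infty$ at all (its pointwise norm involves $R_G$ and $R_G^{-1}$ and is generically unbounded unless $U$ commutes with $GG^*$). What the argument actually requires, and what the paper proves, is that $N=G^{-1}E\in H^\infty$. Only then is $T_{N^*}$ a bounded co-analytic Toeplitz operator with trivial kernel, so that $T_\phi=T_{N^*}T_\Phi$ on $GH^\infty(\CC^m)$ yields $\ker T_\Phi\subset\ker T_\phi$. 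Without $N\in H^\infty$, your sentence ``$\phi f=N^*\Phi f$ has negative spectrum'' is unjustified: $N^*\Phi f$ may fail to be integrable, and the spectral calculus collapses.

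The paper obtains $N\in H^\infty$ in two steps you omit. First, it shows $\log\tn\phi\tn\in L^1(\TT)$ (from $\phi G=\bar z\hat H^*$ with $\hat H\in H^2$) and uses this to build an outer $\psi\in H^\infty$ with $\tn\psi\tn=\tn\phi\tn$ a.e.; replacing $\phi$ by $\psi^{-*}\phi$ reduces to the case where $\phi$ takes values of norm~$1$. Second, with this normalization and $H=H_iH_o=UH_o$, the pointwise identity
\[
\tn G^{-1}H_o(e^{it})\tn=\tn A_H(e^{it})^2\,e^{it}\phi(e^{it})\tn=\tn\phi(e^{it})\tn
\]
(using $H_o=A_H^*H$ and $\phi=\bar z H^*G^{-1}$) gives $N=G^{-1}H_o\in L^\infty\cap\NNN^+=H^\infty$. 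You should replace your final paragraph by these two steps; after that the remainder of your argument goes through exactly as written.
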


	\begin{proof}	This proof follows Sarason's. The fact that $\FF$ is the kernel of a Toeplitz  $T_\phii$ allows us to show that $G^*U^*G^{-1}$ defines a symbol in $L^\infty(\TT,\CC^m\to\CC^m)$. We begin the proof by building an outer function with the same norm as $\phii$, then we can suppose that $\phii$ takes values in the set of norm-$1$ matrices. Let $(e_k)_{1\leq k \leq m}$ the canonical basis of $\CC^m$.
	
 Because $p_+(\phii g_k)=0$, there exists $h_k\in H^2(\CC^m)$ such that $\phii G e_k=\bar{z} \overline{h_k}$ for $k\in\{1,\dots,m\}$.
The norm	$\tn \phii(e^{it})G(e^{it})\tn^2$ is equal to $\sum_{k=1}^m\<h_k,h_k\>$, so  $\log \tn \phii(e^{it})G(e^{it})\tn =\dfrac{1}{2} \log \left(\sum_{k=1}^m\<h_k,h_k\>\right).$

	But the function  $e^{it}\mapsto \sum_{k=1}^m\<h_k(e^{it}),h_k(e^{it})\>$ is in $H^1$, so it is $\log$-integrable, just as  $e^{it}\mapsto\tn \phii(e^{it})G(e^{it})\tn $. 
	We deduce from \[\tn \phii(e^{it})G(e^{it})\tn \leq \tn \phii(e^{it})\tn \tn G(e^{it})\tn\] that  $e^{it}\mapsto \tn \phii(e^{it})\tn $ is $\log$-integrable.
	Via the Poisson kernel (cf \cite{katsnelson}), we build $\psi \in H^\infty(\CC^m\to\CC^m)$ outer  with the same norm:
	\[
	\psi(z):=\exp \left(\dfrac{1}{2\pi} \int_\TT \dfrac{e^{it}+z}{e^{it}-z}\ \log\, \left( \phii^*(e^{it}\phii^*(e^{it})\right)^{1/2}\ dt\right).
	\]
	Because $\psi$ is outer, so is its determinant and the matrix $\psi(z)$ is invertible  for $z\in \DD$ and the  inverse is in $\NNN^+$. Then the radial limits exist almost everywhere and $\psi^{-1*}\phii \in L^\infty(\TT,\CC^m\to \CC^m)$. The values $\psi^{-1*}(e^{it})\phii(e^{it})$ are matrices with norm 1 for a.e.\ $e^{it}\in \TT$.

	So, there exists $\chi \in L^\infty(\TT,\CC^m\to\CC^m)$ which takes  values in the set of norm-$1$ matrices  such that $\phii=\psi^* \chi$. But, $\ker T_{\psi^*}$ is trivial, so $\ker T_\phii=\ker\ T_\chi$ and even if we replace $\phii$ by $\chi$, we can suppose that $\phii$ take values that are matrix of norm 1.
	
	Because $p_+(\phii G)=0,$ there exists $H\in H^2(\CC^m\to \CC^m)$ such that $\phii G= H^*$ and $H(0)=0$. 
	
	We note $H=H_i H_o$, and we will show  that $H_i=VU$ with $V$ a unitary constant matrix.
	
	Now, we prove that $U$ divides $H_i$. For all $ h\in K_U,$ $Gh\in \ker  T_\phii$  implies that $\phii G h$, which is equal to $H_o^* H_i^* h$, lies in $\overline{z H^2(\CC^m)}$. Because $H_o$ is outer,  $H_i^* h \in \overline{\; z \; H^2(\CC^m)}$ and then  $h$ is orthogonal to $H_i H^2(\CC^m)$. So, we have $K_U\subset K_{H_i}$.
	
	The converse inclusion holds: Let $h\in K_{H_i}$ be bounded. Then, there exists $H=H_i H_o\in H^2(\CC^m\to \CC^m)$ such that $T_\phii Gh=p_+(  H_o^* H_i^*h)$. Then for all $k\in H^\infty(\CC^m)$,  we have
\[  \< T_\phii Gh,k\>=\<  H_o^* H_i^* h,k\>=\<h,H_i H_o   k\>=0.\] 
Because the bounded functions are dense in $K_{H_i}$, we obtain that  $K_{H_i}\subset K_U$ and so $H_i=VU$ with $V$ unitary constant.
	
	As in the proof of Lemma \ref{casm=r}, we write $\phii =  H_o^* U^*G^{-1}$. Then \[ p_+(\phii f)=p_+(H_o^* (G^{-1})^*   G^* U^*G^{-1} f),\]  and $T_\phii =T_{(G^{-1}H_o)^*} T_{  G^* U^*G^{-1}}$.

	To conclude, we need to show that $ G^{-1}H_o $ lies in   $ H^\infty(\CC^m\to \CC^m)$.
	 The two functions $H_o$ and $G$ are outer, so $H_o^{-1}G $ is in the Nevanlinna-Smirnov class. 
	 To end the proof, we observe that $\tn G^{-1}H_o\tn=\tn \phii \tn$, which was done at the end of the proof of Lemma \ref{casm=r}. So  $\ker T_{(G^{-1}H_o)^*}$ is trivial, and $\ker T_\phii=\ker T_{{  G^* U^*G^{-1}}}$.

	\end{proof}

\subsection{The case $r<m$}

Let $G\in H^2(\CC^r\to \CC^m)$ be an outer function. With the notation of section \ref{innerouter}, we consider the space $\GG=H^2(\GGG)$, the unitary mapping  $\Theta_0: \CC^r \to \GGG$ and $G=\Theta_0 \tilde{G}$ with $\tilde{G}\in H^2(\CC^r\to \CC^r)$ outer.

Let $\Theta_1:\CCC^{m-r}\to \GGG^\perp$ be an unitary mapping. Then we decompose $H^2(\CC^m)$ as follows:
\[
H^2(\CC^m)=\GG\oplus\GG^\perp=H^2(\GGG) \oplus H^2(\GGG^\perp)=\begin{pmatrix}
\Theta_0 &0 \\ 0 & \Theta_1 
\end{pmatrix} \begin{pmatrix} H^2(\CC^r)\\ H^2(\CC^{m-r}) \end{pmatrix} .
\]
We denote $\Theta $ the $(m\times m)$-unitary matrix   with diagonal $\Theta_0,\Theta_1$.

	\begin{lem}\label{casr<m}
	Let $\FF=T_G \CC^r$ the kernel of a Toeplitz operator. We suppose that $\dim \FF=r$. Then $\tilde{G}^2\in H^1(\CC^r\to\CC^r)$ is a  rigid function and $\FF$ is the kernel of the Toeplitz operator with symbol
	\begin{equation}\label{eqphi}
	 \phi:=\Theta \begin{pmatrix} \bar{z} \tilde{G}^*\tilde{G}^{-1} & 0 \\ 0 & Id_{m-r} \end{pmatrix} \Theta^*.
	\end{equation}
	\end{lem}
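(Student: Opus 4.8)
The plan is to use the constant unitary $\Theta$ to localise the problem in the first $r$ coordinates and then to appeal to the square case already settled in Lemmas~\ref{casm=r} and~\ref{casgeneralr=m} applied to $\tilde G$. Since $\Theta$ is a constant unitary matrix it commutes with $p_+$, so $T_{\Theta^*\phii\Theta}=\Theta^*T_\phii\Theta$ for any symbol $\phii$. Writing $\Psi:=\Theta^*\phii\Theta$ where $\FF=\ker T_\phii$, and using $\Theta^*\Theta_0=\binom{Id_r}{0}$ (the ranges $\GGG$ and $\GGG^\perp$ being orthogonal), one finds $\ker T_\Psi=\Theta^*\FF=\tilde G\CC^r\oplus\{0\}\subset H^2(\CC^r)\oplus H^2(\CC^{m-r})$. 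The candidate symbol \eqref{eqphi} is conjugated to the block-diagonal $\Theta^*\phi\Theta=\mathrm{diag}\bigl(\bar z\,\tilde G^*\tilde G^{-1},\,Id_{m-r}\bigr)$, whose Toeplitz operator has kernel $\ker T_{\bar z\tilde G^*\tilde G^{-1}}\oplus\{0\}$. Conjugating back, the entire statement will follow once I show, on $H^2(\CC^r)$, that $\tilde G\CC^r$ is itself the kernel of a Toeplitz operator: Lemma~\ref{casm=r} applied to the square outer function $\tilde G$ then yields both that $\tilde G^2$ is rigid and that $\tilde G\CC^r=\ker T_{\bar z\tilde G^*\tilde G^{-1}}$, whence $\ker T_\phi=\Theta(\tilde G\CC^r\oplus\{0\})=\Theta_0\tilde G\CC^r=G\CC^r=\FF$.

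Everything thus reduces to one descent: from the fact that $\tilde G\CC^r\oplus\{0\}$ is a Toeplitz kernel in $H^2(\CC^m)$, produce a symbol on $\CC^r$ whose Toeplitz kernel is exactly $\tilde G\CC^r$. Writing $\Psi$ in blocks relative to $\CC^r\oplus\CC^{m-r}$, the inclusion $\tilde G\CC^r\oplus\{0\}\subset\ker T_\Psi$ gives $\binom{\Psi_{11}}{\Psi_{21}}=\bar z\,H^*\tilde G^{-1}$ for a single $H\in H^2(\CC^m\to\CC^r)$. I would then follow Lemma~\ref{casgeneralr=m}: reduce $\Psi$ to norm-$1$ values by dividing out an outer factor (which leaves the kernel unchanged), factor $H$, and aim to show that its inner part is controlled by $\tilde G$ alone, so as to obtain a factorization $\Psi_{11}=(\tilde G^{-1}H_o)^*\,\bigl(\bar z\,\tilde G^*\tilde G^{-1}\bigr)$ with $\tilde G^{-1}H_o$ outer. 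Injectivity of $T_{(\tilde G^{-1}H_o)^*}$ would identify $\ker T_{\Psi_{11}}$ with $\ker T_{\bar z\tilde G^*\tilde G^{-1}}$, and combining this with the second block relation $T_{\Psi_{21}}\tilde G=0$ would pin the common kernel down to $\tilde G\CC^r$.

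The main obstacle is exactly the coupling introduced by the $\GGG^\perp$-coordinates, i.e.\ the off-diagonal block $\Psi_{21}$, which has no counterpart when $r=m$. The diagonal block $\Psi_{11}$ sees only $\tilde G$, so its Toeplitz kernel equals $\ker T_{\bar z\tilde G^*\tilde G^{-1}}$, which \emph{a priori} may be strictly larger than $\tilde G\CC^r$ --- and excluding this is precisely the rigidity one is trying to prove. The extra constraint that cuts it back down comes from $\Psi_{21}$ through $\tilde G\CC^r=\ker T_{\Psi_{11}}\cap\ker T_{\Psi_{21}}$, and the difficulty is that an intersection of two Toeplitz kernels need not obviously be a single Toeplitz kernel, while the scalar commutation devices of Sarason do not survive the passage to matrices. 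The way I would resolve it is to carry out the (rectangular) inner--outer factorization of $H\in H^2(\CC^m\to\CC^r)$ along the lines of section~\ref{innerouter}, showing that its inner factor is governed by $\tilde G$, so that $\tilde G\CC^r$ is realised as a genuine Toeplitz kernel on $H^2(\CC^r)$. This rectangular bookkeeping, rather than the rigidity computation itself, is the technical heart of the proof, after which Lemma~\ref{casm=r} supplies both conclusions at once.
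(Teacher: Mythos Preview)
Your reduction via the constant unitary $\Theta$ is exactly the paper's approach, and you have correctly isolated the only non-trivial step: passing from ``$\tilde G\,\CC^r\oplus\{0\}$ is a Toeplitz kernel in $H^2(\CC^m)$'' to ``$\tilde G\,\CC^r$ is a Toeplitz kernel in $H^2(\CC^r)$'', which is the hypothesis needed to invoke Lemma~\ref{casm=r}. The paper's own proof is terse at precisely this point: it writes ``we verify that $\ker T_\phii=\tilde\FF$'' with $\phii=p_r\Theta^*\phi\Theta p_r=\bar z\,\tilde G^*\tilde G^{-1}$, but that equality is the \emph{conclusion} of Lemma~\ref{casm=r} rather than something one can check beforehand, so you are not overlooking an argument the paper actually carries out.

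Your proposed resolution, however, does not close the gap. From $\binom{\Psi_{11}}{\Psi_{21}}\tilde G=\bar z\,H^*$ with $H\in H^2(\CC^m\to\CC^r)$, the factorization $\Psi_{11}=(\tilde G^{-1}H_o)^*\bigl(\bar z\,\tilde G^*\tilde G^{-1}\bigr)$ would require the top $r\times r$ block of $H^*$ to be the adjoint of an \emph{outer} square matrix, and a rectangular inner--outer factorization of $H$ gives no such control on individual blocks. Even if you obtained $\ker T_{\Psi_{11}}=\ker T_{\bar z\tilde G^*\tilde G^{-1}}$, intersecting with $\ker T_{\Psi_{21}}$ to recover $\tilde G\,\CC^r$ does not exhibit it as a single square Toeplitz kernel on $H^2(\CC^r)$, so Lemma~\ref{casm=r} still cannot be applied and the rigidity of $\tilde G^2$ remains unproved. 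A cleaner route --- the device the paper uses in the proof of the next lemma, Lemma~\ref{casgeneralr<m} --- is to pad: set $G_1=\frac{1}{\sqrt2}\bigl[(1+z)^{1/2}e_\ell\bigr]_{r<\ell\le m}$ in the $\GGG^\perp$ directions, form $\FF'=\FF\oplus G_1\CC^{m-r}$, check that $\FF'$ is a Toeplitz kernel with $\dim W'=m$, and apply Lemma~\ref{casm=r} to the square outer matrix $G'=[G,G_1]$; the block-diagonal form $\tilde G'=\mathrm{diag}(\tilde G,\tilde G_1)$ then forces $\tilde G^2$ to be rigid.
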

	
	\begin{proof}
	Let $\phi$ in $L^\infty(\TT,\CC^m\to \CC^m)$ be defined as in (\ref{eqphi}).
	
If $f\in \GG^\perp$, then $\Theta^* f\in H^2(\{0_{\CC^r}\}\oplus H^2(\CC^{m-r}))$ and $T_\phi f=f$ , so $\ker \ T_\phi|_{\GG^\perp}=\{ 0\}.$ 
	
	If $f\in \GG$, then $\Theta^*f\in 	 H^2( \CC^{ r}\oplus \{0_{\CC^{m-r}}\}).$
	
	 We denote by $p_r$ the orthogonal projection from $\CC^m$ to $\CC^r\oplus \{0_{\CC^{m-r}}\}$. Let $\phii= p_r \Theta^* \phi \Theta p_r $, then $T_\phii$ is a Toeplitz operator on $H^2(\CC^r)$.  Write $\tilde{\FF}= \tilde{G}\CC^r$. We  verify that $\tilde{\FF}=p_r(\Theta^*G\CC^r)=p_r \Theta\FF$ is   nearly $S^*$-invariant of dimension $r$ and that $\ker T_\phii=\tilde{\FF}$.  Then, we apply Lemma~\ref{casm=r} to  $\tilde{\FF}$ in $H^2(\CC^r)$, which implies that 
$\tilde{G}^2$ is rigid and $\tilde{\FF}=\ker T_{\bar{z} \tilde{G}^*\tilde{G}^{-1}}$.

To conclude,
\[
\phi=\Theta \begin{pmatrix} \bar{z} \tilde{G}^*\tilde{G}^{-1} & 0 \\ 0 & Id_{m-r} \end{pmatrix} \Theta^*
\]
satisfies $\ker \ T_\phi=\FF$ and $\tilde{G}^2$ is rigid.

	\end{proof}

\begin{lem}\label{casgeneralr<m} 
	Let $\FF=T_G K_U$ be the kernel of a  Toeplitz operator.  Then $\tilde{G}^2\in H^1(\CC^r\to\CC^r)$ is a  rigid function  and $\FF$ is the kernel of the Toeplitz operator with symbol
	\[
	 \phi:=\Theta \begin{pmatrix}   \tilde{G}^*U^* \tilde{G}^{-1} & 0 \\ 0 & Id_{m-r} \end{pmatrix} \Theta^*.
	\]
	\end{lem}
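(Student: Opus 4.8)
The plan is to reduce the statement to the square case $r=m$, which has already been settled in Lemma~\ref{casgeneralr=m}, by exploiting the block structure furnished by the \emph{constant} unitary $\Theta=\mathrm{diag}(\Theta_0,\Theta_1)$. Since $\Theta$ is constant it commutes with the Riesz projection $p_+$, so that $T_\phi=\Theta\,T_{\phi_0}\,\Theta^*$ with
\[
\phi_0=\begin{pmatrix}\tilde G^* U^*\tilde G^{-1} & 0\\ 0 & Id_{m-r}\end{pmatrix},
\qquad
T_{\phi_0}=\begin{pmatrix}T_{\tilde G^* U^*\tilde G^{-1}} & 0\\ 0 & Id\end{pmatrix}
\]
acting block-diagonally on $H^2(\CC^r)\oplus H^2(\CC^{m-r})$. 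Because the lower block is injective, $\ker T_{\phi_0}=\ker T_{\tilde G^* U^*\tilde G^{-1}}\oplus\{0\}$, whence $\ker T_\phi=\Theta_0\,\ker T_{\tilde G^* U^*\tilde G^{-1}}$. On the other hand $\FF=G K_U=\Theta_0\tilde G K_U=\Theta_0\tilde\FF$ with $\tilde\FF:=\tilde G K_U$. As $\Theta_0$ is injective on $H^2(\CC^r)$, the desired identity $\FF=\ker T_\phi$ is \emph{equivalent} to the square identity $\tilde\FF=\ker T_{\tilde G^* U^*\tilde G^{-1}}$ in $H^2(\CC^r)$, and it suffices to prove the latter.

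By Lemma~\ref{casgeneralr=m} applied to the outer square function $\tilde G$ and the inner $U$ (recall $U(0)=0$), the identity $\tilde\FF=\ker T_{\tilde G^* U^*\tilde G^{-1}}$ holds \emph{as soon as} $\tilde\FF$ is known to be the kernel of some (square) Toeplitz operator on $H^2(\CC^r)$. So the entire matter reduces to transporting the hypothesis ``$\FF$ is a Toeplitz kernel'' from $H^2(\CC^m)$ down to $H^2(\CC^r)$. Write $\FF=\ker T_\psi$. Since $\FF\subset\GG=H^2(\GGG)$ and $\Theta_0$ carries $H^2(\CC^r)$ isometrically onto $\GG$, one has $\Theta_0 f_1\in\FF$ if and only if $p_+(\psi\,\Theta_0 f_1)=0$; as $\Theta^*$ is a constant invertible matrix commuting with $p_+$, this is exactly the condition $T_{\Theta^*\psi\Theta_0}f_1=0$. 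Thus $\tilde\FF=\ker T_{\Theta^*\psi\Theta_0}$, where $\Theta^*\psi\Theta_0\in L^\infty(\CC^r\to\CC^m)$ is a \emph{rectangular} (tall) symbol.

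The main obstacle is precisely this last point: Lemma~\ref{casgeneralr=m} is stated for square symbols, whereas the compression presents $\tilde\FF$ only as the kernel of a tall Toeplitz operator, equivalently as the intersection $\ker T_{\Theta_0^*\psi\Theta_0}\cap\ker T_{\Theta_1^*\psi\Theta_0}$ of one square and one rectangular Toeplitz kernel. I would remove the $\GGG^\perp$-leakage exactly as in the proof of Lemma~\ref{casr<m}: after the standard normalization of $\psi$ to a symbol taking norm-one values, the relation $\psi G=\bar z H^*$ coming from $G\CC^r\subset\FF$ descends to $\Theta_0^*\psi\Theta_0\,\tilde G=\bar z (H\Theta_0)^*$, which realizes $\tilde\FF$ as a genuine \emph{square} Toeplitz kernel on $H^2(\CC^r)$. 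Granting this, Lemma~\ref{casgeneralr=m} yields $\tilde\FF=\ker T_{\tilde G^* U^*\tilde G^{-1}}$, and the equivalence of the first paragraph gives $\FF=\ker T_\phi$. Finally, rigidity of $\tilde G^2$ is obtained in the same reduced space $H^2(\CC^r)$ by the argument of Lemma~\ref{casr<m}: since $U(0)=0$ forces $\CC^r\subset K_U$, one applies Lemma~\ref{casm=r} (equivalently Proposition~\ref{injectif}) to conclude that $T_{\bar z\,\tilde G^*\tilde G^{-1}}$ has kernel of dimension $r$, i.e.\ that $\tilde G^2$ is rigid.
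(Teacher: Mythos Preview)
Your reduction is on the right track, but the step where you ``remove the $\GGG^\perp$-leakage'' contains a genuine gap. From the tall-symbol identity $\tilde\FF=\ker T_{\Theta^*\psi\Theta_0}$ you correctly deduce $\tilde\FF=\ker T_{\Theta_0^*\psi\Theta_0}\cap\ker T_{\Theta_1^*\psi\Theta_0}$, hence $\tilde\FF\subset\ker T_{\Theta_0^*\psi\Theta_0}$. But the relation $\Theta_0^*\psi\Theta_0\,\tilde G=\bar z(H\Theta_0)^*$ that you derive says only that the columns of $\tilde G$ lie in $\ker T_{\Theta_0^*\psi\Theta_0}$, i.e.\ it repeats the inclusion $\tilde G\,\CC^r\subset\tilde\FF\subset\ker T_{\Theta_0^*\psi\Theta_0}$; it does \emph{not} give the reverse inclusion. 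Nothing in the proof of Lemma~\ref{casr<m} provides a mechanism for this either: that lemma starts from a specific block-diagonal symbol and checks both directions, it does not compress an arbitrary $\psi$. So you have not exhibited $\tilde\FF$ as a square Toeplitz kernel, and Lemma~\ref{casgeneralr=m} cannot be invoked. The rigidity argument at the end has the same defect: Lemma~\ref{casm=r} applies to the space $\tilde G\,\CC^r$ only once \emph{it} is known to be a Toeplitz kernel, which you have not established.

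The paper handles exactly this obstruction by an extension rather than a compression. One enlarges $\FF$ to $\FF'=\FF\oplus G_1\CC^{m-r}\subset H^2(\CC^m)$, where $G_1$ is built from $(1+z)^{1/2}$ on $\GGG^\perp$ so that $\tilde G_1^2$ is rigid; one checks directly that $\FF'$ is a Toeplitz kernel (with symbol $\psi\,p_\GG+\phi_1\,p_{\GG^\perp}$). Now $\FF'$ has $\dim W'=m$, so Lemma~\ref{casgeneralr=m} applies in $H^2(\CC^m)$ and produces the block-diagonal symbol $\Theta\,\mathrm{diag}(\tilde G_0^*U_0^*\tilde G_0^{-1},\,\bar z\tilde G_1^*\tilde G_1^{-1})\,\Theta^*$. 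The block structure then legitimately yields both $\FF=\ker T_\phi$ and the rigidity of $\tilde G_0^2$. In short, the paper goes \emph{up} to the square case; your attempt to go \emph{down} loses the equality of kernels.
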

	
	The proof uses the Lemmas \ref{casgeneralr=m} and \ref{casr<m}. The naive idea is to apply Lemma \ref{casgeneralr=m} to $\FF$, considered as a subspace of $\GG=H^2(\GGG)$. Then  we have $\dim \,W=r=\dim\, \GGG$. Unfortunately, the range of $T_\phi|_{\GG}$ is \textit{a priori} not in $\GG$.  We need to find a new  symbol $\phi'$ such that $T_{\phi'}|_\GG\to \GG$ and $T_{\phi'}|_{\GG^\perp}\to \GG^\perp$  with $\FF=\ker T_{\phi '}|_\GG$.
	
	\begin{proof}

 We begin by building a nearly $S^*$-invariant subspace $\FF'$ which will be the kernel of a certain  Toeplitz operator with symbol $\phi'$, such that $W'$ is of dimension $m$. It will be more convenient to write  $\FF=T_{G_0} K_{U_0}=\ker T_{\phi_0}.$ 

 By hypothesis,  $G_0\in H^2(\CC^r)$ is outer and its columns form an orthonormal basis of $W$. Let $(e_{r+k})_{k=1\dots m-r}$ be an orthonormal basis of $\GGG^\perp$. Because $\GG_0=H^2(\GGG)$, $G_1:=\frac{1}{\sqrt{2}}[(1+z)^{1/2} e_\ell]_{r+1\leq\ell\leq m}$ is an outer matrix and $\tilde{G}_1=\Theta_1^* G_1$ is square. This choice of $\frac{1}{\sqrt{2}}(1+z)$, which is a rigid scalar function, implies that  $\tilde{G}_1^2$ is rigid in $H^1(\CC^{m-r}\to \CC^{m-r})$.  Because  $\tilde{G}$, the  diagonal matrix with blocks $\tilde{G}_0,\tilde{G}_1$ is outer, $G:=[G_0,G_1]\in H^2(\CC^m\to \CC^m)$ is outer.

	Let $U_1:=zId_{m-r}$. It is inner and  $U_1(0)=0$ and $U'$, the matrix with  diagonal blocks $U_0$ and  $U_1$, is of rank $m$.  Finally, we can consider $\FF'=T_{G'} K_{U'}=T_{G_0}K_{U_0}\oplus^\perp T_{G_1}K_{U_1}$. But $T_{G_0}K_{U_0} \subset \GG$ and $T_{G_1}K_{U_1}\subset \GG^\perp$, so  $W'=W_0\oplus W_1$. We remark that $W_1=G_1 \CC^{m-r}$ and $\dim\, W'=m$.

	It remains to show that $\FF'$ is the kernel of a  Toeplitz operator with symbol $\phii$.
	First of all, because $\tilde{G}_1^2$ is rigid, Lemma \ref{casr<m} applied to $T_{G_1}K_{U_1}$ asserts that this space is the kernel of the  Toeplitz with symbol $\phi_1:=\Theta \begin{pmatrix}Id_{m-r} & 0 \\ 0 & \bar{z} \tilde{G}_1^*  \tilde{G}^{-1} \end{pmatrix} \Theta^*$. By hypothesis, $\FF$ is the kernel of $T_{\phii_0}$.
	Let $\phii':=\phii_0p_{\GG}+\phi_1 p_{\GG\perp}$, where $p_{\GG}$ is the matrix $\Theta \begin{pmatrix} Id_r & 0 \\ 0 & 0 \end{pmatrix} \Theta^*$ of projection from $H^2(\CC^m)$ to $\GG=H^2(\GGG)$.  It is clear that $\phii'\in L^\infty(\TT,\CC^m\to \CC^m)$ and that $\FF'=\ker \, T_{\phii'}$.

	Now, we can apply Lemma \ref{casgeneralr=m} to $\FF'$, the kernel of $T_{\phii'}$. So, $\tilde{G}'^2$ must be rigid. But it is a  block diagonal matrix, so $\tilde{G}_0^2$ must be rigid. Moreover, $\FF'$ is the kernel of $T_\phi'$ with
$\phi':=G^* U^* G^{-1}=\Theta \tilde{G}'^* U'^* \tilde{G}'^{-1} \Theta^*$. Finally, the symbol $\phi'$ is given by
	\[
\phi'	=\Theta \begin{pmatrix}
	\tilde{G}_0^* U_0^* \tilde{G}_0^{-1} & 0 \\ 0 & \bar{z} \tilde{G}_1^*\tilde{G}_1^{-1} 
	\end{pmatrix} \Theta^*.
	\]
	The diagonal  structure of $\Theta^* \phi' \Theta$ implies that the range of $T_{\phi'}|_{\GG_0}$ is contained in  $\GG_0$.

	To conclude, we will show that $\FF=\ker \, T_{\phi_0}$ where the symbol $\phi_0$ is:
	\[
	\phi_0=\Theta \begin{pmatrix}   \tilde{G}_0^*U_0^* \tilde{G}_0^{-1} & 0 \\ 0 & Id_{m-r} \end{pmatrix} \Theta^*\in L^\infty(\TT,\CC^m\to \CC^m).
	\]
 Applying Lemma \ref{casgeneralr=m} in $\GG_0=H^2(\GGG_0)$ with $\dim \GGG_0=r=\dim W$, we have  $\FF$ is the kernel of  $T_{ p_{\GG_0}\phi' p_{\GG_0}}$, so it is the kernel of the Toeplitz operator with symbol $\tilde{G}_0^* U_0 \tilde{G}_0^{-1}$. It remains to complete the symbol  on $\GG_0^\perp$. Then $\FF$ is the kernel of the Toeplitz operator with symbol
  \begin{equation}\label{symbol}
 \phi:=\Theta \begin{pmatrix}   \tilde{G}_0^*U_0^* \tilde{G}_0^{-1} & 0 \\ 0 & Id_{m-r} \end{pmatrix} \Theta^*.
 \end{equation}
\end{proof}

Let  $\FF$ be a nearly $S^*$-invariant subspace of the form $T_G K_U$ where $G$ is outer. 
We can summarize the section by saying that $\FF$ is the kernel of a Toeplitz operator if and only if $\tilde{G}^2$ is rigid. Furthermore, the formula \ref{symbol} express explicitely a symbol depending only on $G$ and $U$.

\section{A matricial version of Hayashi's theorem}\label{matricialhayashi}

The purpose of this section is to obtain a matricial version of Hayashi's theorem (\cite{hayashi}). This section treats mainly  the  case $r=m$.  We need some results about de Branges--Rovnyak spaces to state the theorem.

 The first chapter of Sarason's book \cite{sarason:book} is general enough for the matricial case. Sarason specializes  to the scalar case in the next chapters. The ideas come from the third and  fourth chapters. In this section, we adapt some of them to the matricial case. The main problem is the lack of commutativity. 

In section~\ref{matricialsarason}, we built from $G$ an outer function $B$ such that $\HH(B)$ is unitarily equivalent to $H^2(\CC^m,\mu_G)$. We saw that $B$ is in the unit ball of $H^\infty(\CC^m\to\CC^m)$ and a little investigation shows that $\log(I-B^*B)\in L^1(\TT,\CC^m\to \CC^m)$.  (Let $A$ be the outer function $2G(F+Id)^{-1}$, then for all $z\in\DD,$  $A^*(z)A(z)+B^*(z)B(z)=Id$, and the conclusion follows.)

 In this section, we need to reverse the construction, beginning with a $B$ in the unit ball of $H^\infty(\CC^m\to\CC^m)$ verifying $\log(I-B(e^{it})^*B(e^{it}))\in L^1(\TT,\CC^m\to \CC^m)$. (Such functions are non-extreme points of the unit ball of $H^\infty(\CC^m\to\CC^m)$, see Treil's theorem in \cite{Nik1}  page 85). 
 
 We define $A\in  \NNN^+(\CC^m\to\CC^m)$ to be the Poisson integral 
  \[ A(z):=\exp \left( \frac{1}{2\pi}\int_0^{2\pi} \dfrac{e^{it}+z}{e^{it}-z}\log(I-B(e^{it})^*B(e^{it}))\ dt \right).\]
By definition, $A$ is outer and $ A(e^{it})^*A(e^{it})+B(e^{it})^*B(e^{it})=Id\ \ a.e.\ on\ \TT$. Because $B$ is bounded, so is $A$.

The same calculation as the proof of Lemma~\ref{lem5.2} gives:
\[
\begin{aligned}
\Re \left((Id+B)(Id-B)^{-1}\right)&=\dfrac{1}{2}\left[(Id+B^*)(Id-B^*)^{-1}+ (Id+B)(Id-B)^{-1}\right]\\
&=(Id-B^*)^{-1}\left[Id-B^*B \right](Id-B)^{-1}\\
&=(Id-B^*)^{-1}A^*A(Id-B)^{-1}=R_{(Id-B)^{-1}A}^2.
\end{aligned}
\]
So, for $ u\in \CC^m,$  we have \[ \< \Re\left((Id+B)(Id-B)^{-1}\right)u,u\>_{\CC^m}=\|A(Id-B)^{-1} u\|_{\CC^m}^2 \]
 which is positive, so we can define the Herglotz integral corresponding to $B$ analogous to $F$ defined section~\ref{matricialsarason}.

 Let $\mu\in \MM(\TT)$ be the positive-definite-valued measure defined by
\[
\forall z\in \DD, \ \ F(z):=(Id+B(z))(Id-B(z))^{-1}=:iV+\int_{\TT} \dfrac{e^{i\theta}+z}{e^{i\theta}-z} d\mu(e^{i\theta}).
\]
Then, the Radon-Nikodym derivative of the absolutely continuous component of $\mu$ is $R_{(Id-B)^{-1}A}^2\in L^1(\TT,\CC^m\to \CC^m)$.

We define $G:=(Id-B)^{-1}A$. This function is outer, because its determinant is outer.  It is the quotient of two outer  functions in $H^\infty$, so $G\in \NNN^+$ (compute the inverse of $(Id-B)$ with the formula of the comatrix, whose coefficients are polynomials in the coefficients of $Id-B$, so are outer functions). The fact that $R_{(Id-B)^{-1}A}^2\in L^1(\TT,\CC^m\to \CC^m)$ implies that $G\in H^2(\CC^m\to \CC^m)$. If $\mu$ is absolutely continuous, then
\[
F(z):=(Id+B(z))(Id-B(z))^{-1}=iV+\int_{\TT}  \dfrac{e^{i\theta}+z}{e^{i\theta}-z} G^*(e^{i\theta}) G(e^{i\theta}) \dfrac{d\theta }{2\pi}.
\]
Finally, the hypothesis $B(0)=0$ implies $V=0$.  Let  $(e_k)_{1\leq k \leq m}$ be the canonical basis of $\CC^m$. 
The coefficients of $F(0)=Id=\int_\TT G^*(e^{i\theta}) G(e^{i\theta}) \dfrac{d\theta }{2\pi}$ are  the inner products $\< Ge_n,Ge_m\>_{H^2(\CC^m)}$, so  $G$ is a matrix which columns form an orthogonal basis and $G$ is of unit norm in $H^2(\CC^m)$.

In this context, we say that $(B,A)$ is a \textit{ corona pair}, or a \textit{pair}. When $\mu$ is absolutely continuous,   $(B,A)$ is said to be a \textit{ special pair}. Then, the measure $\mu_G$  defined in section \ref{matricialsarason} is equal to $\mu$ and $Gf\in H^2(\CC^m)$ is equivalent to $f\in H^2(\CC^m,\mu)$. 

 Recall that $G:=(Id-B)^{-1}A$ is outer, so thanks to Lemma \ref{lem1}-2,  the operator $T_{Id-B} T_{G^*}:H^2(\CC^m) \to \HH(B)$ is an isometry. To be a special pair means that for all $f\in H^2(\CC^m,\mu),$ $T_{Id-B} T_{G^*} Gf=f,$ the operator   $T_{Id-B} T_{G^*}$ represents the division by $G$. 
So, we can reformulate   Theorem~\ref{key}:

\begin{cor}
Let $(B,A)$ be a pair. Then $T_{Id-B}T_{G^*}$ is an isomerty from $H^2(\CC^m)$ to $\HH(B)$. The pair is special if and only if $T_{Id-B}T_{G^*}$ is surjective.
\end{cor}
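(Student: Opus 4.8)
The plan is to prove the corollary in two parts, leveraging the machinery already established. For the first assertion, that $T_{Id-B}T_{G^*}$ is an isometry from $H^2(\CC^m)$ to $\HH(B)$, I would invoke Lemma~\ref{lem1}-2 directly: the function $G:=(Id-B)^{-1}A$ was just shown to be outer (its determinant being the quotient of the outer functions $\det(Id-B)^{-1}$ and $\det A$), and Lemma~\ref{lem1}-2 asserts precisely that for outer $G$ the operator $T_{Id-B}T_{G^*}$ is an isometry from $\GG$ onto $\HH(B)$. Since $G$ is outer we have $\GG=H^2(\GGG)=H^2(\CC^m)$, so this handles the first claim with essentially no extra work.

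For the equivalence characterizing special pairs, I would argue in both directions using the discussion immediately preceding the statement. Recall that $(B,A)$ being a special pair means $\mu$ is absolutely continuous, and in that case $\mu_G=\mu$ so that $Gf\in H^2(\CC^m)$ is equivalent to $f\in H^2(\CC^m,\mu)$, with $T_{Id-B}T_{G^*}$ acting as division by $G$; concretely $T_{Id-B}T_{G^*}(Gf)=f$ for every $f\in H^2(\CC^m,\mu)$. For the forward direction, if the pair is special then every element of $H^2(\CC^m,\mu)$ is realized as $Gf\in H^2(\CC^m)$, and applying the division property shows that each such $f$ lies in the range of $T_{Id-B}T_{G^*}$; combined with the isometry property and the reproducing-kernel computation of Lemma~\ref{lem5.2}, which shows the images $k^B_w(Id-B(w)^*)^{-1}u$ span $\HH(B)$, one concludes surjectivity onto $\HH(B)$.

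For the converse, I would suppose $T_{Id-B}T_{G^*}$ is surjective and deduce absolute continuity of $\mu$. The cleanest route is via Theorem~\ref{key}, which gives $(T_{Id-B}T_{G^*})(T_{Id-B}T_{G^*})^*=Id-T_BT_{B^*}$ as operators on $H^2(\CC^m)$; since $T_{Id-B}T_{G^*}$ is an isometry, surjectivity makes it a unitary between $H^2(\CC^m)$ and $\HH(B)$, which forces the range to be all of $\HH(B)$ and pins down the Gram structure so that the Herglotz measure $\mu$ associated with $F=(Id+B)(Id-B)^{-1}$ has no singular part. Concretely, the singular part of $\mu$ would obstruct the identity $Gf\in H^2$ iff $f\in H^2(\CC^m,\mu)$ from covering all of $\HH(B)$, so surjectivity rules it out. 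The main obstacle I anticipate is the noncommutativity of the matrices, which is exactly the recurring theme of the paper: care is needed in manipulating $(Id-B)^{-1}$, $A$, and $B^*$ together, and in ensuring that the identification of $\mu_G$ with $\mu$ and the density of the span of kernel functions survive without the scalar simplifications. I would therefore keep the argument at the level of the operator identity from Theorem~\ref{key} and the Douglas-criterion framework of Lemma~\ref{douglas}, rather than attempting a pointwise computation that would require commuting these factors.
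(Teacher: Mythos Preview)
Your approach is the paper's: cite Lemma~\ref{lem1}-2 for the isometry and read the equivalence from the ``division by $G$'' interpretation together with Theorem~\ref{key}. The paper gives no formal proof beyond the sentence preceding the Corollary and the phrase ``we can reformulate Theorem~\ref{key}''.

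There is, however, one point where your wording exposes a tension the paper glosses over. Lemma~\ref{lem1}-2 is proved in Section~\ref{matricialsarason}, where $B$ is \emph{built from} $G$ as $(F_G+Id)^{-1}(F_G-Id)$ with $F_G$ the Herglotz transform of $G^*G\,d\theta$; in that setting the conclusion is indeed ``onto $\HH(B)$''. In the Corollary, $B$ is given first and $G:=(Id-B)^{-1}A$ is derived; the two constructions coincide exactly when the Herglotz measure $\mu$ of $(Id+B)(Id-B)^{-1}$ is absolutely continuous, that is, exactly when the pair is special (the paper states this just before the Corollary: ``the measure $\mu_G$ defined in section~\ref{matricialsarason} is equal to $\mu$''). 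If you take the ``onto'' conclusion at face value for the given $B$, every pair becomes special and the second assertion collapses; your own subsequent effort to \emph{prove} surjectivity in the forward direction would then be redundant, which should signal the circularity. The paper sidesteps this by quoting Lemma~\ref{lem1}-2 only for the word ``isometry'' and packaging the surjectivity as the reformulation itself: special means $\mu$ absolutely continuous, hence $\mu=\mu_G$, hence the Section~\ref{matricialsarason} machinery applies verbatim and yields ``onto''. You should either drop the ``onto'' when citing the lemma (as the paper does), or make explicit that Lemma~\ref{lem1}-2 applies to the pair $(G,B_G)$ with $B_G$ built from $G$, and that $B_G=B$ precisely in the special case.
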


The next Proposition is the analogue of the Proposition 6 in \cite{sarasontoeplitz}.
\begin{prop}[]\label{prop6}\mbox{}\\ \vspace{-.5cm}
\begin{enumerate}
	\item Let $(B,A)$ be a pair. Then, $	AH^2(\CC^m)\subset \HH(B)$.
	\item If the pair $(B,A)$ is special and $G=(Id-B)^{-1}A$, then $	AH^2(\CC^m)$ is dense in $\HH(B)$ if and only if $G^2$ is rigid.
	\item If $AH^2(\CC^m)$ is dense in $\HH(B)$ (relatively to $\|\cdot\|_{\HH(B)}$), then $(B,A)$ is special.
\end{enumerate}
\end{prop}
Consequently, if $AH^2(\CC^m)$ is dense in $\HH(B)$, then $G^2$ is rigid.   

\begin{proof}
\begin{enumerate}
	\item Using the property of Toeplitz operators, we obtain easily:
	\[
	\forall f\in H^2(\CC^m,\mu), \ T_{Id-B}T_{G^*}T_{G^{*-1}G}f=T_{Id-B} Gf=(Id-B)Gf=Af.
	\]
	So, $T_{Id-B}T_{G^*}$ sends the range of $T_{G^{*-1}G}$ in $AH^2(\CC^m)$. But, if $(B,A)$ is  special pair, then the range of $T_{Id-B}T_{G^*}$ is $\HH(B)$. So, $AH^2(\CC^m)$ is contained in all $\HH(B)$.
	\item If $(B,A)$ is special, then $T_{Id-B}T_{G^*}$ is a surjective isometry from $H^2(\CC^m)$ to $\HH(B)$.   Because $A$ is outer,  $AH^2(\CC^m)$   is dense in $H^2(\CC^m)$, but  $AH^2(\CC^m)$   is in $\HH(B)$ so $AH^2(\CC^m)$   is dense in $\HH(B)$.
	Because $T_{Id-B}T_{G^*}T_{G^{*-1}G}=T_A$, and $T_{Id-B}T_{G^*}$ is an isometry, $AH^2(\CC^m)$  is dense is equivalent  to the range of $T_{G^{*-1}G}$ is dense in $H^2(\CC^m)$. This means that the kernel of $T_{G^*G^{-1}}$ is reduced to zero. We conclude by   Proposition~\ref{injectif} that $G^2$ is a rigid function.
	\item We need to prove that the range of $T_{Id-B}T_{G^*}$ is $\HH(B)$. If $AH^2(\CC^m)$  is dense  in $\HH(B)$, then the range of $T_{Id-B}T_{G^*}$ is dense in $\HH(B)$. But $T_{Id-B}T_{G^*}$ is an isometry, so it is all of $\HH(B)$. We conclude that $(B,A)$ is a special pair.	
\end{enumerate}

\end{proof}

In the  scalar case, for a function $b \in H^\infty$ verifying $\log (1-|b|^2)\in L^1$, the polynomials are dense in $\HH(b)$ (\cite{sarason:book} II-4). We verify the   matricial analogue, namely that $\{pu,\ p\in Pol_+,\ u\in \CC^m\}$ is dense in $\HH(B)$. 
 
 Because $T_AT_{A^*} \leq T_{A^*}T_A$, and $T_{A^*}T_A=Id-T_{B^*}T_B$ and $T_BT_{B^*}\leq T_{B^*}T_B$, 
Douglas's Lemma~\ref{douglas} implies that  $\MM(A)\subset \MM(A^*)= \HH(B^*)\subset \HH(B)$.

Let $p$ be a polynomial  and $u$ be a vector in $\CC^m$. The range $T_{A^*}pu$ is of the form $qu$, where $q$ is a polynomial  with the same degree as $p$. Because $\{pu,\ p\in Pol_+,\ u\in \CC^m\}$ is dense in $H^2(\CC^m)$, so it is in $\MM(T_{A^*})=\HH(B^*)$. To complete the proof, we need to show that $\MM(A^*)$ is dense in $\HH(B)$. 

The link between $\HH(B)$ and $\HH(B^*)$ is a corollary of Douglas's Lemma (see \cite{sarason:book} I-8): 
$h$ is in $\HH(B)$ is equivalent to $T_{B^*}h\in \HH(B^*)$.

Because $A$ is outer, $\ker T_{A^*}=\{0\}$, so there exists an unique $h^+$ such that $T_{B^*}h=T_{A^*}h^+$. Moreover, the following  formula (see \cite{sarason:book} IV-1) holds in the matricial case:
\[
\forall h_1,h_2\in \HH(B), \ \ \<h_1,h_2\>_B=\<h_1,h_2\>_2+\<h_1^+,h_2^+\>_2.
\]
Let $h$ be in $\HH(B)$ orthogonal to $\MM(A^*)$ (for the inner product of $\HH(B)$). Then,  for all $n\geq 0,$ $\<h , T_{A^*}S^{*n}h\>_{\HH(B)}=0$, and by unicity, $(T_{A^*}S^{*n}h)^+=T_{A^*}S^{*n}h^+$. 

Now, we can show that $h$ is null. For all $n\geq 0$, we have
\[
\begin{aligned}
0&=\<h,T_{A^*}S^{*n}h\>_{\HH(B)}=\< h,T_{A^*}S^{*n}h\>_2+\< h^+,T_{A^*}S^{*n}h^+\>_2\\
&=\dfrac{1}{2\pi}\int_\TT \left(\<A(e^{it}) h(e^{it}),h(e^{it})\>_{\CC^m}+\<A(e^{it}) h^+(e^{it}),h^+(e^{it})\>_{\CC^m}\right) e^{int}\ dt.
\end{aligned}
\]
The scalar function $\phi:e^{it}\mapsto \<A(e^{it}) h(e^{it}),h(e^{it})\>_{\CC^m}+\<A(e^{it}) h^+(e^{it}),h^+(e^{it})\>_{\CC^m}$ lies in $H_0^1$. The same calculation with $\<T_{A^*}S^{*n}h,h\>_{\HH(B)}=0$ implies that $\overline{\phi}\in H^1_0$, so $\phi$ is constant equal to zero, and we obtain the density of $\{pu,\ p\in Pol_+,\ u\in \CC^m\}$ in $\HH(B)$. 

\begin{cor}
Let $(B,A)$ be a special pair. If $G^2$ is rigid, and $U\in H^\infty(\CC^m\to\CC^m)$ is an inner function of rank $m$, then $(UB,A)$ is special and $((I-UB)^{-1}A)^2$ is rigid.
\end{cor}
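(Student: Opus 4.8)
The plan is to collapse the entire statement into a single density assertion and then feed it back through the results already established. First I would check that $(UB,A)$ is again a pair. Since $U$ is inner of rank $m$ it is unitary a.e.\ on $\TT$, so $(UB)^*(UB)=B^*U^*UB=B^*B$ a.e.; hence $\log(Id-(UB)^*UB)=\log(Id-B^*B)\in L^1(\TT)$, the identity $A^*A+(UB)^*(UB)=Id$ persists with the \emph{same} outer function $A$, and $(UB)(0)=0$. Thus $(UB,A)$ is a pair with associated outer function $G':=(Id-UB)^{-1}A$. Next, combining the three parts of Proposition~\ref{prop6}, for any pair $(C,A)$ one has that $AH^2(\CC^m)$ is dense in $\HH(C)$ if and only if $(C,A)$ is special and $((Id-C)^{-1}A)^2$ is rigid: part (3) gives ``dense $\Rightarrow$ special'', and under specialness part (2) gives ``dense $\Leftrightarrow$ rigid''. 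Applied to $C=B$ this rewrites the hypotheses as ``$AH^2(\CC^m)$ is dense in $\HH(B)$'', and applied to $C=UB$ it rewrites the two conclusions as ``$AH^2(\CC^m)$ is dense in $\HH(UB)$''. So the corollary is exactly the implication: density of $AH^2(\CC^m)$ in $\HH(B)$ forces density in $\HH(UB)$.

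The engine is the orthogonal decomposition $\HH(UB)=K_U\oplus U\HH(B)$. To obtain it I would compute, using $T_{UB}=T_UT_B$ and $T_{(UB)^*}=T_{B^*}T_{U^*}$ (both valid because $B\in H^\infty$), that $Id-T_{UB}T_{(UB)^*}=Id-T_UT_BT_{B^*}T_{U^*}$. Splitting $H^2(\CC^m)=K_U\oplus UH^2(\CC^m)$ and using $T_{U^*}|_{K_U}=0$, $T_{U^*}T_U=Id$, this operator is block diagonal: it is the identity on $K_U$ and $U(Id-T_BT_{B^*})U^*$ on $UH^2(\CC^m)$. Its positive square root keeps this block form, so its range $\HH(UB)$ equals $K_U\oplus U\HH(B)$, the sum orthogonal in $\HH(UB)$, with the $H^2$-norm on $K_U$ and the $\HH(B)$-norm transported by the isometry $h\mapsto Uh$ on the second summand. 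A by-product I would record: because $K_U\subseteq H^2(\CC^m)$ and $U\HH(B)\subseteq UH^2(\CC^m)$, this $\HH(UB)$-orthogonal splitting coincides on the two summands with the ambient $H^2$-orthogonal splitting; in particular the $\HH(UB)$-orthogonal projection onto $K_U$ agrees with the $H^2$-projection $P_{K_U}$.

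With the decomposition in hand, density of $AH^2(\CC^m)$ in $\HH(UB)$ reduces to the single inclusion $U\HH(B)\subseteq\mathcal V$, where $\mathcal V$ is the closure of $AH^2(\CC^m)$ in $\HH(UB)$. Indeed, the $K_U$-part then comes for free: for $f\in H^2(\CC^m)$ one has $Af=P_{K_U}(Af)+U\,T_{U^*}(Af)$ with $U\,T_{U^*}(Af)\in U\HH(B)\subseteq\mathcal V$ and $Af\in\mathcal V$, so $P_{K_U}(Af)\in\mathcal V$; since $A$ is outer, $AH^2(\CC^m)$ is dense in $H^2(\CC^m)$, whence $P_{K_U}(AH^2(\CC^m))$ is dense in $K_U$ and $K_U\subseteq\mathcal V$. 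Moreover, as $h\mapsto Uh$ is an isometry of $\HH(B)$ onto $U\HH(B)$ and $AH^2(\CC^m)$ is dense in $\HH(B)$, the set $U\,AH^2(\CC^m)$ is dense in $U\HH(B)$; so the required inclusion is equivalent to $U\,AH^2(\CC^m)\subseteq\mathcal V$, i.e.\ to approximating each $UAf$ by elements of $AH^2(\CC^m)$ in the $\HH(UB)$-norm.

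The hard part is precisely this last step, $UAf\in\mathcal V$, and it is where the non-commutativity of $U$ and $A$ bites. Using the matricial analogue of Sarason's formula $\<x,y\>_{\HH(UB)}=\<x,y\>_2+\<x^+,y^+\>_2$ (with $T_{(UB)^*}x=T_{A^*}x^+$), one checks that the $+$-transform of $UAf$ in $\HH(UB)$ equals the $+$-transform of $Af$ in $\HH(B)$, and that choosing $g_n$ with $Ag_n\to UAf$ in $H^2$ (possible since $A$ is outer) forces $T_{A^*}\big((Ag_n-UAf)^+\big)\to0$. The obstruction is that $T_{A^*}$, though injective ($A$ outer), need not be bounded below, so $H^2$-approximation alone does not control the $+$-term of the $\HH(UB)$-norm. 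This is exactly the point at which the hypothesis---density of $AH^2(\CC^m)$ in $\HH(B)$ (equivalently, rigidity of $G^2$ via Proposition~\ref{injectif})---must be fed back in, together with the identities $T_{U^*}T_U=Id$ and $(UB)^*U=B^*$ a.e., to upgrade the $T_{A^*}$-image convergence to genuine $\HH(UB)$-convergence. Once $U\,AH^2(\CC^m)\subseteq\mathcal V$ is secured, the decomposition and the reduction above give density of $AH^2(\CC^m)$ in $\HH(UB)$, and the reformulation of the first paragraph delivers both conclusions: $(UB,A)$ is special and $((Id-UB)^{-1}A)^2$ is rigid.
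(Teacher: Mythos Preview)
Your reformulation of both hypothesis and conclusion as density statements for $AH^2(\CC^m)$ is exactly right, and the decomposition $\HH(UB)=K_U\oplus U\HH(B)$ is correct. But the argument does not close: the step you yourself flag as ``the hard part'', namely $UAf\in\mathcal V$, is never actually carried out. You observe that one can choose $g_n$ with $Ag_n\to UAf$ in $H^2$ and that this forces $T_{A^*}\big((Ag_n-UAf)^+\big)\to 0$, and then you say the rigidity hypothesis ``must be fed back in'' to upgrade this to $\HH(UB)$-convergence. That is a description of what remains to be done, not a proof. There is no mechanism given by which rigidity of $G^2$ (equivalently injectivity of $T_{G^*G^{-1}}$) controls the inverse of $T_{A^*}$ on this particular sequence, and the non-commutativity you mention is not actually dealt with. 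As written the proposal is incomplete at the decisive point.

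The paper sidesteps the whole difficulty. Instead of working through the decomposition $K_U\oplus U\HH(B)$, it uses two soft facts: first, Douglas's criterion applied to $T_{UB}T_{(UB)^*}=T_UT_BT_{B^*}T_{U^*}\le T_BT_{B^*}$ gives a \emph{contractive} inclusion $\HH(B)\hookrightarrow\HH(UB)$; second, the polynomial vectors $\{pu:p\in\mathrm{Pol}_+,\,u\in\CC^m\}$ are dense in every $\HH(C)$ when $\log(Id-C^*C)\in L^1$, in particular in both $\HH(B)$ and $\HH(UB)$. Since the polynomials sit inside $\HH(B)$ and are $\HH(UB)$-dense, $\HH(B)$ is $\HH(UB)$-dense. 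Contractivity of the inclusion then carries $\HH(B)$-density of $AH^2(\CC^m)$ over to $\HH(UB)$-density, and your own reformulation via Proposition~\ref{prop6} finishes the job. No explicit approximation of $UAf$ by $Ag_n$ is needed at all.
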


A consequence of $((I-UB)^{-1}A)^2$  being rigid  is that $(I-UB)^{-1}A$ is outer. 

\begin{proof}

Because $U$ is inner and $(B,A)$ is a pair, $(BU,A)$ is a pair too. 
 
  Once again,  Douglas's Lemma~\ref{douglas} allows us to show that $\HH(B)\subset \HH(UB)$. We have $T_{UB}=T_UT_B\leq T_B$, so is
$Id-T_B T_{B^*}\leq I-T_{UB}T_{B^*U^*}$ and $\HH(B)\subset \HH(UB)$. 

The polynomials are dense in $\HH(B)$ and in $\HH(UB)$, so $\HH(B)$ is dense in $\HH(UB)$.
Thanks to  Proposition~\ref{prop6}, $G^2$ rigid and $(B,A)$  special implies that $AH^2(\CC^m)$ is dense in $\HH(B)$, so in $\HH(UB)$.  Proposition~\ref{prop6} again, tell us that  the pair $(UB,A)$ is a special. As a consequence, the corresponding function\\ $\left((I-UB)^{-1}A\right)^2$ is rigid.

\end{proof}

\begin{lem}\label{lem3}
Let $(B,A)$ be a pair and let $U\in H^\infty(\CC^m\to\CC^m)$ be an inner function of rank $m$ verifying $B=UB_0$. Let $A':=(I-B_0U)G$. Then $T_{Id-B} T_{G^*}$ maps the range of the operator $T_{G^{*-1}UG}$ onto $UA'H^2(\CC^m)$.
\end{lem}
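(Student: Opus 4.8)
The plan is to mimic the computation carried out in the proof of Proposition~\ref{prop6}(1), replacing the symbol $G^{*-1}G$ by $G^{*-1}UG$ and tracking the inner factor $U$ through the calculation. Concretely, I would fix $f$ in the natural domain (so that $Gf\in H^2(\CC^m)$, i.e.\ $f\in H^2(\CC^m,\mu)$), evaluate the composite $T_{Id-B}T_{G^*}T_{G^{*-1}UG}$ on $f$, and show that it equals $UA'f$. Once this identity is in hand, letting $f$ vary shows that the image of the range of $T_{G^{*-1}UG}$ under $T_{Id-B}T_{G^*}$ is exactly $UA'H^2(\CC^m)$.

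The first step is the Toeplitz composition rule for a co-analytic left factor. Since $G\in H^2(\CC^m\to\CC^m)$, the symbol $G^*$ has only non-positive Fourier modes, so for any symbol $\Psi$ and any $h$ in the relevant domain one has $p_+(G^*p_+(\Psi h))=p_+(G^*\Psi h)$: the difference $\Psi h-p_+(\Psi h)$ has only strictly negative modes, and its product with $G^*$ contributes nothing to $p_+$. Applying this with $\Psi=G^{*-1}UG$ and using $G^*G^{*-1}=Id$, I obtain
\[
T_{G^*}T_{G^{*-1}UG}f=p_+\!\left(G^*G^{*-1}UGf\right)=p_+(UGf)=UGf,
\]
the last equality because $U\in H^\infty(\CC^m\to\CC^m)$ and $Gf\in H^2(\CC^m)$ force $UGf\in H^2(\CC^m)$.

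It then remains to apply $T_{Id-B}$ and simplify with the factorization $B=UB_0$. Here $T_{Id-B}(UGf)=p_+\!\left((Id-B)UGf\right)$, and the purely algebraic identity
\[
(Id-B)UG=UG-UB_0UG=U(Id-B_0U)G=UA'
\]
(which uses only $BU=UB_0U$) shows that $(Id-B)UGf=UA'f\in H^2(\CC^m)$, whence $p_+(UA'f)=UA'f$. Combining the two steps yields $T_{Id-B}T_{G^*}T_{G^{*-1}UG}f=UA'f$, and letting $f$ range over the domain establishes that $T_{Id-B}T_{G^*}$ carries the range of $T_{G^{*-1}UG}$ onto $UA'H^2(\CC^m)$.

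The main obstacle I anticipate is bookkeeping with domains rather than any deep idea: the operators $T_G$, $T_{G^*}$ and $T_{G^{*-1}UG}$ are unbounded, so each identity above must first be verified on a dense class of $f$ (for instance those $f$ for which $Gf$ is bounded, or $f\in H^2(\CC^m,\mu)$), checking at every stage that the intermediate functions $Gf$, $UGf$ and $UA'f$ genuinely lie in $H^2(\CC^m)$, and then extended where needed. One should also confirm that $A'=(Id-B_0U)G$ indeed belongs to $H^2(\CC^m\to\CC^m)$—which follows since $B_0U\in H^\infty$ and $G\in H^2$—so that $UA'H^2(\CC^m)$ is a well-defined subspace and the final assertion is meaningful.
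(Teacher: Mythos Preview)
Your proof is correct and follows essentially the same route as the paper's: both arguments reduce to the algebraic identity $(Id-B)UG=(Id-UB_0)UG=U(Id-B_0U)G=UA'$, after which the composite $T_{Id-B}T_{G^*}T_{G^{*-1}UG}$ acts as multiplication by $UA'$. You simply make explicit the Toeplitz composition $T_{G^*}T_{G^{*-1}UG}f=UGf$ (which the paper leaves implicit, by analogy with Proposition~\ref{prop6}(1)) and add the domain bookkeeping that the paper omits.
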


\begin{proof}
By construction, it is clear that $A'$ is outer in  $H^\infty(\CC^m\to \CC^m)$.
Because $B=UB_0$, we obtain the equality \[(Id-B)UG=(Id-UB_0)UG=U(Id-B_0U)G=UA'.\] So the range of $T_{Id-B} T_{G^*} T_{G^{*-1}UG}$ is $UT_{A'}H^2(\CC^m)$.

\end{proof}

Now, we state the matricial analogue of Hayashi's theorem. 

\begin{thm}
Let $\FF=GK_U$ be a nearly $S^*$-invariant subspace of $H^2(\CC^m)$, where $G\in H^2(\CC^m\to\CC^m)$ is outer and $U\in H^\infty(\CC^m\to\CC^m)$ is inner, verifying $U(0)=0$ and $\rank U=m$. We write \[ A:=(Id-UB_0)G, \ \  A':=(Id-B_0U)G \ \mbox{ and } G_0':=(Id-B_0)A'.\]

Then, $\FF$ is the kernel of a Toeplitz operator if and only if $B=UB_0$, the pair $(B_0,A')$ is special  and $G_0'^2$ is rigid.

\end{thm}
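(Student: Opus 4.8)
The plan is to prove the equivalence by combining the description of Toeplitz kernels from Section~\ref{kernel} with the structural correspondence between pairs, special pairs and rigid functions developed through Proposition~\ref{prop6} and its corollaries. The central observation is that $\FF=GK_U$, with $G$ outer and $U$ inner of rank $m$ vanishing at zero, is governed by the function $B$ built from $G$ as in Section~\ref{matricialsarason}; by Theorem~\ref{theoprincipal} the isometric action of $T_G$ on $K_U$ forces $BH^2(\CC^m)\subset UH^2(\CC^m)$, which is exactly the factorization $B=UB_0$. Thus the condition $B=UB_0$ is essentially automatic from the nearly $S^*$-invariant structure, and the real content of the theorem is to show that ``$\FF$ is a Toeplitz kernel'' is equivalent to the conjunction ``$(B_0,A')$ special and $G_0'^2$ rigid.''

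For the forward direction I would assume $\FF=\ker T_\phii$ and invoke Lemma~\ref{casgeneralr=m} (the $r=m$ case) to conclude that $G^2$ is rigid and that $\FF=\ker T_{G^*U^*G^{-1}}$. The rigidity of $G^2$, together with Proposition~\ref{prop6}(2), gives that $AH^2(\CC^m)$ is dense in $\HH(B)$, where $A=(Id-UB_0)G$ is the outer complement produced from $B=UB_0$. The next step is to transfer this density across the inner factor $U$: using Lemma~\ref{lem3}, the operator $T_{Id-B}T_{G^*}$ maps the range of $T_{G^{*-1}UG}$ onto $UA'H^2(\CC^m)$, and I would exploit this to relate the density of $AH^2$ in $\HH(B)$ to the density of $A'H^2(\CC^m)$ in $\HH(B_0)$. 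Combined with the fact that the polynomials are dense in both $\HH(B)$ and $\HH(B_0)$, and that $\HH(B_0)\subset\HH(B)$ (via Douglas's criterion, Lemma~\ref{douglas}, since $T_B=T_U T_{B_0}\le T_{B_0}$), this density should upgrade $(B_0,A')$ to a special pair and, by Proposition~\ref{prop6}(2) applied at the level of $B_0$, force $G_0'^2=((Id-B_0)A')^2$ to be rigid.

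For the converse I would start from $(B_0,A')$ special and $G_0'^2$ rigid and run the construction in reverse. By Proposition~\ref{prop6}(2), $G_0'^2$ rigid and $(B_0,A')$ special give that $A'H^2(\CC^m)$ is dense in $\HH(B_0)$; the corollary to Proposition~\ref{prop6} (density of polynomials plus $\HH(B_0)\subset\HH(B)$) then propagates specialness and rigidity up to $(B,A)$ with $B=UB_0$, so that $G=(Id-B)^{-1}A$ is outer and $G^2$ is rigid. Once $G^2$ is known to be rigid, Lemma~\ref{casgeneralr=m} applies directly and yields that $\FF=GK_U$ is the kernel of the explicit Toeplitz operator $T_{G^*U^*G^{-1}}$, completing the equivalence. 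Throughout, one must keep track of the identity $A=(Id-UB_0)G$ versus $A'=(Id-B_0U)G$, which differ precisely because $U$ and $B_0$ do not commute.

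The hard part will be the passage between the ``upstairs'' data $(B,A)$ attached to $\FF$ and the ``downstairs'' data $(B_0,A')$ after dividing out $U$, since the noncommutativity of $U$ and $B_0$ means that $A$ and $A'$ are genuinely different functions and the clean scalar identities no longer hold. The technical crux is to show that density of $AH^2(\CC^m)$ in $\HH(B)$ is equivalent to density of $A'H^2(\CC^m)$ in $\HH(B_0)$; this is where Lemma~\ref{lem3} must be used carefully to track how $T_{Id-B}T_{G^*}$ intertwines the two ranges through the inner factor $U$, and where I expect the bulk of the delicate estimation to lie. The rigidity statements then follow formally from Proposition~\ref{injectif} and Proposition~\ref{prop6}, but verifying that the outer function $G_0'=(Id-B_0)A'$ is precisely the one whose square must be rigid requires matching it against the downstairs construction, which is the main bookkeeping obstacle.
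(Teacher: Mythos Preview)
Your proposal has two genuine gaps, both stemming from a misreading of Lemma~\ref{casgeneralr=m}.

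First, in the forward direction you invoke Lemma~\ref{casgeneralr=m} to conclude that $G^2$ is rigid. But that lemma says only that if $\FF=GK_U$ is a Toeplitz kernel then $\FF=\ker T_{G^*U^*G^{-1}}$; it makes no rigidity assertion for general $U$. (Rigidity of $G^2$ appears only in Lemma~\ref{casm=r}, the special case $U=zId_m$.) Without $G^2$ rigid you cannot feed into Proposition~\ref{prop6}(2), and in any case that proposition also requires $(B,A)$ to be special, which you have not established. So the chain ``$G^2$ rigid $\Rightarrow AH^2$ dense in $\HH(B)\Rightarrow A'H^2$ dense in $\HH(B_0)$'' is unsupported at its first two links.

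Second, in the converse you say ``once $G^2$ is known to be rigid, Lemma~\ref{casgeneralr=m} applies directly and yields that $\FF=GK_U$ is the kernel of $T_{G^*U^*G^{-1}}$''. This inverts the implication: the lemma assumes $\FF$ is already a Toeplitz kernel and identifies a canonical symbol; it does not show that $\ker T_{G^*U^*G^{-1}}$ equals $GK_U$ under any rigidity hypothesis. You would still need an independent argument that $\ker T_{G^*U^*G^{-1}}\subset GK_U$, and nothing in your outline provides one.

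The paper avoids the intermediate ``$G^2$ rigid'' entirely. Its proof uses Lemma~\ref{casgeneralr=m} only to obtain $H^2(\CC^m)=\FF\oplus\overline{T_{G^{*-1}UG}H^2(\CC^m)}$, and then matches this decomposition, via the isometry $T_{Id-B}T_{G^*}$, against the de Branges--Rovnyak decomposition $\HH(B)=K_U\oplus^{\perp_B}U\HH(B_0)$ (Sarason I-10). Lemma~\ref{lem3} shows the second summands correspond, so the forward direction reduces directly to ``$UA'H^2$ dense in $U\HH(B_0)$'', hence $A'H^2$ dense in $\HH(B_0)$, and Proposition~\ref{prop6} finishes. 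The converse runs the same diagram backwards. You never mention the decomposition $\HH(B)=K_U\oplus U\HH(B_0)$, and without it there is no mechanism to pass between $\FF$ and $K_U$ on one side and $\HH(B_0)$ on the other.
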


\begin{proof}
Let  $\FF=GK_U$ be the kernel of a Toeplitz operator. Then, Theorem~\ref{theoprincipal} implies that $B=UB_0$, and   Lemma~\ref{casgeneralr=m}  gives that $\FF=\ker T_{G^*U^*G^{-1}}$ and so 
\[
H^2(\CC^m)=\FF\oplus^\perp \overline{T_{G^{*-1}U G} H^2(\CC^m)}.
\]

The section I-10 of \cite{sarason:book}, valid in the matricial case,  gives the following orthogonal decomposition:
\[
\HH(B)=K_U \oplus^{\perp_B} U\HH(B_0).
\]
The operator $T_{Id-B}T_{G^*}$ is an isometry from $H^2(\CC^m)$ to $\HH(B)$. Lemma~\ref{lem3} tells us that it maps the range of $T_{G^{*-1}UG}$ onto $UT_{A'}H^2(\CC^m)$. Moreover, it maps $\FF$ on $K_U$. 
So, we get the following diagram:
\[
\begin{array}{ccccl}
&H^2(\CC^m)=&\FF&\oplus^\perp&\overline{T_{G^{*-1}U G } H^2(\CC^m)}\\
T_{Id-B}T_{G^*} & \downarrow  &\downarrow & & \downarrow \\
&\HH(B)=&K_U&\oplus^{\perp_{\HH(B)}}&U\HH(B_0).\\
\end{array}
\]
It follows that $UA'H^2(\CC^m)$ is dense in $U\HH(B_0)$. On $\HH(B)$, $T_U$ is an isometry, so $A'H^2(\CC^m)$  is dense in  $\HH(B_0)$. We conclude this implication  using  Proposition~\ref{prop6}. The fact that  $A'H^2(\CC^m)$ is dense in  $\HH(B_0)$ implies that $(B_0,A')$ is special and $G_0'^2$ is rigid.

Conversely, we can reverse the reasoning. The pair $(B_0,A')$ is special  and $G_0'^2$ is rigid imply that $A'H^2(\CC^m)$  is dense in  $\HH(B_0)$ and so $UA'H^2(\CC^m)$ is dense in $U\HH(B_0)$. The diagram holds to be true, so 
\[
T_{Id-B}T_{G^*} \overline{ T_{G^{*-1}UG} H^2(\CC^m) }^{\perp}=T_{Id-B}T_{G^*}\left(U\HH(B_0)\right)^\perp=K_U.
\]
It follows that $\FF=GK_U=\overline{ T_{G^{*-1}UG} H^2(\CC^m)}^{\perp}$ is the kernel of the Toeplitz operator $T_{G^*U^*G^{-1}}$.

\end{proof}

As mentioned in Sarason's article \cite{sarasontoeplitz}, the proof  contains a recipe for constructing a non-trivial proper subspace $\FF\subset H^2(\CC^m)$ which is the kernel of a Toeplitz operator. We repeat the process.

We begin with the particular case $r=m$: Take an outer function $G_0'\in H^2(\CC^m\to \CC^m)$ such that $G_0'^2$ is rigid and with an inner function $U\in H^\infty(\CC^m\to \CC^m)$ vanishing at zero. The pair associated to $G_0'$ is $(B_0,A')$. Let $B=UB_0$, $G=(Id-B_0U)^{-1}A'$ and $A=(Id-UB_0)G$. Thanks to Proposition~\ref{prop6}, the pair $(B,A)$ is special and $G^2$ is rigid. Then $\FF=GK_U$ is a nearly $S^*$-invariant subspace which is the kernel of the Toeplitz operator with symbol  $G^*U^*G^{-1}$. With this construction, $\dim W$ is equal to $m$.

We can adapt the general case $r<m$ from the particular case $r=m$.  With the previous notation, $\FF=GK_U,$ with $G\in H^2(\CC^r\to \CC^m)$ outer, so $\GG=H^2(\GGG)$ where $\GGG$ is a subspace of $\CC^m$ of dimension $r$.   Working in $H^2(\GGG)$ allow us to apply the previous theorem and the unitary matrix $\Theta_0$ to come back in $H^2(\CC^m)$.

\bibliographystyle{alpha}
\bibliography{arXiv_kernelsToeplitz}
\end{document}